\theoremstyle{plain}
\newtheorem{thm}{Theorem}[section]
\newtheorem{theorem}[thm]{Theorem}
\newtheorem{lem}[thm]{Lemma}
\newtheorem{lemma}[thm]{Lemma}
\newtheorem{cor}[thm]{Corollary}
\newtheorem{corollary}[thm]{Corollary}
\newtheorem{proposition}[thm]{Proposition}
\newtheorem{conjecture}[thm]{Conjecture}
\newtheorem{notation}[thm]{Notation}
\newtheorem{thmx}{Theorem}
\theoremstyle{definition}
\newtheorem{df}[thm]{Definition}
\newtheorem{definition}[thm]{Definition}
\newtheorem{ex}[thm]{Example}
\newtheorem{rem}[thm]{Remark}
\newtheorem{remark}[thm]{Remark}
\newcommand{\R}{\mathbb{R}}
\newcommand{\Z}{\mathbb{Z}}
\newcommand{\B}{\ensuremath{\mathcal{B}}}
\newcommand{\D}{\ensuremath{\mathcal{D}}}
\newcommand{\A}{\ensuremath{\mathcal{A}}}
\newcommand{\Lat}{\ensuremath{\mathcal{L}}}
\newcommand{\C}{\ensuremath{\mathcal{C}}}
\newcommand{\precdot}{\prec\mathrel{\mkern-5mu}\mathrel{\cdot}}
\newcommand{\latDn}{\ensuremath{\mathcal{L}(\mathcal{D}_n)}}
\newcommand{\latBn}{\ensuremath{\mathcal{L}(\mathcal{B}_n)}}
\newcommand{\latDns}{\ensuremath{\mathcal{L}(\D_{n,s})}}
\newcommand{\latDnu}{\ensuremath{\mathcal{L}(\D_{n,u})}}
\newcommand{\edge}{\ensuremath{\mathcal{E}_{xy}}}
\newcommand{\indeg}{\text{indeg}}
\newcommand{\hf}{\text{hf}}
\renewcommand{\L}{\ensuremath{\mathcal{L}}}
\newcommand{\rk}{\text{rk}}
\newcommand{\Latt}{\text{Latt}}
\newcommand{\sm}{\hspace{-2pt} \shortminus \hspace{-3pt}}
\DeclareMathSymbol{\shortminus}{\mathbin}{AMSa}{"39}
\newcommand{\ra}[1]{\renewcommand{\arraystretch}{#1}}
\title{On gamma-vectors and Chow polynomials of restrictions of reflection arrangements}
\author[S. Degen]{Sebastian Degen}
    \address{$^{1}$\textnormal{Universit\"{a}t Bielefeld, Fakult\"{a}t f\"{u}r Mathematik, Bielefeld, Germany.}}
    \email{sdegen@math.uni-bielefeld.de}
\author[L. Henetmayr]{Lisa Henetmayr$^1$}
    \email{lisa.henetmayr@uni-bielefeld.de}
\author[M. Mišinová]{Magdaléna Mišinová$^2$}
    \address{$^2$\textnormal{University of Konstanz, Department of Mathematics and Statistics, Konstanz, Germany.}}
    \email{magdalena.misinova@gmail.com}
\author[P. Pielasa]{Paweł Pielasa$^3$}
    \address{$^2$\textnormal{University of Cambridge, Department of Pure Mathematics and Mathematical Statistics, Cambridge, UK.}}
    \email{pp554@cam.ac.uk}
\author[F. Rieg]{Florian Rieg$^4$}
    \address{$^4$\textnormal{Goethe-Universität, Institut für Mathematik, Frankfurt am Main, Germany.}}
    \email{rieg@math.uni-frankfurt.de}
\begin{document}

\begin{abstract}
    Simplicial arrangements are a special class of hyperplane arrangements, having the property that every chamber is a simplicial cone. It is known that the simpliciality property is preserved under taking restrictions. In this article we focus on the class of reflection arrangements and investigate two different polynomial invariants associated to them and their restrictions, the $h$-polynomial with its $\gamma$-vector and the Chow polynomial. We prove that all restrictions of reflection arrangements are $\gamma$-positive and give an explicit combinatorial formula of the Chow polynomial in type $B$. Furthermore we prove that for a special class of restrictions of arrangements of type $D$, called intermediate arrangements, both the $h$-polynomial as well as the Chow polynomial behave arithmetically, that is they interpolate linearly between the respective invariants for type $B$ and $D$.\\
    
    \noindent\textbf{Keywords}: simplicial arrangements, reflection arrangements, matroids, $\gamma$-positivity, Chow polynomials.
\end{abstract}
\maketitle

\section{Introduction}\label{sec: intro}

In the study of hyperplanes arrangements, the simplicial ones constitute a special class, having the property that every chamber is a simplicial cone. The most prominent examples, called reflection arrangements, arise from finite reflection groups, i.e., finite groups that are generated by reflections, see \cite{OT13}. Beyond reflection arrangements the simplicial ones are rare. In rank three, two infinite families and 95 sporadic examples of simplicial arrangements up to projective equivalence
are described in the Grünbaum--Cuntz catalog, see \cite{G09,C12,C22}, which is conjectured to be complete up to combinatorial isomorphism.

It is known that simpliciality is preserved under taking restrictions. An active field of research is concerned with investigating properties of restrictions of simplicial arrangements, see for instance \cite{MS22,C22}. For reflection arrangements, these restrictions are typically not reflection arrangements anymore. A full overview of the restrictions for all possible reflection arrangements and their relations is illustrated in \cite[Figure 9]{MS22}.

One special family arises from restrictions of reflection arrangements in type $D$. In this article we mainly focus on this family of arrangements often called \textit{intermediate arrangements}. 

We define these arrangements in $\R^n$ by means of their normal vectors
\[
\D_{n,s}:=\{e_i\pm e_j \colon 1\leq i< j\leq n\}\cup \{e_i \colon 1\leq i\leq s\},
\]
where $e_i$ is the $i$-th standard basis vector in $\R^n$ and $0\leq s\leq n$.
The name comes from the fact that $\D_{n,0}=\D_n$ and $\D_{n,n}=\B_n$ and so they can be viewed as interpolating between the reflection arrangements of type  $\D_n$ and $\B_n$, see \cite[Section 6.9]{OT13} for more details. Note that the arrangement $\D_{n,s}$ is denoted $\A_n^s$ in \cite{OT13}.

In this article we discuss two different polynomial invariants attached to the intermediate arrangements $\D_{n,s}$ and their respective properties. The first being the $h$-polynomial and its $\gamma$-vector and the second being the Chow-polynomial. We prove that both invariants form an arithmetic sequence with respect to $s$, meaning that the difference of the invariants associated to $\D_{n,s}$ and $\D_{n,s+1}$ is independent of $s$. In the proof, we show that such behavior can be seen more generally, in what we call union-additive invariants on coherent partition of a lattice. We give examples of other union-additive invariants. However, it is not clear what is the set of union-additive invariants and whether there are any other hyperplane arrangements satisfying the definition of a coherent partition.

\subsection{The $\gamma$-vector}

The first invariant is motivated by the fact that every simplicial arrangement $\A$ gives rise to a simplicial complex $\Delta_\A$, triangulating a sphere. Therefore one can define the $f$- and $h$-polynomial of the arrangement in the usual way:

Given the simplicial complex $\Delta_\A$ of dimension $d-1$, the $f$-vector $f(\A)=(f_{-1},f_0,\dots,f_{d-1})$ consists of the entries $f_k:=f_k(\A)$ recording the number of $k$-dimensional simplicies of $\A$, for $-1\leq k\leq d-1$. We define the \textit{$f$-polynomial} $f(t)\in\Z[t]$ as
\[
    f(t):=\sum_{i=0}^{d}f_{d-1-i} t^i
\]
and the \textit{$h$-polynomial}
\[
h(t):=f(t-1)=\sum_{i=0}^{d}h_i t^{i}
\]
whose coefficients form the \textit{$h$-vector} $h=(h_0,\dots,h_d)$. 

We known that in the case of a simplicial complex triangulating a sphere the $h$-vector is non-negative and symmetric, i.e.,  $h_i\geq 0$ and $h_i=h_{d-i}$ for all $i$. The latter are called the Dehn-Sommerville equations, see \cite{K64,G05}.
Therefore one can rewrite the $h$-polynomial in a different basis (\textit{$\gamma$-basis}) and obtain:
\[
h(t)=\sum_{i=0}^{\lfloor d/2\rfloor}\gamma_i \cdot t^i(1+t)^{d-2i}.
\]
The above coefficients $\gamma_i\in\Z$ form the so-called \textit{$\gamma$-vector} $\gamma=(\gamma_0,\dots,\gamma_{\lfloor d/2\rfloor})$ of the underlying simplicial complex $\Delta_\A$. We say that $\Delta_\A$ is $\gamma$-positive if $\gamma_i\geq 0$ for all $i$, see \cite{G05,A17} for more details.
The property of being $\gamma$-positive is widely studied, see \cite{A17} for an overview. The main motivation is drawn from the celebrated Gal's conjecture:

\begin{conjecture}\cite[Conjecture 2.1.7]{G05}
   If a simplicial complex is a flag sphere, i.e., the minimal non-faces are of size two and it triangulates a sphere, then it is $\gamma$-positive.
\end{conjecture}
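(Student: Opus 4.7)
The statement quoted is Gal's conjecture, a well-known open problem in topological combinatorics that has been proved only in rather restricted settings (e.g., low dimensions and various special classes of flag spheres). What follows should therefore be read as an outline of possible strategies rather than a feasible route to a complete proof.

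The first strategy I would attempt is via real-rootedness. A classical lemma (used by Gal himself) shows that if a polynomial with symmetric, nonnegative coefficients has only real roots, then its $\gamma$-vector is nonnegative. The plan would be to associate to each face $\sigma$ of the flag sphere $\Delta$ a polynomial $h_\sigma(t)$ such that the family is interlacing and the top polynomial equals the $h$-polynomial of $\Delta$; this is the approach that works for Coxeter complexes and for barycentric subdivisions of regular CW-spheres. The obstacle here is fundamental: real-rootedness fails for some flag spheres, so this argument cannot possibly establish the conjecture in full generality and one would have to retreat to proving only the weaker inequality $\gamma_i \geq 0$ directly.

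A second line of attack is combinatorial. One would try to exhibit an explicit statistic on the facets of $\Delta$ (or on some shelling order) whose joint distribution is encoded by $h(t)$ and such that the number of facets with a prescribed value of the statistic equals $\gamma_i$. Versions of this program succeed for nestohedra, for chordal flag complexes, and for Coxeter complexes. The plan would be to formulate such a statistic intrinsically from the flag condition, perhaps using the ``no empty triangle'' property to organize a local matching or to perform an edge-subdivision reduction (following Charney--Davis and later Nevo--Petersen). The principal obstacle is that no known combinatorial invariant of a general flag sphere is flexible enough to encode the $\gamma_i$'s across all dimensions simultaneously.

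A third strategy is inductive: reduce a general flag sphere to known cases by a sequence of moves (edge subdivisions, flips, or antiprism constructions) that preserve both flagness and $\gamma$-positivity. One would first need a structural theorem saying every flag sphere is connected to a ``base case'' (for instance a cross-polytope or a Coxeter complex) through such moves, then show that each move changes $\gamma$ by a nonnegative vector. The hardest step, and the reason Gal's conjecture has remained open for more than two decades, is precisely the existence of such a connectivity theorem; even the special case $\gamma_{\lfloor d/2\rfloor}\geq 0$, the Charney--Davis conjecture, is itself open in general and is the natural indicator of how deep any successful strategy would have to go.
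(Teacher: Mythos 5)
You correctly recognized that this is Gal's conjecture, which remains open, and that the paper itself offers no proof of it: the statement is cited as \cite[Conjecture 2.1.7]{G05} precisely because it is an unresolved conjecture, not a theorem with an argument. The paper uses it only as motivation and background; what the paper actually proves (Theorem~A) is the much more restricted statement that all restrictions of reflection arrangements are $\gamma$-positive, obtained by combining the known rank $\leq 5$ case (which follows from Gal's own verification of his conjecture through dimension $4$), an arithmeticity argument for the $\D_{n,s}$ family that sandwiches $\gamma_k(\D_{n,s})$ between $\gamma_k(\D_n)$ and $\gamma_k(\B_n)$, and direct computation for the four exceptional restrictions of $E_7$ and $E_8$. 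So the correct answer here was exactly what you gave: acknowledge the statement is open and cannot be proved, and sketch the landscape of known partial approaches. Your survey of the three standard strategies (real-rootedness and interlacing, explicit $\gamma$-statistics via shellings or matchings, and inductive reduction through flagness-preserving moves) is accurate, and you correctly flagged the known obstructions to each, including the failure of real-rootedness for general flag spheres and the openness of the Charney--Davis case $\gamma_{\lfloor d/2\rfloor}\geq 0$. No gap to report, since there was no proof to give.
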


Gal proved this conjecture for all flag spheres up to dimension 4, see\cite{G05}. It is known that simplicial arrangements yield flag spheres, so it follows that all simplicial arrangements of rank $\leq 5$ are $\gamma$-positive. Moreover, $\gamma$-positivity was proven for all reflection arrangements in \cite{S07}. 

One of our main results in this article confirms Gal's conjecture for all restrictions of reflection arrangements.
This is achieved by proving arithmeticity of the $\gamma$-coefficients for fixed $n$ and varying $0\leq s\leq n$ for the intermediate arrangements $\D_{n,s}$, together with \verb|SageMath|-computations in the exceptional cases.

\begin{thmx}
    \label{thm: intro_gam_pos}
    All restrictions of reflection arrangements are $\gamma$-positive.
\end{thmx}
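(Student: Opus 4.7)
The plan is to reduce \Cref{thm: intro_gam_pos} to the already established $\gamma$-positivity of reflection arrangements, plus a new arithmeticity statement for the intermediate family $\D_{n,s}$, plus a finite computer verification for the remaining exceptional cases.

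First I would appeal to the full classification of restrictions of reflection arrangements recorded in \cite[Figure 9]{MS22}. Since products of hyperplane arrangements correspond to joins of their associated simplicial spheres, the $h$-polynomial is multiplicative under products, and $\gamma$-positivity is preserved under such multiplication: the product of two polynomials written in the $\gamma$-basis has coefficients $\sum_{i+j=k}\gamma_i^{(1)}\gamma_j^{(2)}$, which are nonnegative whenever both factors are $\gamma$-positive. It therefore suffices to verify $\gamma$-positivity on each irreducible factor appearing in that list. Inspection of \cite[Figure 9]{MS22} shows that these factors fall into three groups: the reflection arrangements themselves, the intermediate arrangements $\D_{n,s}$ with $0 < s < n$, and a finite list of exceptional irreducible restrictions arising from the reflection arrangements of types $E_6,E_7,E_8,F_4,H_3,H_4$.

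For the reflection arrangements themselves, $\gamma$-positivity is the theorem of Stembridge \cite{S07}. For the finite list of exceptional irreducible restrictions, I would compute the $f$-, $h$-, and $\gamma$-vectors explicitly in \verb|SageMath| and verify nonnegativity entry by entry. The main new case is the infinite family $\{\D_{n,s} : 0 \leq s \leq n\}$, which I would handle via the arithmeticity statement announced in the introduction: for each fixed $n$ and each fixed index $i$, the value $\gamma_i(\D_{n,s})$ is an affine function of $s$ on $\{0,1,\dots,n\}$. Granted this, the endpoints $\D_{n,0} = \D_n$ and $\D_{n,n} = \B_n$ are $\gamma$-positive by \cite{S07}, and affine interpolation between nonnegative endpoint values remains nonnegative, so $\gamma_i(\D_{n,s}) \geq 0$ for every intermediate $s$.

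The main obstacle is the arithmeticity of $s \mapsto h(\D_{n,s})$, from which arithmeticity of each $\gamma_i$ follows via the linear change of basis between the $h$- and $\gamma$-bases. I would approach it at the level of the $f$-vector of $\Delta_{\D_{n,s}}$, stratifying its faces according to how many of the coordinate hyperplanes $\{x_i = 0\}$ with $i \leq s$ participate in a given face, and then showing, by a bijection uniform in $s$, that each stratum contributes a count that is either independent of $s$ or exactly linear in $s$. The delicate point is that the short roots $e_1,\dots,e_s$ must be added in a combinatorially symmetric manner so that the increment per added hyperplane is constant in $s$; this is where I expect the bulk of the technical work to lie, likely by giving an explicit description of the faces of $\Delta_{\D_{n,s}}$ in terms of signed permutations with a prescribed sign condition on the last $n-s$ coordinates and tracking this stratum by stratum.
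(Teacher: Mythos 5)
Your proposal is correct in outline and its mathematical core coincides with the paper's: prove that for fixed $n$ the $\gamma$-coefficients (equivalently the $f$- and $h$-vectors, since the basis changes are linear) of $\D_{n,s}$ are affine in $s$, deduce positivity by interpolating between the endpoints $\D_n$ and $\B_n$, which are $\gamma$-positive by \cite{S07}, and dispose of the remaining exceptional restrictions by explicit computation; your sketch of the arithmeticity step (adding the coordinate hyperplanes and showing by a bijection uniform in $s$ that the count of new faces does not depend on $s$) is exactly the mechanism of \Cref{faceBijection}, \Cref{faceTransfer1}, \Cref{faceTransfer2} and \Cref{gammaArithemtic}, and your convex-combination argument is a clean variant of \Cref{coro: Dns_gam_pos}. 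Where you genuinely diverge is in how the non-$\D_{n,s}$ cases are organized: the paper never reduces to irreducible factors via multiplicativity of $\gamma$-positivity under joins (a correct observation, but one it does not need), because it invokes Gal's theorem (\Cref{thm: all_simpl_rank5_arr_gam_pos}) to cover \emph{all} simplicial arrangements of rank at most $5$ at once, leaving only the four rank-$\geq 6$ exceptional restrictions $(46,1)$, $(63,1)$, $(68,1)$, $(91,1)$ of $E_7$ and $E_8$ for machine verification. Your route instead requires computing every exceptional irreducible restriction of $F_4$, $H_3$, $H_4$, $E_6$, $E_7$, $E_8$; this is finite and so acceptable in principle, but be aware that the paper found the rank-$7$ restriction $(91,1)$ of $E_8$ to be beyond the reach of the standard tope-graph computation of \Cref{lem: h_poly_from_tope_graph}, so that its $f$-vector had to be obtained by a specialized Weyl-groupoid computation (\Cref{prop: gam_pos_exceptional}); a blanket ``compute in \texttt{SageMath}'' is therefore optimistic for that case, though this is a practical rather than a mathematical gap. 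Finally, note that your arithmeticity step is only sketched; completing it would require precisely the face-transfer arguments the paper supplies.
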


As a byproduct of the proof of this theorem we then also obtain the exact increment of the arithmetic sequence in the cases of the intermediate arrangements, see \Cref{prop:explicit-increment}.

\subsection{The Chow polynomial}

The second invariant is motivated by an important geometric-algebraic object one can attach to a simple matroid $M$ on the groundset $E$, called the \textit{Chow ring} of $M$ and is defined as follows:

Let $\mathcal{L}(M)$ be the lattice of flats of $M$, then
\[
\text{CH}(M):=\frac{\mathbb{Q}[x_F\;|\;F\in\mathcal{L}(M)\setminus\{\emptyset\}]}{I+J},
\]
where $I,J$ are the two ideals
\begin{align*}
I &= \langle x_{F_1}x_{F_2}\;|\;F_1,F_2\in\mathcal{L}(M)\setminus\{\emptyset,E\}\text{ are incomparable}\rangle\\
J &= \Bigg\langle\sum_{F\ni i}x_F\;|\; i\in E\Bigg\rangle.
\end{align*}

Since their introduction in \cite{FY04}, they have attracted a lot of attention in the last years and are at the core of much current research in algebraic and geometric combinatorics, maybe most notably as the crucial tool in the resolution of the long-standing Heron–Rota–Welsh conjectures by Adiprasito–Huh–Katz in \cite{AHK18}. There, the authors showed that the Chow ring of a matroid satisfies the Kähler package, i.e., Poincaré duality, the Hodge-Riemann relations and the hard Lefschetz property. Thereby, they initiated the study of combinatorial Hodge theory.

The Hilbert-Poincaré series associated to $\text{CH}(M)$ is the polynomial defined as
\[
H_M(t)=\sum_{k\geq 0} \dim(\text{CH}(M)_k)t^k,
\]
where $\text{CH}(M)_k$ is the $k$-th graded piece of $\text{CH}(M)$. We call it the \textit{Chow polynomial} of $M$, see \cite{FMSV24}.

The Chow polynomial has been extensively studied in recent years. It is known to have palindromic coefficients due to Poincaré duality and conjectured to be real-rooted by June Huh and Matthew Stevens, see \cite[Conjecture 4.1.3 and 4.3.3]{S21}. In the uniform case this conjecture was recently confirmed by Brändén and Vecchi \cite{BV25}. 
Moreover Stump in \cite{S24} obtained a general combinatorial description of the Chow polynomial using an $R$-labeling for the lattice of flats of the matroid. He thereby reproved that the Chow polynomial has a $\gamma$-positive expansion, initially proven in \cite{FMSV24}. Furthermore he obtained a compact expression of this polynomial in the braid arrangement case (type $A$). Soon after Hoster gave an explicitly combinatorial formula, based on Schubert matroids, for the Chow polynomial of uniform matroids, see \cite{H24}.

For any sequence $(a_1,\ldots,a_n)\in\mathbb{Z}^n$, we denote by $\text{des}(a_1,\ldots,a_n)$ the number of descents of the sequence, i.e. the number of indices $i$ such that $a_i>a_{i+1}$.
In this article we provide an explicit combinatorial formula of the Chow polynomial for the reflection arrangement of type $B$, by using type $B$ analogues of the methods in \cite{S24}.

\begin{thmx}
\label{thm: intro_chow_poly_Bn}
The Chow polynomial for type $B$ has the expansion
\[
    H_{\B_n}(x)=\sum_{(a_1,\dots,a_n)}\left(\prod_{i=1}^n (2a_i-1)\right)x^{des(a_1,\dots,a_n)}(x+1)^{n-1-2des(a_1,\dots,a_n)}
\]
where the sum ranges over all tuples $(a_1,\dots,a_n)$ with $a_i\in \{1,\dots,n+1-i\}$ such that $a_1\leq a_2$ and also that $a_i>a_{i+1}$ implies $a_{i-1}\leq a_{i}$ for $i\in\{2,\dots,n-1\}$.
\end{thmx}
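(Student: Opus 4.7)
The plan is to follow the strategy of Stump~\cite{S24}, but adapted to type $B$. Stump's key result is that any $R$-labeling of the lattice of flats $\mathcal{L}(M)$ of a matroid $M$ yields an explicit $\gamma$-positive expansion of the Chow polynomial $H_M(x)$: maximal chains get grouped by the descent structure of their label sequences, and each descent pattern contributes a term of the form $x^{\mathrm{des}}(1+x)^{d-2\mathrm{des}}$, weighted by the number of chains realizing it. The first step is to recall this formula in the exact form we need for the rank-$n$ lattice $\mathcal{L}(\B_n)$, so that $d=n-1$.

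The second step is to construct a suitable $R$-labeling on $\mathcal{L}(\B_n)$. A flat of $\B_n$ is a signed partition of $\{\pm 1,\dots,\pm n\}$: a (possibly empty) zero block closed under negation, together with a collection of block pairs $\{B,-B\}$ with $B\ne -B$. A cover relation is therefore of one of three types: merging two disjoint signed pairs $\{B,-B\},\{B',-B'\}$ in the ``unsigned'' way (giving $\{B\cup B', -(B\cup B')\}$), merging them in the ``signed'' way (giving $\{B\cup(-B'), -B\cup B'\}$), or absorbing a pair $\{B,-B\}$ into the zero block. The $R$-labeling is defined analogously to Stump's type-$A$ labeling: at each cover we record a normalized index $a_i\in\{1,\dots,n+1-i\}$ pointing to the block involved, together with the type of move performed. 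The third step is then a weight-counting argument: for fixed $(a_1,\dots,a_n)$, the number of chains with this underlying ``position sequence'' is exactly $\prod_{i=1}^n(2a_i-1)$, because at step $i$ we have $a_i-1$ other signed pairs available, giving $2(a_i-1)$ merge choices (two signs) together with the single ``absorb'' option, for a total of $2a_i-1$.

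The fourth step is to identify the descent statistic: one checks that the descents of the chain in Stump's sense coincide with $\mathrm{des}(a_1,\dots,a_n)$, so the contribution of a tuple is $\prod_i(2a_i-1)\cdot x^{\mathrm{des}(a_1,\dots,a_n)}(1+x)^{n-1-2\mathrm{des}(a_1,\dots,a_n)}$. Summing over all admissible tuples produces the stated formula. The main obstacle I anticipate is verifying that the admissible tuples are exactly those satisfying the stated side conditions ($a_1\leq a_2$, and $a_i>a_{i+1}\Rightarrow a_{i-1}\leq a_i$). This condition is the type-$B$ reflection of the fact that chains in the lattice only realize ``sparse'' descent patterns compatible with the $\gamma$-basis; proving it requires a careful case analysis of how a descent in the label sequence propagates through the three types of cover moves, and ruling out configurations where an absorb-move would force two consecutive descents. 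Once this combinatorial characterization is nailed down, the rest of the argument is a direct application of Stump's machinery.
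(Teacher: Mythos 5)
Your overall strategy is the one the paper follows (apply \Cref{thm:chow_formula} to a labeling of the signed partition lattice $\Pi_n^B$, and get the factor $2a_i-1$ from ``$2(a_i-1)$ signed/unsigned merges plus one absorption into the zero block''), but as written there is a genuine gap in how the labeling and the tuple $(a_1,\dots,a_n)$ are set up. Stump's theorem requires an \emph{edge} labeling, i.e.\ a function of the cover relation alone, and then counts descents of the resulting label sequence; your proposed label (``a normalized index $a_i\in\{1,\dots,n+1-i\}$ pointing to the block involved, together with the type of move'') conflates the edge label with the chain statistic you want in the final formula. In the paper the two are kept separate: the edge label is $\lambda(\pi,\pi')=\max\{\min|B_1|,\min|B_2|\}$, a function of the cover only; one shows the label multiset along any maximal chain is $\{1,\dots,n\}$ (\Cref{setOfLabels}), so a chain determines a permutation $\sigma$, and the tuple $(a_1,\dots,a_n)$ is then the \emph{inversion sequence} of $\sigma$, not the label sequence itself. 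The count $\prod_i(2a_i-1)$ of chains with label order $\sigma$ (\Cref{numberOfChains}) and the fact that this labeling is an $R$-labeling (the increasing order $\sigma=\mathrm{id}$ gives $\prod(2\cdot 1-1)=1$ chain) both hinge on identifying $a_i$ with $|\{j\ge i:\sigma(j)\le\sigma(i)\}|$; your sketch asserts ``$a_i-1$ other pairs available'' without this identification, and without it the claim that descents of the chain equal $\mathrm{des}(a_1,\dots,a_n)$ has no precise content.

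Relatedly, you misplace the main difficulty. The side conditions ($a_1\le a_2$, and $a_i>a_{i+1}\Rightarrow a_{i-1}\le a_i$) do not need to be derived from a case analysis of the three cover moves: they are exactly the hypotheses of \Cref{thm:chow_formula} (no initial descent, no double descent in the label sequence), transported to tuples via the bijection $\sigma\mapsto\mathrm{Inv}(\sigma)$ (\Cref{inversionBijection}) together with the purely permutation-statistical fact that $\sigma$ and $\mathrm{Inv}(\sigma)$ have descents in the same positions (\Cref{inversionDescents}). The steps that do require lattice arguments are the ones your plan leaves unproved: well-definedness of the label set along chains, the $R$-labeling property, and the chain count $\prod_i(2a_i-1)$. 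Filling those in essentially reproduces the paper's proof.
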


Moreover we show that the Chow polynomials for the intermediate arrangements $\D_{n,s}$ behaves arithmetically with respect to fixed $n$ and varying $0\leq s\leq n$. This is done in two different ways, using on the one hand an EL-labeling of the intersection lattices as introduced in \cite{DNG19}, see \Cref{prop:chow_arithmeticity_EL}, and on the other hand a recursive approach via reduced characteristic polynomials of the minors of the underlying matroids, see \Cref{thm: chow_arithmeticity_char_polyn}.

The rest of the article is organized as follows: In \Cref{sec:prelims} we briefly recall the basic notions of hyperplane arrangements and matroids, needed for the discussion in the later sections. In \Cref{sec: arithmeticity-of-matroid-invariants} we discuss certain class of invariants with connection to the intermediate arrangements $\D_{n,s}$, showing that these invariants behave arithmetically on $\D_{n,s}$ with respect to fixed $n$ and varying $0\leq s\leq n$. In \Cref{sec:gamma_pos_part} we apply these results to $f$-polynomial and subsequently show the $\gamma$-positivity for arrangements $\D_{n,s}$.
Moreover, we explicitly determine the increment of the above mentioned arithmetic sequence. Afterwards, we discuss a computational approach via \verb|SageMath| to verify the $\gamma$-positivity of all rank $6$ and $7$ restrictions of the exceptional cases, which then enables us to conclude the proof of \Cref{thm: intro_gam_pos}. In \Cref{sec:chow_polyn_part} we introduce an $R$-labeling for the lattice of flats of matroids arising from reflection arrangements of type $B$. We then apply \Cref{thm:chow_formula} to derive the explicit formula of their Chow polynomials, as stated in \Cref{thm: intro_chow_poly_Bn}. Thereafter we discuss two approaches for proving the arithmetical behavior of the Chow polynomials associated to the matroids underlying the $\D_{n,s}$-arrangements. For the first approach we examine a recursive formula of their Chow polynomials in terms of reduced characteristic polynomials of minors and apply result of section \Cref{sec: arithmeticity-of-matroid-invariants} again and for the second approach we investigate an EL-labeling for their lattice of flats. At the end of the article (\Cref{appendixsec: chow_polyn_catalogue}) we provide a small catalogue of Chow polynomials of $\D_{n,s}$-matroids up to dimension $n=7$.

\section*{Acknowledgments}

This paper stemmed from a project within a Dive into Research, which grants research experience to undergraduates. We thank Michael Cuntz, Lukas Kühne, and Raman Sanyal, who organized the Dive into Research. We are also thankful for the funding for the Dive into Research from the DFG priority program SPP 2458 Combinatorial Synergies. We want to give special thanks to Michael Cuntz, for providing us essential gamma-vector computations and Lukas Kühne and Raman Sanyal, who posed the starting questions and helped a lot with proofreading and insightful discussions.
Sebastian Degen was supported by the Deutsche Forschungsgemeinschaft (DFG, German Research Foundation) -- SFB-TRR 358/1 2023 -- 491392403 and SPP 2458 -- 539866293. Florian Rieg was supported by the Spanish-German project COMPOTE (DFG 541393733) as well as by the DFG priority program \emph{Combinatorial Synergies} (SPP 2458) — 539866293.

\section{Preliminaries}\label{sec:prelims}

\subsection{Notation}

Throughout the article for a given positive integer $n$ we set $[n]:=\{1,\ldots,n\}$ and denote be $S_n$ the symmetric group on $n$ elements.

\subsection{Hyperplane arrangements}\label{subsec:arrangements}

In this subsection we review some basics theory of hyperplane arrangements, that we use frequently throughout the article. For a more detailed overview the reader may consult \cite{OT13}.

\begin{definition}
    A \textbf{hyperplane in $\R^n$} is a $(n-1)$-dimensional linear subspace $H$, which can be describe in the following way
    \[
        H:=\{(x_1,\ldots,x_n)\in\R^n\;|\;\alpha^tx=\alpha_1x_1+\ldots+\alpha_nx_n=0\},
    \]
    for some $\alpha:=(\alpha_1,\ldots,\alpha_n)\in\R^n\setminus\{0\}$. We call the vector $\alpha$ a \textbf{normal of $H$} and denote the hyperplane defined by $\alpha$ as $H_\alpha$.
\end{definition}

The closed and open positive half spaces corresponding to $H_\alpha$ are defined as
\[
    H^\geq_\alpha:=\{x\in\R^n\;|\;\alpha^t x\geq0\}\; \text{ and }\; H^>_\alpha:=\{x\in\R^n\;|\;\alpha^t x>0\}.  
\]
Analogously, we define the negative half spaces denoted by $H_\alpha^\leq$ and $H_\alpha^<$.

\begin{definition}
    A \textbf{hyperplane arrangement $\A$} in $\R^n$ is a finite non-empty collection of hyperplanes. We say \textit{arrangement} for short.
\end{definition}

Note that in this context, every hyperplane $H$ is linear, i.e. contains the origin. In other words, we only consider so called \textbf{central} hyperplane arrangements $\A$. Every arrangement comes with an associated lattice, encoding the intersections of the hyperplanes. 

\begin{definition}
    The \textbf{lattice of intersections} of an arrangement $\A=\{H_1,\ldots,H_m\}$ in $\R^n$ is the poset $(\Lat(\A), \preceq)$, where
    \[
        \Lat(\A):=\Bigg\{\bigcap_{i\in I}H_i\;|\;I\subseteq[m]\Bigg\}
    \]
    and $\leq$ denotes the reverse inclusion, i.e. for all $X,Y\in\Lat(\A)$ we have $X\preceq Y$ if $Y\subseteq X$. 
    In the following, we abbreviate this poset by $\Lat(\A)$.
\end{definition}

\begin{remark}
    For any arrangement $\A$, the poset $\Lat(\A)$ is a geometric lattice. The \textbf{rank} of an element $X\in\Lat(\A)$ is its codimension in $\R^n$ and we call the codimension of $\bigcap_{H\in\A}H$ the \textbf{rank} of the arrangement $\A$. Moreover, the \textbf{atoms} of the lattice, i.e., codimension $1$ elements, are the hyperplanes in $\A$.
\end{remark}

For an arrangement $\A$ in $\R^n$ the complement $\R^n\setminus\bigcup_{H\in\A}H$ is disconnected and the connected components are of special interest in the study of hyperplane arrangements:

\begin{definition}
    The \textbf{chambers} of $\A$ are the closures of the connected components of $\R^n\setminus\bigcup_{H\in\A}H$. We denote the set of all chambers with $\C(\A)$. An intersection of two chambers spanning a codimension-1 subspace is called a \textbf{wall} of both chambers. Every wall is contained in a unique hyperplane of $\A$, which is given by span of the aforementioned intersection. For a given wall we call this hyperplane the \textbf{supporting hyperplane} of the wall. 
\end{definition}

In this article, we mainly focused on a particular class of arrangements, having specially shaped chambers.

\begin{definition}
    Let $\A$ be an arrangement. We call a chamber $C\in\C(\A)$ \textbf{simplicial} if the normal vectors of all the supporting hyperplanes of all walls of $C$ are linearly independent. If all elements of $\C(\A)$ are simplicial, then the arrangement $\A$ is called \textbf{simplicial}. 
\end{definition}

The most prominent examples of simplicial arrangements arise from \textit{finite reflection groups}, i.e., finite groups that are generated by orthogonal reflections in hyperplanes in $\R^n$. The irreducible finite reflection groups can be classified into types, consisting of the infinite families $A_n, B_n, D_n, I_2(m)$ as well as the exceptional cases $E_6,E_7,E_8,F_4,H_3,H_4$; see \cite{Hum90}.
Let $W$ be a finite reflection group acting on $\R^n$, then the collection $\A(W)$ of reflecting hyperplanes associated to the reflections in $W$ is called the \textbf{reflection arrangement of $W$}. 

\begin{ex}
     The reflection arrangements in $\R^n$ corresponding to the reflection groups of type $A, B$ and $D$ are given by the following sets of normal vectors of the reflecting hyperplanes:
     \begin{itemize}
         \item $\A_{n-1}:=\{e_i-e_j\;|\;1\leq i<j\leq n\}$,
         \item $\B_n:=\{e_i\pm e_j\;|\;1\leq i<j\leq n\}\cup\{e_i\;|\;1\leq i\leq n\}$ and
         \item $\D_n:=\{e_i\pm e_j\;|\;1\leq i<j\leq n\}$.
     \end{itemize}
    Here $e_i$ denote the standard basis vectors of $\R^n$, for $1\leq i\leq n$.
\end{ex}

Lastly, we review some common operations on arrangements.

\begin{definition}\label{def: deletion_restriction_arr}
   Let $\A$  be an arrangement in $\R^n$. Any subset of $\A$ is called a \textbf{subarrangement} of $\A$. The \textbf{deletion} of a hyperplane $H\in\A$ is the subarrangement $\A\setminus\{H\}\subseteq\A$ in $\R^n$. The \textbf{restriction} to a hyperplane $H\in\A$ is an arrangement in $H\cong\R^{n-1}$ given by:
   \[
        \A^H:=\{H\cap H^\prime\;|\;H^\prime\in\A\setminus\{H\}\}.
   \]
\end{definition}

\begin{remark}
    If $\A$ is a simplicial arrangement, then its restriction to any hyperplane $H\in\A$ is a simplicial arrangement as well. In this way, we can construct new simplicial arrangements from already known ones. For reflection arrangements, a full overview of their restrictions and relations is illustrated in \cite[Figure 9]{MS22}.
\end{remark}

\subsection{Matroids}\label{subsec:matroids}

In this subsection we briefly recap some general matroid theoretical notions and concepts that are used in the discussion of Chow polynomials, see  \Cref{sec:chow_polyn_part}.
We refer to \cite{O2011} for an overview of matroid theory.

\begin{definition}\label{def: mat}
    A \textbf{matroid} $M$ is a pair $(E,r)$, consisting of a finite set $E$ and a function $r:2^E\rightarrow\Z$, called the \textbf{rank function} satisfying the following axioms for all $A,B\in 2^E$:
    \begin{enumerate}
        \item  $0\leq r(A)\leq |A|$,
        \item  If $A\subseteq B$, then $r(A)\leq r(B)$, and
        \item  $r(A\cap B)+r(A\cup B)\leq r(A)+r(B)$. 
    \end{enumerate}
\end{definition}

Let $M=(E,r)$ be a matroid. We call $r(M):=r(E)$ the \textbf{rank} of $M$. A subset $A\in 2^E$ is called \textbf{independent} if $r(A)=|A|$, otherwise it is called \textbf{dependent}. A dependent subset of size 1 is called \textbf{loop} and one of size $2$ with rank $1$ is called \textbf{parallel element}. If there are no loops and parallel elements, $M$ is a \textbf{simple} matroid. The relation on the groundset of a matroid $M$ of being parallel is an equivalence. We can thus define a \textbf{simplification} of $M$ by deleting all loops from $M$ and identifying parallel elements.
Finally, we call a subset $F\in 2^E$ a \textbf{flat} of $M$ if $r(F\cup e)>r(F)$ for all $e\in E\setminus F$.

Every matroid $M$ can be equivalently described by its collection of independent sets, dependent sets, flats and hyperplanes, see \cite{O2011}. These equivalent descriptions are often called \textbf{cryptomorphic characterizations} of $M$.

The collection of flats of a matroid admits a poset lattice structure.

\begin{definition}
    Let $M=(E,r)$ be a matroid. Then the collection of flats of $M$ forms a lattice with respect to inclusion, called the $\textbf{lattice of flats}$ of $M$. We denote this lattice by $\Lat(M)$. For two flats $F,G\in\Lat(M)$ the \textbf{join} and the \textbf{meet} are defined as
    \[
    F\wedge G:=F\cap G\quad \text{and}\quad F\vee G:=\text{cl}(F\cup G),
    \]
    where $\text{cl}:2^E\rightarrow2^E$ is a function called the \textbf{closure}, assigning every $A\subseteq E$ the set
    \[
        \text{cl}(A):=\{x\in E\;|\;r(A\cup x)=r(A)\}.
    \]
\end{definition}

Note that from the definition of the closure function we see that a subset $F\subseteq E$ is a flat of $M$ if $F$ is its own closure, i.e., $\text{cl}(F)=F$.

\begin{remark}\label{rem: sim_mat_geom_lat_corresp}
    One can show that the lattice of flats of matroid even constitutes a geometric lattice. Moreover, there is even a one-to-one correspondence between simple matroids and geometric lattices, see \cite[Section 1.7]{O2011} for a detailed discussion.
    
\end{remark}

Now we review an important class of matroids, which provides the bridge to hyperplane arrangement theory. For our discussion in \Cref{sec:chow_polyn_part} we only use these kind of matroids. 

\begin{definition}\label{def: mat_repable}
    Let $\mathbb{K}$ be field and $\A=\{H_e\}_{e\in E}$ be an arrangement in $\mathbb{K}^n$, for some finite set $E$. This yields a matroid $M(\A)=(E,r)$ defined via the rank function
    \[
        r(S):=n-\dim\Bigg(\bigcap_{e\in S}H_e\Bigg).
    \]
    We call matroids arising in this way \textbf{representable over $\mathbb{K}$}.
\end{definition}

Note that the lattice of flats of a matroid corresponding to an arrangement coincides with the lattice of intersection of the arrangement, see \cite[Section 6]{O2011}. This means that the matroid is simple, since there exists no loops or parallel elements.

There are some operations we can perform on a given matroid to obtain new matroids.

\begin{definition}
    Let $M=(E,r_M)$ be matroid. The \textbf{restriction of $M$ to a subset $S\subseteq E$} is the matroid $M|_S:=(S,r_{M|_S})$ with rank function
    \[
        r_{M|_S}:=r_M(A)\quad \text{for all } A\subseteq S.
    \]
    Restricting to $E\setminus S$ is often called the \textbf{deletion of $S$ from $M$}.
    
    The \textbf{contraction of $M$ by a subset $T\subseteq E$} is the matroid $M/T:=(E\setminus T, r_{M/T})$ with rank function
    \[
         r_{M/T}(A):=r_M(A\cup T)- r_M(A)\quad \text{for all } A\subseteq E\setminus T.
    \]
\end{definition}

\begin{rem}
    If a matroid arises from an arrangement as in \Cref{def: mat_repable} the deletion and restriction operation of the arrangement as in \Cref{def: deletion_restriction_arr} correspond to the deletion and contraction operation of the matroid respectively.
\end{rem}

Lastly we would like to review a polynomial invariant attached to a given matroid, which is used for the discussion later on in \Cref{subsec:arthmeticity_chow_EL}.

\begin{definition} \label{def-char}
    Let $M=(E,r)$ be a matroid of rank $k$. The \textbf{characteristic polynomial $\chi(M)$ of $M$} is defined as
    \[
        \chi(M)(t):=\sum_{S\subseteq E}(-1)^{|S|}t^{k-r(S)}\in\R[t].
    \]
\end{definition}

Note that $t=1$ is always a root of the characteristic polynomial $\chi(M)$. Therefore we also define the \textbf{reduced characteristic polynomial of $M$} given by $\overline{\chi}(M)(t):=\frac{\chi(M)(t)}{t-1}$.

\section{$\D_{n,s}$ arrangements and arithmeticity of matroid invariants} \label{sec: arithmeticity-of-matroid-invariants}

The intermediate arrangements $\D_{n,s}$ can be viewed as an interpolation between the arrangements of type $D$ and $B$. In the spirit of this, we explore matroid invariants $\phi$ of the $\D_{n,s}$-arrangements that interpolate linearly between $\phi(\D_n)$ and $\phi(\B_n)$. We exploit the fact that the sequence of the corresponding lattices of flats behaves in a very special way. That is, they induce what we will call a \emph{coherent} partition of the lattice of flats of $\B_n = \D_{n,n}$. The main result of this section is roughly that the elements of the incidence algebra of a lattice $\L$ that behave arithmetically on any coherent partition of $\L$ form a subalgebra. Since this subalgebra is further closed on multiplying by functions that depend only on the rank of the lattice, it follows that a lot of known matroid invariants are in it as well. Since we work only with functions depending only on the isomorphism type of the lattice, we will not further talk about incidence algebra, but about functions from lattices.

\subsection{Union-additive invariants of lattices}

Next, we introduce the notion of \emph{coherent partition} of a lattice and of a union-additive invariant on such a partition. We will see that these notions exactly fit the case of $\D_{n,s}$ arrangements and the invariants we are interested in.

\begin{notation}
    We denote by $\Latt$ the set of all finite graded lattices. For $\L\in \Latt$, we denote by $\rk(\L)$ the rank of the maximal element. Then $\Latt_{\leq r}\subset \Latt$ denotes the set of lattices of rank at most $r$. Further, we denote by $0$ the minimal element and by $1$ the maximal element.
\end{notation}
\begin{definition}
    Let $\L = (L,\leq)$ be a finite graded lattice. We call $(L_0, \dots ,L_k)$ a \emph{coherent partition of $\L$} if
    \begin{enumerate}
        \item $(L_0,\dots, L_k)$ is a partition of $L$, with $L_i$ potentially empty for some $i$,
        \item $\L_0:=(L_0,\leq)$, $\L_1:= (L_0\cup L_1,\leq)$, $\dots$, $\L_k:=(L_0\cup L_k, \leq)$ are finite graded lattices,
        \item $\rk(\L_0) = \rk(\L)$,
        \item (incomparability) if $i\neq j$, $i,j>0$, and $x\in L_i$, $y\in L_j$, $x$ and $y$ are incomparable in $\L$,
        \item (convexity) if $x,z\in L_i$ for some $i\geq 1$, $y\in L$ and $x\leq y\leq z$, then $y\in L_i$.
    \end{enumerate}
\end{definition}
Note that item (3) implies that the minimal and maximal element of $\L$ are in $\L_i$ for all $i\geq 0$. Also note that for most of the theory we can weaken the assumption that $\L_i$'s are finite graded lattices to them being finite graded posets with a unique maximal and minimal element.

\begin{definition}
    Let $R$ be a commutative ring. We call a function $f: \Latt\to R$ \emph{union-additive on $\Latt_{\leq r}$} if for every finite graded lattice $\L = (L,\leq)\in \Latt_{\leq r}$ and for every coherent partition $(L_0,\dots ,L_k)$ of $\L$ we have
    $$
        f(\L) = f(\L_0) + \sum_{i=1}^k (f(\L_i)-f(\L_0)).
    $$
    We say that $f$ is \emph{union-additive} if it is union-additive on $\Latt_{\leq r}$ for every $r\in \mathbb{N}_0$.
\end{definition}
Usually, we consider $R = \Z[t]$.

We are interested in coherent partitions and union additive functions because of the following two propositions. They show how union-additive invariants behave on certain family of hyperplane arrangements. We will abuse notation and write $\L(\mathcal A)$ also for the ground set of the lattice of flats of a hyperplane arrangement $\A$.

\begin{proposition}\label{prop:description-of-lattice}
    Let $\A$ be a hyperplane arrangement and let $H_1$, $\dots$, $H_k$ be hyperplanes not in $\A$ such that for all $i\neq j$ we have $H_i\cap H_j = \bigcap_{H\in S\subset \A} H$ for some set of hyperplanes $S \subset \A$. Then $(\L(\A), \L(\A\cup\lbrace H_1\rbrace)\setminus\L(\A), \dots, \L(\A\cup\lbrace H_k\rbrace)\setminus \L(A))$ is a coherent partition of $\L(\A\cup\lbrace H_1,\dots ,H_k)\rbrace$.
\end{proposition}
\begin{proof}
    The items (2) and (3) of the definition of a coherent partition are obviously satisfied.
    
    Observe that for all $r> 1$ and $\lbrace i_1,\dots ,i_r\rbrace\subset [k]$, 
    $$
        H_{i_1}\cap \dots \cap H_{i_r} = (H_{i_1}\cap H_{i_2})\cap \dots \cap (H_{i_{r-1}}\cap H_{i_r}),
    $$ 
    hence $H_{i_1}\cap \dots \cap H_{i_r} = \bigcap_{H\in S\subset \A} H$ for some $S \subset \A$.
    
    Item (1): Let $F = \bigcap_{i\in I\subset [k]}H_i \cap \bigcap_{H\in S\subset \A} H$ be a flat of $\A\cup\lbrace H_1,\dots ,H_k\rbrace$. If $|I| = 1$, say $I = \lbrace i \rbrace$, $F$ is a flat of $\A\cup \lbrace H_i\rbrace$. If $|I| = 0$, $F$ is a flat of $\A$. If $|I|\geq 2$, we can replace $\bigcap_{i\in I\subset [k]}H_i$ by $\bigcap_{H\in T\subset \A} H$ for some $T$, showing that $F$ is a flat of $\A$.

    Incomparability: Let $F$ and $G$ be flats in $\L(\A\cup\lbrace H_i\rbrace)\setminus\L(\A)$ and $\L(\A\cup\lbrace H_j\rbrace)\setminus\L(\A)$ respectively. Assume that $F\subset G$. We know that $F = H_i \cap \bigcap_{H\in S\subset A} H$ for some $S$ and $G = H_j \cap \bigcap_{H\in T\subset A} H$ for some $T$. In particular, $G\subset H_j$, so $F\subset H_j$ and we have $F = H_i\cap H_j\cap \bigcap_{H\in S\subset A} H$. But then we can replace $H_i\cap H_j$ by an intersection of hyperplanes in $\A$, contradicting that $F$ is not a flat of $\A$.

    Convexity: We proceed by contradiction. Take $F_1, F_3\in \L(A\cup \lbrace H_i\rbrace)$ such that $F_1,F_3\not\in \L(\A)$ and $F_2\in \L(A\cup\lbrace H_1,\dots , H_k)$ such that $F_1\subset F_2\subset F_3$ and for contradiction $F_2\not\in L_i$. By incomparability, it suffices to assume that $F_2\in \L(\A)$. We can write 
    \begin{align*}
        F_1 &= H_i \cap \bigcap_{H\in S\subset \A} H,
    \end{align*}
    for some $S$. Since $F_1\subset F_2\subset F_3 \subset H_i$, we can replace $H_i$ by $F_2$ in the expression for $F_1$. Because $F_2\in \L(\A)$, this shows that $F_1\in \L(\A)$, which is a contradiction.
\end{proof}

Recall that the $\mathcal D_n$ arrangement in $\R^n$ consists of hyperplanes $\lbrace x_i \pm x_j = 0\mid i,j\in [n],i\neq j\rbrace$. For $s\in [n]\cup\lbrace 0\rbrace$, we define the arrangement $\D_{n,s}$ as $\mathcal D_n$ with added hyperplanes $\lbrace x_i = 0\rbrace$ for $i\in [s]$. Then $\mathcal D_{n,n} = \mathcal B_n$. The following result shows that union-additive invariants behave arithmetically on the lattices of $\D_{n,s}$ arrangements. This motivates the subsequent development of tools for showing union-addititvity for many invariants.

\begin{proposition}\label{prop:arithmeticity-for-Dns}
    Let $R$ be a commutative ring and let $\phi:\Latt\to R$ be a union-additive function. Then for fixed $n$, $\phi(\L(\mathcal D_{n,s})) - \phi(\L(\mathcal D_{n,{s-1}}))$ is independent of the choice of $s\in [n]$.
\end{proposition}
\begin{proof}
    For $i\neq j$, 
    $$\lbrace x_i = 0\rbrace \cap \lbrace x_j=0\rbrace = \lbrace x_i-x_j = 0\rbrace \cap \lbrace x_i+x_j = 0\rbrace.$$
    Hence, the arrangement $\mathcal D_n$ with the hyperplanes $\lbrace x_1=0\rbrace$, $\dots$, $\lbrace x_s = 0\rbrace$ satisfies the assumptions of Proposition \ref{prop:description-of-lattice}, so
    $$
        \phi(\L(\mathcal D_{n,s})) = \phi(\L(\mathcal D_n)) + \sum_{i=1}^s \phi(\L(\mathcal D_n\cup\lbrace\lbrace x_i=0\rbrace\rbrace)) - \phi(\L(\mathcal D_n)).
    $$
    
    By symmetry, $\mathcal D_n\cup\lbrace\lbrace x_i=0\rbrace\rbrace$ is isomorphic over all $i$, hence 
    $$
    \phi(\L(\mathcal D_{n,s})) - \phi(\L(\mathcal D_{n,{s-1}})) = \phi(\L(\mathcal D_n\cup\lbrace\lbrace x_s=0\rbrace\rbrace)) - \phi(\L(\mathcal D_n))$$ 
    is independent of $s$.
\end{proof}

The following theorem summarizes how we can obtain union-additive functions.

\begin{theorem}\label{thm:subalg-of-union-additive-functions}
    Let $R$ be a commutative ring and let $\mathcal{R}$ be the ring of functions $\Latt \to R$ (equipped with pointwise addition and multiplication) that depend only on the rank of a lattice. Let $\mathcal S$ be the ring of all functions $\Latt \to R$ where the multiplication is the convolution, that is
    \[
        (f\star g)(\L) = \sum_{x\in \L} f([0,x]_\L)\cdot g([x,1]_\L) ~~~\text{for}~~~ f,g\in \mathcal S.
    \]
    Furthermore, we endow $\mathcal S$ with point-wise multiplication by elements of $\mathcal R$, that is 
    \[
        (r\cdot f)(\L) = r(\L)\cdot f(\L)~~~\text{for}~~~r\in \mathcal R, f\in \mathcal S.
    \]
    Then the union-additive functions in $\mathcal S$ form a subring closed under the point-wise multiplication by elements of $\mathcal R$.
\end{theorem}
\begin{proof}
    By item (3) of the definition, for any $r\in \mathcal{R}$ we have $r(\L)=r(\L_i)$ for all $i\geq 0$.
    This implies that elements of $\mathcal R$ are union-additive. By linearity, union-additive functions are closed under addition and under point-wise multiplication by elements in $\mathcal R$. It remains to show that they are closed under convolution, which is done in Proposition \ref{prop:sum-over-simple-products}.
\end{proof}

\begin{remark}
    Note that convolution is the product considered in the incidence algebra of a lattice.
\end{remark}

To prove \Cref{prop:sum-over-simple-products}, we need the following:

\begin{lemma}\label{lemma:intersecting-with-interval}
    Consider $\L\in \Latt$ and its coherent partition $(L_0,\dots ,L_k)$. Let $x,y\in L_0$. Then $(L_0\cap [x,y], \dots, L_k\cap [x,y])$ is a coherent partition of $[x,y]_\L$.
\end{lemma}
\begin{proof}
    We check the definition:
    \begin{enumerate}
        \item Intersecting subset with partition gives a partition of that subset again.
        \item Follows from $x,y\in L_0$.
        \item Finite graded lattices are closed under taking intervals.
        \item Taking smaller lattice does not change this.
        \item Same as the previous point.
    \end{enumerate}
\end{proof}

\begin{proposition}(Convolution)\label{prop:sum-over-simple-products}
    Let $R$ be a commutative ring, $f_1,f_2,f:\Latt\to R$, $f_1$ and $f_2$ union-additive, such that for every $\L = (L,\leq )\in \Latt$
    $$
        f(\L) = \sum_{x\in L} f_1([0,x]_\L)f_2([x,1]_\L).
    $$
    Then $f$ is union-additive.
\end{proposition}
\begin{proof}
    For any lattice $\L$ and $x\in \L$, we denote $f_1([0,x]_\L)f_2([x,1]_\L)$ by $F_{x,\L}$.
    
    Fix any lattice $\L$ and let $(L_0,\dots, L_k)$ be a coherent partition of $\L$. Since this is a partition of $L$, we can split the sum as follows:
    $$
    \sum_{x\in L} F_{x,\L} = \sum_{i=0}^k\sum_{x\in L_i} F_{x,\L}.
    $$
    By the incomparability property, $[0,x]_\L =[0,x]_{\L_i}$ and $[x,1]_\L = [x,1]_{\L_i}$ whenever $x\in L_i$ for some $i\geq 1$. Hence, in this case, $F_{x,\L} = F_{x,\L_i}$.

    If $x\in L_0$, incomparability and convexity imply that at most one of $[0,x]_\L$ and $[x,1]_\L$ contains an element not in $L_0$: Picking some $y < x$ and $z >x$, it cannot be that $y,z$ are both in $L_i$ for some $i$ by convexity, and there cannot be $i\neq j$ such that $y\in L_i$ and $z\in L_j$ because otherwise $y$ and $z$ would be comparable, a contradiction. Then there are only three cases to consider:
    \begin{align*}
        F_{x,\L} &=F_{x,\L_0} = F_{x,\L_0} + \sum_{i=1}^{k} \left(F_{x,\L_i} - F_{x,\L_0}\right)& \text{if } [0,x]_\L,[x,1]_\L\subset \L_0,\\
        F_{x,\L} &= f_1([0,x]_\L)f_2([x,1]_\L) = \\
        & = f_2([x,1]_{\L_0})\cdot \left( f_1([0,x]_{\L_0}) + \sum_{i=1}^k f_1([0,x]_{\L_i}) - f_1([0,x]_{\L_0})\right)=\\
        &= F_{x,\L_0} + \sum_{i=1}^{k} \left( F_{x,\L_i} -  F_{x,\L_0}\right)
        &\text{if }[0,x]_{\L}\not\subset \L_0,\\
         F_{x,\L}&= F_{x,\L_0} + \sum_{i=1}^{k} \left( F_{x,\L_i} -  F_{x,\L_0}\right) &\text{if }[x,1]_{\L}\not\subset \L_0,
    \end{align*}
    where  in the second step of the first equality we just added zero, in the proof of the second equality we use union-additivity of $f_1$ and Lemma \ref{lemma:intersecting-with-interval} for the interval $[0,x]_\L$, and the last equality is proved analogously as the second one. Altogether,
    \begin{align*}
        \sum_{i=0}^k\sum_{x\in L_i} F_{x,\L} &= \sum_{x\in L_0} \left(  F_{x,\L_0} + \sum_{i=1}^{k} \left( F_{x,\L_i} -  F_{x,\L_0}\right)\right) + \sum_{i=1}^k \sum_{x\in L_i}  F_{x,\L_i} =\\
        &= \sum_{x\in L_0}  F_{x,\L_0} + \sum_{i=1}^k \left(\sum_{x\in L_0\cup L_i}  F_{x,\L_i} - \sum_{x\in L_0} F_{x,\L_0}\right) = \\
        & = f(\L_0) + \sum_{i=1}^k\left( f(\L_i) -f(\L_0)\right),
    \end{align*}
    as desired.
\end{proof}

The following propositions can be used to prove union-additivity by induction.

\begin{definition}
    Let $\delta_{\rk = r}$ be the function which is $1$ on lattices of rank $r$ and zero otherwise. We define $\delta_{\rk \neq r}$ analogously.
\end{definition}

\begin{proposition}(Recursion from above)\label{prop:recursion-from-above}
    Let $R$ be a commutative ring, $r\in \mathbb N_0$, $f_1,f_2,f:\Latt\to R$, $f_1$ union-additive, $f_2$ union-additive on $\Latt_{\leq r}$, such that for every $\L\in \Latt_{\leq r+1}$
    $$
        f(\L) = \sum_{x\in \L, x\neq 0} f_1([0,x]_\L)f_2([x,1]_\L).
    $$
    Then $f$ is union-additive on $\Latt_{\leq r+1}$.
\end{proposition}
\begin{proof}
    We can extend $f_2$ to $f_2':\Latt\to R$ by $f_2'(\L) = f_2(\L)$ if $\rk(\L) \leq r$ and $f_2'(\L) = 0$ otherwise. Then $f_2'$ is union-additive. Furthermore, let $f_1':\Latt \to R$ be the pointwise product $\delta_{\rk \neq 0}\cdot f_1$. By \Cref{thm:subalg-of-union-additive-functions}, $f_1'$ is union-additive. Then 
    \[f(\L) = \sum_{x\in \L} f_1'([0,x]_\L) f_2'([x,1]_\L) ~~~\text{for}~~~ \L\in \Latt_{\leq r+1}.\] 
    Finally, \Cref{prop:sum-over-simple-products} with $f_1 = f_1'$ and $f_2 = f_2'$ shows that $\sum_{x\in \L} f_1'([0,x]_\L) f_2'([x,1]_\L)$ is union-additive.
\end{proof}

\begin{proposition}(Recursion from below)\label{prop:recursion-from-below}
    Let $R$ be a commutative ring, $r\in \mathbb N_0$, $f_1,f_2,f:\Latt\to R$, $f_1$ union-additive on $\Latt_{\leq r}$, $f_2$ union-additive, such that for every $\L\in \Latt$
    $$
        f(\L) = \sum_{x\in \L, x\neq 1} f_1([0,x]_\L)f_2([x,1]_\L).
    $$
    Then $f$ is union-additive on $\Latt_{\leq r+1}$.
\end{proposition}
\begin{proof}
    Analogous to the proof of Proposition \ref{prop:recursion-from-above}.
\end{proof}

\begin{proposition}\label{prop:sum-over-k-flags}
    Let $R$ be a commutative ring and let $g:\Latt \to R$ be a union additive function. Further, let $\mathcal F_{k,\L}$ be the set of all flags $0=x_0 < x_1 < \dots < x_{k+1} =1$ in a lattice $\L$. Then 
    $$g_k(\L)=\sum_{(x_0,\dots ,x_{k+1})\in \mathcal F_{k,\L}}\prod_{i=0}^k g([x_i,x_{i+1}]_\L)$$ 
    is union-additive.
\end{proposition}
\begin{proof}
    We proceed by induction on $k$. The case $k=0$ is clear and the case $k=1$ is Proposition \ref{prop:sum-over-simple-products} with $f_1=f_2 = g$. 
    
    For the induction step, write
    $$\sum_{(x_0,\dots ,x_{k+1})\in \mathcal F_{k,\L}}\prod_{i=0}^k g([x_i,x_{i+1}]_\L) = \sum_{x\in \L}\left( g([x,1]_\L)\cdot \sum_{(x_0,\dots ,x_k)\in \mathcal F_{k-1,[0,x]_{\L}}} \prod_{i=0}^{k-1} g([x_i,x_{i+1}]_\L)\right).$$ 
    Note that for $i\leq k-1$, $g([x_i,x_{i+1}]_\L) = g([x_i,x_{i+1}]_{[0,x]_\L})$. By the induction hypothesis, we can use Proposition \ref{prop:sum-over-simple-products} with $f_1 = g_{k-1}$ and $f_2 =g$.
\end{proof}

\begin{corollary}\label{cor:sum-over-all-flags}
    Let $R$ be a commutative ring and let $g:\Latt \to R$ be a union additive function. Further, let $\mathcal F_\L$ be the set of all flags $0=x_0 < x_1 < \dots < x_{k+1} =1$ for all $k$ in a lattice $\L$. Then 
    $$g_\infty (\L)=\sum_{(x_0,\dots ,x_{k+1})\in \mathcal F_{\L}}\prod_{i=0}^k g([x_i,x_{i+1}]_\L)$$ 
    is union-additive.
\end{corollary}
\begin{proof}
    We have $g_\infty(\L) = \sum_{k=0}^{\rk(\L)} g_k(\L)$, where $g_k$ is defined as in Proposition \ref{prop:sum-over-k-flags}. Hence, we expressed $g_\infty(\L)$ as a sum of union-additive functions.
\end{proof}

\subsection{Known union-additive invariants}

With \Cref{thm:subalg-of-union-additive-functions}, it is now rather easy to see that a lot of matroid invariants are union-additive.

We define the characteristic polynomial of a lattice $\L$ by
\[ \chi(\mathcal{L})(t) := \sum_{A\in \mathcal{L}} \mu([0,A]_\L) t^{rk(A)}, \]
where $\mu:\Latt\to \mathbb{Z}$ is the M\"obius function recursively defined as $1$ on the one element lattice and
\[ \mu(\mathcal{L}) = -\sum_{0 \leq B<1} \mu([0,B]_\L).\]
We define the reduced characteristic polynomial of $\mathcal{L}$ as
\[ \overline{\chi}(\L)(t) := \frac{\chi(\L)(t)}{t-1}. \]
Note that if $\mathcal{L}$ is a lattice of flats of a matroid, which is the case in the proof below, then the definitions above coincide with \Cref{def-char}.\\

\begin{corollary}\label{cor:Mobius-function-is-union-additive}
    The Möbius function is union-additive.
\end{corollary}
\begin{proof}
    We use induction on rank. For the base of induction, every function is union-additive on $\Latt_{\leq 0}$. For the induction step, use \Cref{prop:recursion-from-below} with $f_1 = \mu$ and $f_2$ being constant $-1$.
\end{proof}

\begin{corollary}
    The characteristic polynomial, and hence also the reduced characteristic polynomial, is union-additive.
\end{corollary}
\begin{proof}
    If $t$ is the variable, the function $\mu([0,x])t^{\rk(x)} = \mu([0,x])t^{\rk([0,x])}$ is union-additive by Corollary \ref{cor:Mobius-function-is-union-additive}. Since the characteristic polynomial is defined as $\chi(\L)(t) = \sum_{x\in \L} \mu([0,x]_\L)t^{\rk(x)}$, it is union-additive by \Cref{prop:sum-over-simple-products}.

    Since the reduced characteristic polynomial is defined as $\overline{\chi}(\L)(t) = \frac{\chi(\L)(t)}{t-1}$, its union-additivity follows immediately.
\end{proof}

\begin{remark}
    There are many other union-additive functions. We are not aware of a valuative invariant of a matroid depending only on its simplification which is not union-additive. Here are some examples:
    \begin{enumerate}
        \item Chow polynomial (see \Cref{prop:Chow-is-union-additive}),
        \item flag $f$-vector and $h$-vector (for definition see \cite[Section 3]{NS04}),
        \item Kazhdan-Lusztig polynomials (for appropriate formula see \cite[Proposition 2.12]{EPW16}),
        \item Z-polynomial (for definition see \cite[Theorem 9.1]{FS22}),
        \item volume polynomial (for appropriate formula see \cite[Theorem 3.2 and Corollary 3.3]{E20}).
    \end{enumerate}
    On the other hand, the Tutte polynomial is not union-additive, as can be seen from the Tutte polynomial for $D_2$, $D_{2,1}$ and $B_2$.
\end{remark}

\begin{conjecture}
    Valuative invariants of matroids are union additive if and only if they depend only on the simplification of a matroid.
\end{conjecture}

\section{Gamma positivity of restrictions of reflection arrangements}\label{sec:gamma_pos_part}

\subsection{Simplicial arrangements and simplicial complexes}\label{subsec: prelims_gamma_pos_part}

In this subsection, we recall some results concerning simplicial complexes associated to simplicial arrangements. In particular, we review different ways of computing the $h$-polynomial of the associated complex and discuss some known $\gamma$-positivity statements.

Given a simplicial hyperplane arrangement $\A$ in $\mathbb{R}^n$ with collection of chambers $\C(\A)$. A \textit{cone} of $\A$ is an intersection of a subset of chambers from $\C(\A)$. The \textit{dimension} of a cone corresponds to the dimension of the subspace it spans.

Let $R$ be the set of rays of $\A$ (i.e. the 1-dimensional cones) and define
\[
    \Delta_\A:=\{\sigma\subseteq R:\sigma\text{ is the set of rays in some cone of }\A\}
\]
which is a simplicial complex. Alternatively, we can intersect all cones of $\A$ with the sphere $S^{d-1}$ to obtain the faces of a topological simplicial complex which is isomorphic to $\Delta_\A$. This shows that $\Delta_\A$ triangulates a $(d-1)$-sphere and hence its $h$-polynomial has non-negative and symmetric coefficients. Thus, it makes sense to speak of the $\gamma$-vector of a simplicial arrangement.

Note that in this special case, positivity and symmetry are relatively easy to prove directly. Indeed, the positivity of the $h$-vector follows from the following formula, which is a reformulation of \cite[Theorem 2.3.]{B94}:

\begin{lem}
    \label{lem: h_poly_from_sep_chambers}
    Let $\A$ be a simplicial hyperplane arrangement with chambers $\mathcal{C}$ and choose some $C_0\in\mathcal{C}$. Then 
    \[
        h(\A)(t)=\sum_{C\in\mathcal{C}}t^{\text{sep}(C)}
    \]
    where $\text{sep}(C)$ is the number of walls of $C$ which correspond to hyperplanes separating $C$ from $C_0$.
\end{lem}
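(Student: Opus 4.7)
The plan is to recognize this as a shelling formula for the simplicial complex $\Delta_\A$ and to exhibit a concrete shelling whose restriction-face statistics reproduce $\mathrm{sep}(C)$. Top-dimensional simplices of $\Delta_\A$ are in bijection with chambers $C\in\mathcal{C}$ (a chamber is a simplicial cone spanned by $d$ rays, giving a $(d-1)$-simplex on the sphere), and a wall of $C$ corresponds to a facet of the associated simplex.

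First I would order the chambers $C_0,C_1,C_2,\dots$ by any linear extension of the partial order on $\mathcal{C}$ defined by gallery distance from $C_0$, i.e.\ by the number $\ell(C):=\#\{H\in\A:H\text{ separates }C\text{ from }C_0\}$. The key geometric observation is that for adjacent chambers $C,C'$ sharing a wall $W$ with supporting hyperplane $H$, the hyperplane $H$ separates exactly one of them from $C_0$; consequently $\ell$ differs by $1$ across any wall, and $H\in\mathrm{sep}(C)$ iff the unique neighbor $C'$ across $W$ has $\ell(C')=\ell(C)-1$, iff $C'$ precedes $C$ in the chosen order.

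Next I would verify that this order is a shelling of $\Delta_\A$. For each $C=C_i$ (with $i\geq 1$) the subcomplex $\langle C_0,\dots,C_{i-1}\rangle\cap\langle C\rangle$ consists precisely of the walls of $C$ shared with some earlier chamber, and by the previous paragraph these are the walls whose supporting hyperplanes lie in $\mathrm{sep}(C)$. Since $C$ is a simplex, the union of $\mathrm{sep}(C)$-many facets is a pure $(d-2)$-subcomplex of $\partial C$, which is the shelling condition. One then identifies the restriction face $R(C)$: it is the intersection of the vertices of $C$ opposite the ``back walls'', so $|R(C)|=|\mathrm{sep}(C)|=\mathrm{sep}(C)$.

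Finally, invoking the standard shelling formula
\[
h(t)=\sum_{C\in\mathcal{C}} t^{|R(C)|}
\]
together with $|R(C_0)|=0$, the claimed identity follows. The main obstacle is the shellability verification, but the hyperplane-crossing dichotomy above makes it essentially automatic: the nontrivial content is the geometric fact that $\ell$ changes by $\pm 1$ across each wall, which in turn rests on the well-known bijection between walls of $C$ and hyperplanes of $\A$ supporting those walls, valid precisely because $\A$ is simplicial.
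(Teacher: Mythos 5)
The paper does not actually supply a proof here; it cites the result directly as a reformulation of \cite[Theorem 2.3]{B94}, so the only fair comparison is whether your argument would stand on its own. Your overall strategy — order chambers by gallery distance from $C_0$, show this is a shelling of $\Delta_\A$, identify the restriction faces, and apply the shelling formula for the $h$-polynomial — is the right one and is indeed the classical approach behind the cited theorem. The crossing dichotomy (a hyperplane supporting a shared wall separates exactly one of the two adjacent chambers from $C_0$, so $\ell$ changes by $\pm 1$ across every wall, and a wall is a ``back wall'' iff its supporting hyperplane lies in $\mathrm{sep}(C)$) is correct and well stated, as is the computation $|R(C)|=\mathrm{sep}(C)$ once shellability is in hand.

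The gap is in the sentence asserting that $\langle C_0,\dots,C_{i-1}\rangle\cap\langle C_i\rangle$ ``consists precisely of the walls of $C_i$ shared with some earlier chamber.'' The forward inclusion is immediate, but the reverse requires an argument: a priori a \emph{lower-dimensional} face $\tau$ of $C_i$ could also be a face of some earlier $C_j$ without $\tau$ lying in any back wall of $C_i$. Ruling this out needs the gate property of stars (equivalently, convexity of the set of chambers containing a fixed face): among chambers whose closure contains $\tau$, the restriction of $\ell$ splits as a constant plus the gallery distance in the localized arrangement at the flat spanned by $\tau$, so $C_i$ (not being the gate, since $\ell(C_j)<\ell(C_i)$) has a neighbor $C'$ within the star with $\ell(C')<\ell(C_i)$, and the wall $W$ between $C_i$ and $C'$ both contains $\tau$ and is a back wall. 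Without this step, the set on the right-hand side of your ``consists precisely of'' has only been shown to be a subcomplex of the left-hand side, not equal to it, and the shelling condition is unverified. Your closing remark that the shelling is ``essentially automatic'' from the $\pm 1$ dichotomy is therefore too strong: that dichotomy identifies \emph{which} walls are back walls but does not by itself show that every previously seen face of $C_i$ sits inside one of them.
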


This also easily implies symmetry of the $h$-vector, since we can apply the above formula with either $C_0$ or its negative $-C_0$ as the base region, noting that the walls which separate a region $C$ from $C_0$ are exactly the walls which do not separate $C$ from $-C_0$.
Altogether, we see that simplicial arrangements have a well-defined $\gamma$-vector.

We can compute the $h$-polynomial using a special directed graph attached to a reflection arrangement.

\begin{df}
    Let $\mathcal{A}$ be a simplicial arrangement and $\mathcal{C}$ its collection of chambers.
    The \textbf{tope graph} of $\mathcal{A}$ is the simple undirected graph $T=(V,E)$, where $V=\mathcal{C}$ is the set of chambers of $\mathcal{A}$ and $\{C_1,C_2\} \in E$ whenever the corresponding chambers $C_1,C_2$ share a common wall.
\end{df}

Let us fix a fundamental chamber $C_0\in\mathcal{C}$ and orient all hyperplanes in $\mathcal{A}$ such that $C_0$ is in the positive halfspace of every hyperplane. So we can represent it via a sign-vector of the form $(+,\ldots,+)$. Similarly, we can represent all the other chambers by sign vectors. Therefore, the above graph $T$ can be interpreted in the following way: Vertices correspond to sign vectors and two vertices are joined by an edge if there is a sign flip in the position associated to the shared hyperplane between them. With this interpretation, we can direct the tope graph as follows.

\begin{df}
  Let $\mathcal{A}$ be an simplicial arrangement and $T=(V,E)$ its topegraph.
  We define the \textbf{directed topegraph}, again denote by $T=(V,E)$, by directing an egde towards the vertex having the greater amount of negative signs in its sign-vector analogue.   
\end{df}

Utilizing this notion of a directed topegraph we obtain the following result, which is a direct consequence of \cite[Proposition 2.1.]{PRW08}.

\begin{lem}
\label{lem: h_poly_from_tope_graph}
    Let $\mathcal{A}$ be an simplicial arrangement and $T=(V,E)$ its directed topegraph. Then we get
    \[
      h(\mathcal{A})(t)=\sum_{v\in V}t^{\emph{\indeg}(v)},
    \]
    where $\emph{\indeg}(v)$ denotes the number of incoming edges to $v$, i.e., the in-degree of $v$.
\end{lem}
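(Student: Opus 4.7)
The plan is to derive the formula directly from \Cref{lem: h_poly_from_sep_chambers} by identifying, for each chamber $C$ with corresponding vertex $v$ of the directed tope graph, the in-degree $\indeg(v)$ with the number $\text{sep}(C)$ of walls of $C$ whose supporting hyperplane separates $C$ from the fundamental chamber $C_0$. Once this equality is established the lemma follows immediately by summing over all chambers and invoking \Cref{lem: h_poly_from_sep_chambers}.

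To prove that $\indeg(v)=\text{sep}(C)$, I would first orient every hyperplane of $\mathcal{A}$ so that $C_0$ lies in its positive half-space, thereby encoding each chamber by a sign vector and $C_0$ by the all-positive vector. Under this convention the edges of the tope graph incident to $v$ correspond bijectively to the walls of $C$, and the shared wall between two adjacent chambers encodes precisely the unique coordinate in which their sign vectors differ. The orientation rule of the directed tope graph then says that an incoming edge at $v$ comes from an adjacent chamber $C'$ whose sign vector has strictly fewer negative entries than that of $C$; equivalently, the shared wall has supporting hyperplane $H$ such that $C\subseteq H^{\leq}$ while $C'\subseteq H^{\geq}$. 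Since $C_0\subseteq H^{\geq}$ by construction, this hyperplane separates $C$ from $C_0$, and so the incoming edges at $v$ are in natural bijection with the walls of $C$ whose supporting hyperplane separates $C$ from $C_0$.

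Substituting $\indeg(v)=\text{sep}(C)$ into \Cref{lem: h_poly_from_sep_chambers} then yields
\[
    h_{\mathcal{A}}(t)=\sum_{C\in\mathcal{C}}t^{\text{sep}(C)}=\sum_{v\in V}t^{\indeg(v)},
\]
as desired. I do not foresee a serious obstacle here: the argument is a straightforward translation between the geometric picture of separating hyperplanes and the combinatorial picture of sign-vector flips in the tope graph, and the only real care needed is to consistently apply the orientation convention and to check that walls of $C$, rather than arbitrary separating hyperplanes, are exactly the ones counted by incident edges.
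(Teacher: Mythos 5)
Your proof is correct, but it takes a different route from the paper: the paper does not prove \Cref{lem: h_poly_from_tope_graph} at all, instead stating it as a direct consequence of \cite[Proposition 2.1.]{PRW08}, whereas you derive it self-containedly from \Cref{lem: h_poly_from_sep_chambers} (the separation-count formula quoted from \cite{B94}). Your key step — that incoming edges at the vertex $v$ of a chamber $C$ biject with walls of $C$ whose supporting hyperplane separates $C$ from $C_0$ — is sound: adjacent chambers differ in exactly the sign coordinate of the shared wall's supporting hyperplane, so the orientation rule (point toward the sign vector with more negative entries) marks an edge as incoming at $v$ precisely when $C$ lies in the negative half-space of that hyperplane, i.e.\ when it separates $C$ from the all-positive base chamber $C_0$; hence $\indeg(v)=\text{sep}(C)$ and summing gives the formula. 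The only point worth making explicit is the fact you use implicitly, namely that crossing a wall flips only the single sign of its supporting hyperplane (which is how the paper describes the tope graph anyway). What your approach buys is a short, elementary argument that makes the link between the two $h$-polynomial formulas transparent and avoids the external citation; what the paper's approach buys is brevity, since \cite[Proposition 2.1.]{PRW08} already packages the statement about in-degrees of such orientations.
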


Then the fundamental question is which simplicial arrangements are $\gamma$-positive?
A first promising result is this:

\begin{thm}\cite[Theorem 1.2.]{S07}
\label{thm: all_ref_arr_gam_pos}
    All reflection arrangements are $\gamma$-positive.
\end{thm}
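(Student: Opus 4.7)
The plan is to reduce to the case of irreducible reflection groups and then argue type by type using the classification of finite reflection groups.

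If $W = W_1 \times W_2$ is reducible, then the arrangement $\A(W)$ is a product and $\Delta_{\A(W)}$ is the topological join of $\Delta_{\A(W_1)}$ and $\Delta_{\A(W_2)}$. Hence $h_{\A(W)}(t) = h_{\A(W_1)}(t)\cdot h_{\A(W_2)}(t)$, and since
\[
t^i(1+t)^{d_1-2i}\cdot t^j(1+t)^{d_2-2j} = t^{i+j}(1+t)^{(d_1+d_2)-2(i+j)},
\]
the product of two $\gamma$-positive polynomials is again $\gamma$-positive. Thus it suffices to treat the irreducible types $A_{n-1}, B_n, D_n, I_2(m), H_3, H_4, F_4, E_6, E_7, E_8$.

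For an irreducible finite reflection group $W$, the group acts simply transitively on the chambers of $\A(W)$. After fixing a fundamental chamber $C_0$, the bijection $w\mapsto wC_0$ turns the directed tope graph of $\A(W)$ into the Hasse diagram of the right weak order on $W$, and $\indeg(w)$ equals the number of right descents of $w$ in the Coxeter sense. By \Cref{lem: h_poly_from_tope_graph} we obtain
\[
h_{\A(W)}(t) = \sum_{w\in W} t^{\text{des}(w)},
\]
the $W$-Eulerian polynomial. For $W = S_n$ this is the classical Eulerian polynomial and $\gamma$-positivity is the celebrated theorem of Foata and Schützenberger, proved via a valley-hopping involution on $S_n$ that partitions permutations into orbits, each contributing a single summand $t^i(1+t)^{(n-1)-2i}$. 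For $W = B_n$ one adapts the argument to signed permutations via a refined valley-hopping action (Petersen; Postnikov--Reiner--Williams), yielding the same conclusion. For $W = I_2(m)$ the arrangement has $m$ hyperplanes and $2m$ chambers, so $h(t) = 1 + (2m-2)t + t^2 = (1+t)^2 + (2m-4)t$, which is manifestly $\gamma$-positive for all $m\geq 2$.

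What remains are type $D_n$ together with the six exceptional types. The exceptional groups are all finite of moderate order, so one enumerates $W$ in \verb|SageMath| and reads off the $\gamma$-vector of the descent generating function directly. Type $D_n$ is the genuine obstacle: the naive valley-hopping move inherited from type $B$ need not preserve the sign-parity constraint carving out $D_n\subset B_n$. The cleanest resolution is to construct a parity-preserving modification of the Foata--Strehl action on even-signed permutations, or equivalently to exploit an identity expressing the $D_n$-Eulerian polynomial as a linear combination of the $B_n$- and $B_{n-1}$-Eulerian polynomials, from which $\gamma$-positivity is inherited from type $B$. Carrying out this type-$D$ step is where the bulk of the work lies.
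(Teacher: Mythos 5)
The paper never proves this statement itself: it is imported verbatim from \cite{S07}, so the relevant benchmark is Stembridge's proof, which (as the paper remarks) is a case-by-case argument over the classification — structurally the same plan you propose. Your reduction to irreducible types via the join/product of complexes, the identification of $h_{\A(W)}(t)$ with the $W$-Eulerian polynomial through \Cref{lem: h_poly_from_tope_graph}, and the treatment of $A_{n-1}$, $B_n$, $I_2(m)$ and the exceptional groups (which are in any case of rank $\le 8$, with rank $\le 5$ already covered by \Cref{thm: all_simpl_rank5_arr_gam_pos}) are all sound.

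The genuine gap is exactly where you park it: type $D_n$ is announced, not proven, and the fallback route you sketch would fail as stated. The known identity relating the $D_n$-Eulerian polynomial to smaller types is Stembridge's $D_n(t)=B_n(t)-n\,2^{n-1}t\,A_{n-1}(t)$, and the minus sign means $\gamma$-positivity is \emph{not} inherited termwise from type $B$; there is no positive linear combination of $B_n$- and $B_{n-1}$-Eulerian polynomials of the kind you invoke. Likewise, a ``parity-preserving modification of the Foata--Strehl action'' on even-signed permutations is precisely the nontrivial construction that has to be carried out, not postulated — the naive valley-hopping moves do not respect the sign-parity constraint cutting $D_n$ out of $B_n$, which is why this case resisted the type-$B$ methods. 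What \cite{S07} actually does is prove the explicit peak expansions recorded in \Cref{thm:explicitBD}, where the weight $\phi(u)\in\{0,1,2\}$ absorbs the parity condition and makes $\gamma$-nonnegativity visible by inspection; some argument of this strength (or an honest parity-compatible group action) is the missing content. So the proposal is a correct strategy for every type except the one carrying the difficulty, and for that type the step it relies on is either absent or, in the identity-based variant, would not yield the conclusion.
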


However, all known proofs of this fact use the classification of finite reflection groups and work on a case-by-case basis.
Another interesting result which will be important later is also the following:

\begin{thm}[\cite{G05}]
\label{thm: all_simpl_rank5_arr_gam_pos}
    All simplicial arrangements of rank $\leq 5$ are $\gamma$-positive. 
\end{thm}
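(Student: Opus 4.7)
The plan is to combine two ingredients: first, the geometric fact that the simplicial complex $\Delta_\A$ associated to a simplicial arrangement $\A$ of rank $d$ is a flag triangulation of the $(d-1)$-sphere; and second, Gal's theorem \cite{G05} stating that every flag simplicial sphere of dimension at most $4$ is $\gamma$-positive. Given both, the conclusion is immediate: for rank $d\leq 5$, $\Delta_\A$ is a flag sphere of dimension at most $4$, hence $\gamma$-positive.

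The sphere property has already been noted in the preceding discussion, obtained by intersecting the cones of $\A$ with $S^{d-1}$. The main work is to verify the flag property, i.e., that every minimal non-face of $\Delta_\A$ has size two: whenever rays $r_1,\dots,r_k$ of $\A$ are pairwise the ray set of some $2$-dimensional cone of $\A$, the full set $\{r_1,\dots,r_k\}$ is the ray set of some cone of $\A$. I would argue this via sign vectors. Fix a chamber $C_0\in\C(\A)$ and orient every hyperplane so that $C_0$ lies in its positive half-space. Every face of $\A$, including each ray, then corresponds to a sign vector in $\{+,-,0\}^{|\A|}$ recording on which (possibly zero) side of each hyperplane the face sits. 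The hypothesis that each pair $r_i,r_j$ lies in a common cone of $\A$ forbids any hyperplane from strictly separating such a pair, so the nonzero entries of the sign vectors of $r_1,\dots,r_k$ at each fixed hyperplane all agree. The intersection of the corresponding closed half-spaces is therefore a non-empty cone containing every $r_i$; by simpliciality of $\A$ it is itself a face of $\A$ with ray set exactly $\{r_1,\dots,r_k\}$, proving flagness.

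The principal technical difficulty lies in this flag step: the argument is standard in the language of oriented matroids, but care is needed to translate "pairwise co-facial rays" into the sign-vector compatibility that produces a joint cone, and to invoke simpliciality to conclude that the resulting intersection of half-spaces is precisely a face of $\A$ rather than a strictly larger cone. Once this is in place, Gal's theorem applies directly, with the hypothesis $\operatorname{rank}\leq 5$ entering only to match Gal's dimension bound on flag spheres.
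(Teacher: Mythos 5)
The paper does not supply a proof of this theorem at all: it simply cites Gal's result \cite{G05} that flag spheres of dimension at most $4$ are $\gamma$-positive, and notes that the spheres coming from simplicial arrangements are flag, deducing the statement as an immediate consequence. Your proposal reproduces exactly this reasoning and additionally sketches the flagness verification via compatible sign vectors (equivalently, composition of covectors in the underlying oriented matroid), which is a standard argument; the one small overstatement is that the joint face you construct need not have ray set \emph{exactly} $\{r_1,\dots,r_k\}$, but since $\Delta_\A$ is closed under subsets this containment already gives that $\{r_1,\dots,r_k\}$ is a face, so the conclusion still holds.
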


For arrangements of type $A,B,D$ there is a combinatorial interpretation of the entries of the $\gamma$-vector. First, for the arrangement of type $A$, the coefficient $\gamma_k(\A_n)$ is the number of permutations $\sigma\in S_n$ with exactly $k$ descents but no double descents and no final descent, see \cite[Section 11]{PRW08} for a proof.

The types $B$ and $D$ were treated in \cite{S07}. First, we need the notion of a \textbf{peak}.
For any permutation $u\in S_n$, we denote by $p(u)$ the number of peaks, i.e., the number of positions $i\in\{1,\dots,n\}$ such that $u_{i-1}<u_i$ and $u_i>u_{i+1}$, with the convention that $u_0=0$ and $u_{n+1}=n+1$. The following result is the assembly of \cite[Corollary A.2.]{S07} and \cite[Corollary A.5.]{S07}. 

\begin{thm}
\label{thm:explicitBD}
    We have
    \begin{align*}
        h(\B_n)(t)&=\sum_{u\in S_n}(4t)^{p(u)}(1+t)^{n-2p(u)} \text{ and}\\ 
        h(\D_n)(t)&=\frac{1}{2}\sum_{u\in S_n}\phi(u)(4t)^{p(u)}(1+t)^{n-2p(u)},
    \end{align*}
    where
    \[
        \phi(u)=\begin{cases}
        2&\text{ if }u_1<u_2<u_3\\
        0&\text{ if }u_2<u_1<u_3\\
        1&\text{ otherwise}\\
        \end{cases}
    \]
\end{thm}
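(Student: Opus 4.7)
The plan is to apply \Cref{lem: h_poly_from_tope_graph} to the arrangements $\B_n$ and $\D_n$ to reduce both $h$-polynomials to descent-generating polynomials over the corresponding reflection groups, and then recast those polynomials in peak form via a position-by-position analysis. Concretely, the chambers of $\B_n$ correspond to signed permutations $w = (\epsilon_1 u_1, \ldots, \epsilon_n u_n)$ with $u \in S_n$ and $\epsilon \in \{\pm 1\}^n$. Fixing as fundamental chamber $\{x_1 > \cdots > x_n > 0\}$ and orienting all hyperplanes so that it lies on the positive side, a standard check identifies the in-degree of $w$ in the directed tope graph with the type-$B$ descent number $\text{des}_B(w) = |\{i \in \{0, 1, \ldots, n-1\} : w(i) > w(i+1)\}|$, where $w(0) := 0$. \Cref{lem: h_poly_from_tope_graph} then gives $h_{\B_n}(t) = \sum_w t^{\text{des}_B(w)}$. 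The analogous argument, restricted to chambers whose sign vector has an even number of negative entries, yields $h_{\D_n}(t) = \sum_{w} t^{\text{des}_D(w)}$ for the corresponding type-$D$ descent statistic.

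The central step is to re-sum these descent polynomials into peak form. For fixed $u \in S_n$, summing over the $2^n$ sign vectors $\epsilon$, I would analyze the descent contribution position by position. With the convention $u_0 = 0$, $u_{n+1} = n+1$, the key local statement is: at a peak position $i$ of $u$ all four choices of $(\epsilon_{i-1}, \epsilon_i)$ force a descent in $\epsilon u$ at position $i-1$ or $i$, contributing a factor $4t$; at a non-peak position the four sign choices split evenly into ascents and descents, contributing $2(1+t)$. Aggregating across all positions yields $(4t)^{p(u)}(1+t)^{n-2p(u)}$ and proves the $\B_n$ formula. For $\D_n$ the argument is identical except at the initial segment, where the parity constraint on $\epsilon$ and the absence of coordinate hyperplanes couple; a case analysis on the relative order of $u_1, u_2, u_3$ then recovers the three values of the correction factor $\phi(u)$ exactly as in the statement, and the global factor of $\tfrac{1}{2}$ accounts for the fact that $\D_n$ has half as many chambers as $\B_n$.

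The main technical obstacle is the local-to-global peak factorization: carefully matching the peak boundary conventions with the signed descent conventions and, in the $\D_n$ case, correctly accounting for the parity constraint on the sign vector. This bookkeeping is precisely the content of Stembridge's argument in \cite[Corollary A.2 and Corollary A.5]{S07}, whose treatment I would follow.
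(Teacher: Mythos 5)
The paper does not actually prove this theorem: it is stated as an assembly of \cite[Corollary A.2.]{S07} and \cite[Corollary A.5.]{S07}, i.e.\ quoted from Stembridge without proof. Your proposal ultimately rests on the same source — after identifying $h_{\B_n}$ and $h_{\D_n}$ with signed-descent generating functions and sketching the re-summation into peak form, you defer the decisive local-to-global factorization and the type-$D$ parity bookkeeping to exactly those corollaries — so this is essentially the paper's approach; the only caution is that your per-position heuristic as literally stated (an independent factor $2(1+t)$ at every non-peak position) does not multiply out to $(4t)^{p(u)}(1+t)^{n-2p(u)}$, since adjacent positions share sign variables and each peak absorbs two positions, which is precisely the bookkeeping carried out in the cited argument.
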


For type $B$, this implies
\[
    \gamma_k=4^k\cdot |\{u\in S_n:p(u)=k\}|
\]
giving the aforementioned combinatorial interpretation. The type $D$ is similar, though slightly more cumbersome because of the function $\phi$.

\subsection{Arithmeticity of the $\gamma$-vectors for $\D_{n,s}$-arrangements}\label{subsec:arthmeticity_gamma}

We apply results of Section \ref{sec: arithmeticity-of-matroid-invariants} to the $\gamma$-vector. We also determine an explicit expression for this increment in terms of the number of permutations with a certain number of maxima.

\begin{theorem}
    \label{gammaArithemtic}
    For each $n$ and $k$, the values $\gamma_k(\D_{n,s})$ form an arithmetic sequence with respect to $s$, meaning that the increment $\gamma_k(\D_{n,s+1})-\gamma_k(\D_{n,s})$ is independent of $s$.
\end{theorem}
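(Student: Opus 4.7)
The plan is to first prove that $f_k(\D_{n,s+1})-f_k(\D_{n,s})$ is independent of $s$, and then pass from the $f$-vector to the $\gamma$-vector by linearity.

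Since $\D_{n,s+1} = \D_{n,s}\cup\{H_{s+1}\}$, Lemma~\ref{faceBijection} gives
\[
f_k(\D_{n,s+1})-f_k(\D_{n,s}) = n_k(s)+n_{k-1}(s),
\]
where $n_k(s)$ is the number of $k$-dimensional faces of $\D_{n,s+1}$ contained in $H_{s+1}$ that are not already faces of $\D_{n,s}$. I would then write $n_k(s)=A(s)-B(s)$, where $A(s)$ counts the $k$-faces of $\D_{n,s+1}$ lying in $H_{s+1}$ and $B(s)$ counts the $k$-faces of $\D_{n,s}$ lying in $H_{s+1}$. Applying Lemma~\ref{faceTransfer2} with $S=\{1,\dots,s+1\}$ and $S'=\{s+1\}$ identifies $A(s)$ with the number of $k$-faces of $\D_{n,\{s+1\}}$ contained in $H_{s+1}$, while Lemma~\ref{faceTransfer1} with $S=\{1,\dots,s\}$ and $S'=\emptyset$ identifies $B(s)$ with the number of $k$-faces of $\D_n$ contained in $H_{s+1}$.

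Next I would use coordinate symmetry to show both counts are independent of $s$. For any transposition $(i\ j)\in S_n$, the corresponding coordinate swap is an orthogonal automorphism of $\R^n$ that preserves $\D_n$ setwise, sends $\D_{n,\{i\}}$ to $\D_{n,\{j\}}$, and maps $H_i$ to $H_j$. Consequently, both $A(s)$ and $B(s)$ depend only on the fact that one counts faces inside some coordinate hyperplane and not on which one, so each is constant in $s$. Hence $n_k(s)$, and therefore $f_k(\D_{n,s+1})-f_k(\D_{n,s})$, is independent of $s$ for every $k$.

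Finally, every $\D_{n,s}$ with $0\le s\le n$ is a rank-$n$ simplicial arrangement, so the associated simplicial spheres all have dimension $n-1$ and share the same linear substitution $h(t)=f(t-1)$ and $\gamma$-basis change $h(t)=\sum_i\gamma_i\,t^i(1+t)^{n-2i}$. Since $h$-vectors of simplicial arrangements are symmetric, the difference $h(\D_{n,s+1})-h(\D_{n,s})$ is again symmetric and thus has a well-defined $\gamma$-vector, which by linearity equals $\gamma(\D_{n,s+1})-\gamma(\D_{n,s})$. Applying both linear transformations to the $s$-independent increment of the $f$-vector yields an $s$-independent increment of the $\gamma$-vector. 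The only subtle step is the reduction via the two face-transfer lemmas together with the $S_n$-symmetry; everything after that is routine linear algebra.
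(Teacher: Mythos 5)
Your proposal is correct and follows essentially the same route as the paper's proof: reduce to arithmeticity of the $f$-vectors by linearity of the $f\to h\to\gamma$ transformations, apply Lemma~\ref{faceBijection} to the added coordinate hyperplane, use Lemmas~\ref{faceTransfer1} and~\ref{faceTransfer2} to transfer the new faces in $H_{s+1}$ to the arrangements $\D_{n,\{s+1\}}$ and $\D_n$, and conclude by coordinate-permutation symmetry. The only cosmetic difference is that you count the new faces as a difference $A(s)-B(s)$ rather than exhibiting the paper's explicit bijection $N_k^i\leftrightarrow \hat N_k^i$, which is an equivalent bookkeeping of the same argument.
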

\begin{proof}
    It is sufficient to prove that $f_k(\D_{n,i})$ increases arithemtically with $i$, since for fixed $n$, the transformations to the $h$-vector and to the $\gamma$-vector are both linear.
    
    Let $\A$ be any hyperplane arrangement. By Zaslavsky's formula (\cite{Z75}, Theorem A), number of faces of dimension $k$ contained in a flat $x\in \L(\A)$ of rank $k$ is
    $$
        \sum_{y\geq x} (-1)^{\rk([x,y])}\mu([x,y]),
    $$
    which is a union additive function by \Cref{prop:sum-over-simple-products} applied to the lattice $[x,1]_{\L(A)}$. Then
    \begin{align*}
        f_k(\L(\A)) &=\sum_{x\in \L(\A),\,\rk(x) = k} \sum_{y\geq x} (-1)^{\rk([x,y])}\mu([x,y]) \\
        &=\sum_{x\in \L(\A)}\left( \delta_{\rk=k}\cdot \sum_{y\geq x} (-1)^{\rk([x,y])}\mu([x,y])\right)
    \end{align*}   
    is union additive by \Cref{prop:sum-over-simple-products}, too. Proposition \ref{prop:arithmeticity-for-Dns} finishes the proof.
\end{proof}

\begin{corollary}
\label{coro: Dns_gam_pos}
    The arrangements $\D_{n,s}$ are $\gamma$-positive.
\end{corollary}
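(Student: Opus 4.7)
The plan is to combine the arithmeticity result of Theorem \ref{gammaArithemtic} with the known $\gamma$-positivity of the reflection arrangements of types $\D_n$ and $\B_n$. The key observation is that the two endpoints of the intermediate family satisfy $\D_{n,0}=\D_n$ and $\D_{n,n}=\B_n$, both of which are reflection arrangements and therefore $\gamma$-positive by Theorem \ref{thm: all_ref_arr_gam_pos}.

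First I would invoke Theorem \ref{gammaArithemtic} to conclude that, for each fixed $n$ and $k$, the map $s\mapsto \gamma_k(\D_{n,s})$ is affine-linear on $\{0,1,\ldots,n\}$. An affine-linear function is determined by its values at the endpoints, so we obtain the interpolation formula
\[
\gamma_k(\D_{n,s}) \;=\; \frac{n-s}{n}\,\gamma_k(\D_n) \;+\; \frac{s}{n}\,\gamma_k(\B_n).
\]
Since $0\le s\le n$, the coefficients $(n-s)/n$ and $s/n$ are nonnegative and sum to $1$, so the right-hand side is a convex combination of two nonnegative numbers, hence nonnegative. This gives $\gamma_k(\D_{n,s})\ge 0$ for all $k$.

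There is essentially no obstacle at this stage: the substantive combinatorial work has already been done in Theorem \ref{gammaArithemtic} via the bijective face arguments of Lemmas \ref{faceBijection}--\ref{faceTransfer2}, and the $\gamma$-positivity of $\D_n$ and $\B_n$ is the classical result of Stembridge quoted as Theorem \ref{thm: all_ref_arr_gam_pos}. The corollary is therefore an immediate payoff of combining these two ingredients, with no additional case analysis required.
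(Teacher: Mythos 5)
Your proposal is correct and essentially the same as the paper's argument: both deduce from Theorem \ref{gammaArithemtic} that $\gamma_k(\D_{n,s})$ lies between its values at the endpoints $\D_n$ and $\B_n$ (you phrase this as a convex combination, the paper as a bound by the minimum) and then conclude from the known $\gamma$-positivity of these reflection arrangements. The only cosmetic difference is that the paper justifies endpoint positivity via the explicit peak formulas of Theorem \ref{thm:explicitBD}, while you cite Theorem \ref{thm: all_ref_arr_gam_pos}; both rest on the same result of Stembridge.
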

\begin{proof}
    By Theorem \ref{gammaArithemtic}, we have 
    \[\gamma_k(\D_{n,i})\geq \min \{\gamma_k(\D_{n,n}),\gamma_k(D_{n})\}\] 
    for all $i=0,\dots,n$. In fact, we know from the explicit formula in Theorem \ref{thm:explicitBD} that $\gamma_k(\D_{n})\leq \gamma_k(\B_{n})$, so we have $\gamma_k(\D_{n,i})\geq\gamma_k(\D_{n})$.
    Since we also know from Theorem \ref{thm:explicitBD} that the $\gamma$-vector for $D_n$ is positive, the claim follows.
\end{proof}

In the following, we express the increment in  terms of the number of so-called maxima of a permutation.

\begin{definition}
  Let $u = (u_1,\dots, u_n) \in S_n$ be a permutation in one-line notation and set $u_0=u_{n+1} = 0$. Then $m(u)$ is defined as the number of \textbf{maxima}, i.e. the number of $i \in \lbrace 1,\dots ,n\rbrace$ such that $u_{i-1} < u_i > u_{i+1}$.
\end{definition}

The notion of a maximum differs from that of a peak only by the conditions on $u_0$ and $u_{n+1}$.

\begin{definition}
    Let $u = (u_1,\dots, u_n) \in S_n$ be a permutation and set $u_0=0$, $u_{n+1} =n+1$. Then $p(u)$ is the number of \textbf{peaks}, i.e. the number of $i \in \lbrace 1,\dots ,n\rbrace$ such that $u_{i-1} < u_i > u_{i+1}$.
\end{definition}

We find the increment of the $\gamma$-vector by computing $h(\B_n,t) - h(\mathcal D_{n,n-1}, t)$. We prove the following result after proving a few auxiliary lemmata.
\begin{proposition}
\label{prop:explicit-increment}
The following holds
  \[
    h(\B_n)(t) - h(\mathcal D_{n,n-1})(t) = \frac 1 2 \sum_{u'\in S_{n-1}}(4t)^{m(u')}(1+t)^{n-2m(u')}.
  \]
\end{proposition}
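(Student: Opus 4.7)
The plan is to combine the arithmeticity established in Theorem \ref{gammaArithemtic} with the explicit formulas from Theorem \ref{thm:explicitBD}. By arithmeticity — the $h$-polynomials $h(\D_{n,s},t)$ form an arithmetic progression in $s$ since the transitions $f \leftrightarrow h \leftrightarrow \gamma$ are linear — the common increment equals
\[
h(\B_n,t) - h(\D_{n,n-1},t) \;=\; \tfrac{1}{n}\bigl[h(\B_n,t) - h(\D_n,t)\bigr].
\]
Plugging in Theorem \ref{thm:explicitBD} gives
\[
h(\B_n,t) - h(\D_n,t) \;=\; \tfrac{1}{2}\sum_{u \in S_n}\bigl(2 - \phi(u)\bigr)(4t)^{p(u)}(1+t)^{n-2p(u)},
\]
so the proposition reduces to the combinatorial identity
\[
\sum_{u \in S_n}\bigl(2-\phi(u)\bigr)(4t)^{p(u)}(1+t)^{n-2p(u)} \;=\; n\sum_{u' \in S_{n-1}}(4t)^{m(u')}(1+t)^{n-2m(u')}.
\]

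To attack this identity, I would first observe that the map $u' \mapsto (u'_1+1,\ldots,u'_{n-1}+1,1)$ is a bijection from $S_{n-1}$ onto $\{u \in S_n : u_n = 1\}$ satisfying $m(u') = p(u)$; the verification is a direct case check comparing the conventions $u_{n+1}=n+1$ (for peaks) and $u'_n = 0$ (for maxima) at the positions near the inserted $1$. Since both sides of the identity depend on $u$ or $u'$ only through the value of the statistic $p$ or $m$, this reduces the identity to the coefficient-wise equality
\[
\sum_{u \in S_n,\, p(u) = k}\bigl(2-\phi(u)\bigr) \;=\; n\cdot |\{u \in S_n : u_n = 1,\ p(u) = k\}|
\]
for every $k$. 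The involution $(u_1,u_2,u_3,u_4,\ldots) \mapsto (u_2,u_1,u_3,u_4,\ldots)$ provides a first useful tool: a direct case analysis shows that it carries permutations with $u_1<u_2<u_3$ (where $2-\phi = 0$) onto those with $u_2<u_1<u_3$ (where $2-\phi = 2$), while increasing the peak count by exactly one. Writing $A_k$ for the number of $u \in S_n$ with $u_1<u_2<u_3$ and $p(u)=k$, and $P_k = |\{u \in S_n : p(u) = k\}|$, this rewrites the left-hand side as $P_k + A_{k-1} - A_k$.

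The main obstacle is then to prove the simplified identity $P_k + A_{k-1} - A_k = n\,R_k$, where $R_k = |\{u' \in S_{n-1} : m(u') = k\}|$. My approach would be an insertion argument: each $u \in S_n$ is parametrised by a pair $(u',j) \in S_{n-1}\times [n]$, where $j$ records the position of the entry $1$ in $u$ and $u'$ is obtained by removing $1$ and decreasing the larger entries by $1$. One then compares the peak statistic of $u$ with the maxima statistic of $u'$ position by position: for interior insertion positions $j\in\{2,\ldots,n-1\}$ the two statistics agree in a controlled way, whereas for the boundary positions $j = 1$ and $j = n$ the differing boundary conventions of $p$ and $m$ interact nontrivially with the inserted $1$. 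A careful bookkeeping of these boundary contributions should produce exactly the correction $A_k - A_{k-1}$, and tracking this correction precisely is the most delicate step of the proof.
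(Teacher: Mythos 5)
Your route is genuinely different from the paper's (which computes $h(\D_{n,n-1},t)$ directly by pairing the $\B_n$-regions whose walls lie in $\{x_n=0\}$, i.e.\ those with $u_1=n$, and using a horizontal-flip lemma to convert peaks into maxima), and your initial reductions are all correct: by \Cref{gammaArithemtic} the increment is $\tfrac1n\bigl(h(\B_n,t)-h(\D_n,t)\bigr)$, the formula from \Cref{thm:explicitBD} turns the claim into the identity $\sum_{u\in S_n}(2-\phi(u))(4t)^{p(u)}(1+t)^{n-2p(u)}=n\sum_{u'\in S_{n-1}}(4t)^{m(u')}(1+t)^{n-2m(u')}$, the insertion-of-$1$ bijection onto $\{u\in S_n: u_n=1\}$ with $p(u)=m(u')$ is valid, and the swap of the first two entries does carry $\{u_1<u_2<u_3\}$ onto $\{u_2<u_1<u_3\}$ while raising $p$ by exactly one, so the claim correctly reduces to $P_k+A_{k-1}-A_k=nR_k$ (and the passage to coefficients is legitimate since the polynomials $(4t)^k(1+t)^{n-2k}$ are linearly independent).

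However, the proof stops exactly at the crux: the identity $P_k+A_{k-1}-A_k=nR_k$ is asserted, not proven, and the sketched insertion bookkeeping is misdescribed in a way that would not go through as stated. You claim that for interior insertion positions $j\in\{2,\dots,n-1\}$ the statistics "agree in a controlled way" and only $j=1,n$ are problematic; this is false. The mismatch of right-boundary conventions ($u_{n+1}=n+1$ for peaks versus $u'_n=0$ for maxima) destroys the final maximum of $u'$ for \emph{every} insertion position $j<n$ whenever $u'_{n-2}<u'_{n-1}$, and interior insertions can also create a new peak (insert $1$ inside an ascending run): e.g.\ $u'=213$ with $m(u')=2$ gives $u=3124$ with $p(u)=1$ at the interior position $j=2$. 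Moreover the needed cancellation is genuinely global rather than per-permutation: for $n=4$ the insertion sums $\sum_j x^{p(u^{(j)})}-4x^{m(u')}$ equal $1-x$, $-x+x^2$, or $3x-3x^2$ depending on $u'$, and only their total over $S_3$ equals $\sum_k(A_k-A_{k-1})x^k$. So the "careful bookkeeping" you defer is not a routine local case check but the actual content of the proof, and without it the argument is incomplete. (For comparison, the paper avoids this entirely by working with regions and separating walls, so that only permutations with $u_1=n$ contribute to the difference, each with weight $\tfrac12$.)
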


As a direct consequence of \Cref{prop:explicit-increment}, we obtain the explicit increments of the entries of the $\gamma$-vector.

\begin{corollary}
    For any $n\in \mathbb N$, $i\in [i]$ and $k \in \lbrace 0,\dots, \lfloor \frac n 2\rfloor \rbrace$
    \[
        \gamma_k(\D_{n,i}) - \gamma_k(\D_{n,i-1}) = \frac 1 2\cdot |\lbrace u\in S_{n-1}: m(u) = k\rbrace|\cdot 4^{k}.
    \]    
\end{corollary}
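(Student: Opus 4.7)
The plan is to combine the arithmeticity theorem (\Cref{gammaArithemtic}) with the explicit $h$-polynomial increment from \Cref{prop:explicit-increment}, which immediately delivers the $\gamma$-coefficient increment once we observe that the right-hand side of \Cref{prop:explicit-increment} is already written in the $\gamma$-basis.

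First, by \Cref{gammaArithemtic}, the increment $\gamma_k(\D_{n,i}) - \gamma_k(\D_{n,i-1})$ does not depend on $i$. Hence it is enough to compute it for the single choice $i = n$, i.e.\ to evaluate $\gamma_k(\B_n) - \gamma_k(\D_{n,n-1})$, since $\D_{n,n} = \B_n$. Since passing from the $h$-polynomial to the $\gamma$-vector is a linear operation, this $\gamma$-difference is exactly the $\gamma$-vector of the polynomial $h(\B_n,t) - h(\D_{n,n-1},t)$.

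Next, \Cref{prop:explicit-increment} gives
\[
h(\B_n,t) - h(\D_{n,n-1},t) = \frac{1}{2}\sum_{u' \in S_{n-1}} (4t)^{m(u')} (1+t)^{n-2m(u')}.
\]
Collecting terms by the value $k = m(u')$, this rewrites as
\[
h(\B_n,t) - h(\D_{n,n-1},t) = \sum_{k \geq 0} \left( \frac{1}{2} \cdot |\{u' \in S_{n-1} : m(u') = k\}| \cdot 4^k \right) t^k (1+t)^{n-2k},
\]
which is exactly the expansion in the $\gamma$-basis. Reading off the coefficient of $t^k(1+t)^{n-2k}$ yields the claimed value of $\gamma_k(\D_{n,i}) - \gamma_k(\D_{n,i-1})$.

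There is no real obstacle here: the only subtlety is verifying that the symmetric sum on the right-hand side of \Cref{prop:explicit-increment} is genuinely in $\gamma$-form (the exponents $n - 2m(u')$ need to be nonnegative, which holds because a permutation in $S_{n-1}$ has at most $\lfloor n/2 \rfloor$ maxima under the convention $u_0 = u_n = 0$), so the reading off is legitimate and the corollary follows immediately.
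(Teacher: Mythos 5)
Your proof is correct and follows exactly the route the paper intends: the paper presents the corollary as a "direct consequence" of Proposition~\ref{prop:explicit-increment}, and your argument simply spells out that consequence — invoking Theorem~\ref{gammaArithemtic} to reduce to the $i=n$ increment, noting that $h \mapsto \gamma$ is linear, and reading off coefficients from the right-hand side of Proposition~\ref{prop:explicit-increment} since it is already written in the $\gamma$-basis (with the sanity check $m(u') \le \lfloor n/2 \rfloor$ ensuring nonnegative exponents).
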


\begin{definition}
  For $u\in S_n$ we define the \textbf{horizontal flip}, denoted by $\hf(u)$, as the permutation $(n+1-u_1, \dots, n+1 - u_n)$.
\end{definition}

\begin{definition}
  Let $u = (u_1,\dots, u_n) \in S_n$ be a permutation. We call $i\in \lbrace 2,3,\dots , n-1 \rbrace$ a \textbf{valley} if $u_{i-1} > u_i < u_{i+1}$.
\end{definition}

\begin{lemma}
  Consider $u\in S_n$ such that $u_1 = n$, and denote $u'=(u_2,\dots ,u_{n})\in S_{n-1}$. Then $p(u) = m (\hf(u'))$.
\end{lemma}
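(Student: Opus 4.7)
The plan is to show that both $p(u)$ and $m(\hf(u'))$ equal the number of \emph{valleys} of $u$ at positions $j\in\{2,\dots,n\}$, where a valley at $j$ means $u_{j-1}>u_j<u_{j+1}$ in the extension $u_0=0$, $u_{n+1}=n+1$.

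The first step is to compute $m(\hf(u'))$ directly. Set $w=\hf(u')$, so that $w_j=n-u_{j+1}$ for $j=1,\dots,n-1$ under the maximum convention $w_0=w_n=0$. For an interior position $i\in\{2,\dots,n-2\}$, the condition $w_{i-1}<w_i>w_{i+1}$ translates to $u_i>u_{i+1}<u_{i+2}$, which is the valley of $u$ at position $i+1\in\{3,\dots,n-1\}$. At $i=1$ the inequality $w_0<w_1$ is automatic (since $u_2<n$), while $w_1>w_2$ becomes $u_2<u_3$; combined with $u_1=n>u_2$ this is the valley at position $2$. Symmetrically, at $i=n-1$ the inequality $w_{n-1}>w_n$ is automatic and $w_{n-2}<w_{n-1}$ becomes $u_{n-1}>u_n$; combined with $u_n<n+1=u_{n+1}$ this is the valley at position $n$. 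Hence $m(\hf(u'))$ counts exactly the valleys of $u$ in $\{2,\dots,n\}$.

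The second step recasts $p(u)$ via a sign string. Define $s_j\in\{+,-\}$ for $j=0,\dots,n$ by $s_j=+$ iff $u_j<u_{j+1}$. Peaks of $u$ at $j\in\{1,\dots,n\}$ correspond to the pattern $(s_{j-1},s_j)=(+,-)$ and valleys to $(-,+)$. The hypothesis $u_1=n$ gives $s_0=+$, and the peak convention $u_{n+1}=n+1$ gives $s_n=+$ (since $u_n<n+1$); as the sign string starts and ends with $+$, its numbers of $(+,-)$ and $(-,+)$ transitions coincide. Since $s_0=+$ already forbids $j=1$ from being a valley, $p(u)$ equals the number of valleys of $u$ in $\{2,\dots,n\}$. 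Combining both steps yields $p(u)=m(\hf(u'))$.

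The only delicate piece is the boundary analysis of the maxima at $i=1$ and $i=n-1$, where only one of the two strict inequalities is nontrivial and must be supplied by the values $u_1=n$ and $u_{n+1}=n+1$ to match exactly the valley conditions at positions $2$ and $n$. Beyond this bookkeeping, the argument reduces to a routine sign count.
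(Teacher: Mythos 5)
Your proof is correct and follows essentially the same route as the paper: both arguments reduce the identity to bookkeeping of local extrema under the horizontal flip, with the same boundary analysis forced by $u_1=n$ and the peak convention $u_{n+1}=n+1$. The only cosmetic difference is that you equate both quantities to the number of valleys of $u$ in positions $2,\dots,n$ (using a sign-change balance to handle $p(u)$), while the paper uses the observation that a permutation has one more maximum than it has valleys; the underlying content is identical.
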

\begin{proof}
  Between every two maxima of $u'$ is a valley which turns into a peak after a horizontal flip. Since for peaks we let $u_{n+1} = n+1$ and $u_1 = n$ by assumption, $u_1$ and $u_{n+1}$ are both greater than $u_i$ for all $2\leq i\leq n$. Hence, there are no other peaks in $u$ on positions $2,\dots,n$.
  
  For any permutation, every valley is between two maxima. So, there is one less valley than maxima in $u'$. Since there is always a peak in $u$ at the position $1$, the claim follows.
\end{proof}

Let $u\in S_n$ be a permutation and $\sigma\in\lbrace 1,-1\rbrace ^n $ be a sign vector. Then we call $w = (\sigma_1u_1,\dots, \sigma_nu_n)$ a \textbf{signed permutation}. See the appendix of \cite{S07} for the correspondence between signed permutations and element of the Weyl group of type $B$ and $D$, which in turn correspond to regions in the hyperplane arrangements.
For a signed permutation $w$, we denote by $R_{w}^B$ the associated region in $\B_n$, and if the number of $-1$ in $\sigma$ is even, we denote by $R_w^D$ the corresponding region in $\D_n$, that is the region which contains the point $(\sigma_{v_1}v_1,\dots, \sigma_{v_n}v_n)$, where $v = u^{-1}$ as permutations.

In the following, let us fix the base region $R_0$ corresponding to $(1,2,\dots , n)$. We analyze the contribution of regions to the $h$-vector with respect to $R_0$.

\begin{definition}
  For a region $R$ in a hyperplane arrangement $\A$ we denote by $h(\A,R)$ the power of $t$ with which $R$ contributes to the $h$ polynomial with respect to the base region $R_0$.
\end{definition}

By \Cref{lem: h_poly_from_sep_chambers}, $h(\A,R)$ is the number of walls of $R$ in $\A$ which separate $R$ from the point $(1,2,\dots,n)$.

\begin{definition}\label{def: descents-B-and-D}
  For a signed permutation $w$, we denote by $D_B(w)$ the set of $i\in [n]$ such that $w_{i-1} > w_i$, where we set $w_0 = 0$. If the number of minus signs in $w$ is even, we denote by $D_D(w)$ a subset of $[n]$, such that $1\in D_D(w)$ if $-w_1 > w_2$ and for $i>1$, $i\in D_D(w)$ if $w_{i-1} > w_i$.
\end{definition}

Then $|D_B(w)| = h(B,R_w)$ and $|D_D(w)| = h(D,R_w)$, see appendix in \cite{S07}.

The hyperplane $\lbrace x_n = 0 \rbrace$ contains wall of exactly those regions whose canonical point $(\sigma_{v_1}v_1, \dots, \sigma_nv_n)$ satisfies $v_n = 1$, that is $u_1 = (v^{-1})_1 = n$. Hence, any region $R_{\sigma u}^B$ such that $u_1\neq n$ is in $\D_{n,n-1}$ as well, since the hyperplane $\lbrace x_n = 0 \rbrace$ does not contain any of its walls. Then $h(\D_{n,n-1},R_{\sigma u}^B) = h(B_{n}, R_{\sigma u}^B)$, since this is a question of how many walls of $R_{\sigma u}^B$ separate it from the point $(1,2,\dots,n)$.

Let $u\in S_n$ be such that $u_1 = n$, let $\sigma,\tau\in \lbrace 1,-1\rbrace^n$ be such that they differ only in the first coordinate and $\sigma_1 = 1$, $\tau_1 = -1$. Then there is the region $R:=R_{\sigma u}\cup R_{\tau u}$ in $\D_{n,n-1}$, since $R_{\sigma u}$ and $R_{\tau u}$ share a wall in the hyperplane $\lbrace x_n = 0\rbrace$. 

\begin{lemma}
  In the setting above, $h(\B_n,R_{\sigma u}^B) = h(\B_n, R_{\tau u}^B) = h(\D_{n,n-1},R)$.
\end{lemma}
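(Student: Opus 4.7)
The plan is to establish the two equalities in turn. For the first, $h(\B_n, R_{\sigma u}^B) = h(\B_n, R_{\tau u}^B)$, I would invoke the descent identity $h(\B_n, R_w^B) = |D_B(w)|$ recorded just before the lemma. Since $u_1 = n$, every other entry of $u$ lies in $\{1, \ldots, n-1\}$. In $\sigma u = (n, u_2, \ldots, u_n)$, position $1$ is not a descent (as $0 < n$) while position $2$ is (as $n > u_2$); in $\tau u = (-n, u_2, \ldots, u_n)$ the roles flip ($0 > -n$, but $-n < u_2$). At positions $i \ge 3$ the entries of $\sigma u$ and $\tau u$ agree, so their descent contributions coincide. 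Therefore $|D_B(\sigma u)| = |D_B(\tau u)|$.

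For the second equality, $h(\D_{n,n-1}, R) = h(\B_n, R_{\sigma u}^B)$, I would set up a wall-counting argument. Let $W_\sigma, W_\tau \subseteq \B_n$ be the wall sets of $R_{\sigma u}^B$ and $R_{\tau u}^B$, both of cardinality $n$ by simpliciality of $\B_n$, and both containing $\{x_n = 0\}$. In $\D_{n,n-1}$ the hyperplane $\{x_n = 0\}$ is removed, so $R$ is a single chamber whose walls are $W_R = (W_\sigma \cup W_\tau) \setminus \{\{x_n = 0\}\}$. Simpliciality of $\D_{n,n-1}$ forces $|W_R| = n$, whence $|W_\sigma \cup W_\tau| = n+1$ and $|W_\sigma \cap W_\tau| = n-1$. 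Consequently $W_\sigma \setminus W_\tau = \{H_\sigma\}$ and $W_\tau \setminus W_\sigma = \{H_\tau\}$ are singletons, and the remaining $n-2$ common walls are all different from $\{x_n = 0\}$.

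I would then decompose each $h$-value via \Cref{lem: h_poly_from_sep_chambers}. Because $\sigma, \tau$ differ only in the first coordinate, the canonical points of $R_{\sigma u}^B$ and $R_{\tau u}^B$ agree in every coordinate except the $n$-th, so the two chambers lie on the same side of every hyperplane of $\B_n$ other than $\{x_n = 0\}$. Letting $X$ count common walls (other than $\{x_n = 0\}$) on which the chambers lie on the negative side, and $s_\sigma, s_\tau \in \{0,1\}$ record the sides of $H_\sigma, H_\tau$, one reads off
\[
h(\B_n, R_{\sigma u}^B) = X + s_\sigma, \quad h(\B_n, R_{\tau u}^B) = X + 1 + s_\tau, \quad h(\D_{n,n-1}, R) = X + s_\sigma + s_\tau,
\]
using that $\{x_n=0\}$ contributes $0$ to $h(\B_n, R_{\sigma u}^B)$ and $1$ to $h(\B_n, R_{\tau u}^B)$ by the chosen signs $\sigma_1=1$, $\tau_1=-1$. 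The first equality forces $s_\sigma - s_\tau = 1$, hence $s_\sigma = 1$ and $s_\tau = 0$; substituting yields $h(\D_{n,n-1}, R) = X + 1 = h(\B_n, R_{\sigma u}^B)$.

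The main obstacle I anticipate is the wall bookkeeping in the second part: verifying that $R_{\sigma u}^B$ and $R_{\tau u}^B$ really do lie on the same side of every hyperplane apart from $\{x_n = 0\}$, and that the simpliciality cardinality count pins down $H_\sigma$ and $H_\tau$ as unique exceptional walls. Once these geometric facts are in hand, the identities drop out formally from the descent-count equality of the first part.
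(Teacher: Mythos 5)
Your first equality is proved exactly as in the paper: compare $D_B(\sigma u)$ and $D_B(\tau u)$ and observe that, because $u_1=n$, they differ only in positions $1$ and $2$, each containing exactly one of them. For the second equality, however, you take a genuinely different route. The paper stays combinatorial: it notes that $R$ is also a chamber of $\D_n$ (each $\D_n$-chamber is cut by exactly one coordinate hyperplane), identifies $R$ with one of the signed permutations $\sigma u$ or $\tau u$, and compares $D_D(w)$ with the two $B$-descent sets, using the cited identities $|D_B(w)|=h(\B_n,R_w)$ and $|D_D(w)|=h(\D_n,R_w)$. You instead argue geometrically from \Cref{lem: h_poly_from_sep_chambers}: simpliciality of $\B_n$ and of $\D_{n,n-1}$ gives $|W_\sigma|=|W_\tau|=|W_R|=n$, which pins down unique exceptional walls $H_\sigma,H_\tau$, and then the first equality forces $s_\sigma=1$, $s_\tau=0$, yielding $h(\D_{n,n-1},R)=X+1$. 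This bootstrapping is a nice trick and avoids the type-$D$ descent identity and the observation that $R$ is a $\D_n$-chamber altogether; the price is the wall bookkeeping you flag, all of which does go through: adjacent chambers differ only in the sign of the hyperplane supporting their shared wall (alternatively, the canonical points have $i$-th coordinate of absolute value $\ge 2$ for $i<n$, so the hyperplanes $x_i\pm x_n=0$ cannot separate them); the wall set of $R$ is $(W_\sigma\cup W_\tau)\setminus\{\{x_n=0\}\}$ because facets of the union come from facets of the two pieces and the two chambers share no wall outside $\{x_n=0\}$; and $\D_{n,n-1}$ is indeed simplicial, being a restriction of a reflection arrangement, so $R$ has exactly $n$ walls. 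With those routine verifications written out, your argument is correct and self-contained, if somewhat longer than the paper's descent-set comparison.
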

\begin{proof}
  Since the interior of every region in $\D_n$ is intersected exactly by one coordinate hyperplane and $R$ is intersected by $\lbrace x_n=0\rbrace$, the region $R$ is also in $\D_n$. It corresponds in $\D_n$ to one of the signed permutations $\sigma u$, $\tau u$, let us denote this permutation $w$. We only need to compare the following three sets: $D_B(\sigma u)$, $D_B(\tau u)$ and $D_D(w)$. Using \Cref{def: descents-B-and-D}, we see that those sets can differ only in the containment of 1 and 2.
  
  Since $u_1 = n$, $1\not\in D_B(\sigma u)$, $2\in D_B(\sigma u)$, $1\in D_B(\tau u)$, $2\not\in D_\B(\tau u)$. If $R$ corresponds to $\sigma u$, $D_D(\sigma u)$ contains $2$ and not $1$. Otherwise, $D_D(\tau u)$ contains $1$ and not $2$. Each of the sets contains exactly one of 1 and 2, so their sizes are equal, as desired.
\end{proof}

\begin{proof}[Proof of Proposition \ref{prop:explicit-increment}]
  In the view of the two lemmas above and \Cref{thm:explicitBD}
  \begin{align*}
       h(\D_{n,n-1})(t) = &\sum_{R\in \D_{n,n-1}} t^{h(\D_{n,n-1},R)}=\\
    =& \sum_{u\in S_n, u_1\neq n} (4t)^{p(u)}(1+t)^{n-2p(u)} + \frac 1 2 \sum_{u\in S_n, u_1=n} (4t)^{p(u)}(1+t)^{n-2p(u)}, 
  \end{align*}
  and
  \begin{align*}
        \sum_{u\in S_n, u_1=n} (4t)^{p(u)}(1+t)^{n-2p(u)} &= \sum_{u'\in S_{n-1}} (4t)^{m(\hf(u'))}(1+t)^{n-2m(\hf(u'))}\\
        &= \sum_{u'\in S_{n-1}} (4t)^{m(u')}(1+t)^{n-2m(u')}.
   \end{align*}
   
  Moreover
  \[
    h(\B_n)(t) = \sum_{u\in S_n, u_1\neq n} (4t)^{p(u)}(1+t)^{n-2p(u)} + \sum_{u\in S_n, u_1=n} (4t)^{p(u)}(1+t)^{n-2p(u)}, 
  \]
  hence we obtain the desired result.
\end{proof}

We end this subsection with some illustrative examples of $\gamma$-vectors in small dimensions.

\begin{ex}
    Let $n\in\{3,\dots,6\}$ and $s\in\{0,\dots,6\}$. By using \verb|SageMath| we compute the $\gamma$-vectors of the arrangements $\D_{n,s}$, shown in below \Cref{table: gam_vecs_Dns_n36_s06}. The code used for these computations is available in the GitHub repository \cite{DHMPR25code}.
\end{ex}

\begin{center}
\begin{table}[h]
    \ra{1.3}
    \begin{tabular}{@{}lllllllll@{}} 
     \hline
     $n$ & &$s=0$ & $s=1$ & $s=2$ & $s=3$ & $s=4$ & $s=5$ & $s=6$ \\
     \hline
     
     $3$& $\begin{matrix}
         \gamma_0\\
         \gamma_1
     \end{matrix}$&
     $\begin{matrix}
             1\\
             8
           \end{matrix}$& $\begin{matrix}
             1\\
             12
           \end{matrix}$& $\begin{matrix}
             1\\
             16
           \end{matrix}$& $\begin{matrix}
             1\\
             20
           \end{matrix}$ & & &\\[0,2cm]
           
    \hline 
     
     $4$ & $\begin{matrix}
         \gamma_0\\
         \gamma_1\\
         \gamma_2
     \end{matrix}$&$\begin{matrix}
             1\\
             40\\
             16
           \end{matrix}$ & $\begin{matrix}
             1\\
             48\\
             32
           \end{matrix}$ & $\begin{matrix}
             1\\
             56\\
             48
           \end{matrix}$ & $\begin{matrix}
             1\\
             64\\
             64
           \end{matrix}$ & $\begin{matrix}
             1\\
             72\\
             80
           \end{matrix}$ & &\\
           
    \hline 
       
     $5$ & $\begin{matrix}
         \gamma_0\\
         \gamma_1\\
         \gamma_2
     \end{matrix}$&$\begin{matrix}
             1\\
             152\\
             366
           \end{matrix}$ & $\begin{matrix}
             1\\
             168\\
             464
           \end{matrix}$ & $\begin{matrix}
             1\\
             184\\
             592
           \end{matrix}$ & $\begin{matrix}
             1\\
             200\\
             720
           \end{matrix}$ & $\begin{matrix}
             1\\
             216\\
             848
           \end{matrix}$ & $\begin{matrix}
             1\\
             232\\
             976
           \end{matrix}$ &\\
        
      \hline
           
     $6$ & $\begin{matrix}
         \gamma_0\\
         \gamma_1\\
         \gamma_2\\
         \gamma_3
     \end{matrix}$&$\begin{matrix}
             1\\
             524\\
             3440\\
             832
           \end{matrix}$ & $\begin{matrix}
             1\\
             556\\
             4144\\
             1344
           \end{matrix}$ & $\begin{matrix}
             1\\
             588\\
             4848\\
             1856
           \end{matrix}$ & $\begin{matrix}
             1\\
             620\\
             5552\\
             2368
           \end{matrix}$ & $\begin{matrix}
             1\\
             652\\
             6256\\
             2880
           \end{matrix}$ & $\begin{matrix}
             1\\
             684\\
             6960\\
             3392
           \end{matrix}$ & $\begin{matrix}
             1\\
             716\\
             7664\\
             3904
           \end{matrix}$\\ 
     \hline
    \end{tabular}
    \caption{The $\gamma$-vectors of $D_{n,s}$ for $n=3,\dots,6$ and $s=0,\dots,6$.}
    \label{table: gam_vecs_Dns_n36_s06}
\end{table}
\end{center}

\subsection{The exceptional cases}\label{subsec:exceptional}

In this subsection we discuss the restrictions of rank $\geq 6$ of the exceptional reflections types $E_7$ and $E_8$. In particular, we computationally verify the $\gamma$-positivity of those restrictions and then finally prove our first main theorem (\Cref{thm: intro_gam_pos}).

For the exceptional reflection types $E_7$ and $E_8$, there exist four restrictions of rank $\geq 6$. More precisely, $E_7$ has one restriction of rank $6$, and $E_8$ has two restrictions of rank $6$ and one of rank $7$. Following \cite[Figure 9]{MS22} we denote these four arrangements by $(46,1)$, $(63,1)$, $(68,1)$ and $(91,1)$, respectively. Here the first entry in each pair encodes the number of hyperplanes of the corresponding restriction.

\begin{proposition}
\label{prop: gam_pos_exceptional}
    The $\gamma$-vectors of the restrictions $(46,1)$, $(63,1)$, $(68,1)$ and $(91,1)$ are given by the vectors in \Cref{table: gam_vecs}. In particular, these arrangements are $\gamma$-positive.  
\end{proposition}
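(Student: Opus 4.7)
The proof of this proposition is necessarily computational: the arrangements in question have rank $6$ or $7$, placing them strictly outside the range covered by \Cref{thm: all_simpl_rank5_arr_gam_pos}, and they are not reflection arrangements so \Cref{thm: all_ref_arr_gam_pos} does not apply either. Moreover they are genuinely sporadic, so one cannot hope for a uniform structural argument as in \Cref{gammaArithemtic}. The plan is therefore to reduce each case to a finite verification using one of the $h$-polynomial formulas established earlier in \Cref{subsec: prelims_gamma_pos_part}, and to run this in \verb|SageMath|.

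First, I would produce explicit normal-vector models of the four restrictions. The standard root systems of $E_7$ and $E_8$ provide the reflection arrangements $\A(E_7)$ and $\A(E_8)$, and the restriction table in \cite[Figure 9]{MS22} identifies, for each of the four labels $(46,1)$, $(63,1)$, $(68,1)$, $(91,1)$, a distinguished flat $X$ of the corresponding intersection lattice such that the restriction $\A^X = \{X\cap H : H \in \A, H\not\supseteq X\}$ has the prescribed number of hyperplanes. Computing this restriction inside the ambient subspace $X$ is a straightforward linear-algebra task and yields a concrete list of normals for each arrangement.

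Second, with these models in hand, I would compute $h(t)$ via \Cref{lem: h_poly_from_tope_graph}: enumerate the chambers by walking the tope graph from a fixed fundamental chamber $C_0$, record each chamber by its sign vector relative to $C_0$, direct the edges as in the definition of the directed tope graph, and sum $t^{\indeg(v)}$. Converting to the $\gamma$-basis via the invertible linear change of basis $h(t)=\sum_i \gamma_i\, t^i(1+t)^{d-2i}$ then yields $\gamma(\A)$, which can be compared entrywise with the vectors in \Cref{table: gam_vecs} and checked for non-negativity. Comparison with the alternative formula from \Cref{lem: h_poly_from_sep_chambers} provides a cross-check.

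The main obstacle is scale rather than structure. The rank-$7$ restriction $(91,1)$ in particular has enough hyperplanes that a naive chamber enumeration by iterating over all sign vectors is infeasible, and even incidence-based approaches are expensive. The key tactical choice is to build the tope graph incrementally via single sign-flips across walls, which keeps the computation within reach; storing chambers by sign vector also permits efficient equality checks. Once these engineering issues are dealt with, the verification is immediate, and the supporting script can be distributed for reproducibility, as in the companion repository \cite{DHMPR25code}.
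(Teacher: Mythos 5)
Your plan coincides with the paper's proof for the first three restrictions $(46,1)$, $(63,1)$ and $(68,1)$: there the authors compute the (undirected) tope graphs with the algorithm of \cite[Appendix A]{KSSW25}, direct them as in \Cref{subsec: prelims_gamma_pos_part}, apply \Cref{lem: h_poly_from_tope_graph}, and expand the resulting $h$-polynomials in the $\gamma$-basis, exactly as you describe. Setting up explicit normal-vector models from \cite[Figure 9]{MS22} and cross-checking with \Cref{lem: h_poly_from_sep_chambers} is fine and unproblematic.

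The gap is in your treatment of $(91,1)$. You assert that building the tope graph incrementally via single sign-flips ``keeps the computation within reach,'' but this is precisely the step the paper reports as infeasible: the tope-graph procedure fails for $(91,1)$ because of the computational complexity of its tope graph. Note the scale: from the $\gamma$-vector in \Cref{table: gam_vecs} one gets $h(1)\approx 2.3\times 10^{7}$ chambers in a rank-$7$ arrangement with $91$ hyperplanes, and beyond storing the sign vectors one must also identify, for each chamber, its walls and their supporting hyperplanes in order to direct the edges and read off in-degrees. The paper resolves this case by an entirely different route: Michael Cuntz computed the $f$-vector of $(91,1)$ using the structure of the associated Weyl groupoid and its symmetries (see \cite{weylgroupoids}), and the $\gamma$-vector is then obtained by the purely linear conversions $f\mapsto h\mapsto\gamma$. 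So your proposal either needs a substantiated argument (with a working implementation) that the sign-flip enumeration really does terminate for $(91,1)$, or it needs to import an additional idea — such as exploiting the symmetry/groupoid structure to compute the $f$-vector — to cover the fourth case; as written, the hardest of the four verifications is claimed rather than carried out.
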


\begin{proof}
    This proof is purely based on computations using the computer algebra system \verb|SageMath|. For the first three simplicial arrangements we start by computing their undirected topegraphs using the algorithm from \cite[Appendix A]{KSSW25} and direct them as described in \Cref{subsec: prelims_gamma_pos_part}. Now we are able to apply \Cref{lem: h_poly_from_tope_graph} to compute the $h$-polynomials associated to the three arrangements. Lastly we expand these $h$-polynomials in the $\gamma$-basis and obtain the first three $\gamma$-coefficient vectors displayed in \Cref{table: gam_vecs}. For the simplicial arrangement $(91,1)$ the aforementioned procedure fails due to the computational complexity of its topegraph. Michael Cuntz computed the $f$-vector of this restriction by using the structure of the associated Weyl groupoid and exploiting its symmetries, see \cite{weylgroupoids} for more details. Thus we obtain the $\gamma$-coefficient vector in \Cref{table: gam_vecs} by transforming the $f$-vector into the $h$-vector and again using the $\gamma$-expansion.     
\end{proof}

\begin{center}
  \begin{table}[h]
  \centering
  \ra{1.3}
  \begin{tabular}{@{}lllll@{}}
   \hline
    $\A$ & $(46,1)$ & $(63,1)$&$(68,1)$&$(91,1)$\\
    \hline
    $\gamma_0$ & $1$ & $1$ & $1$ & $1$ \\
    $\gamma_1$ & $3468$ & $14064$ & $8604$ & $119940$\\
    $\gamma_2$ & $24048$ & $112080$ & $152560$ & $1822512$\\
    $\gamma_3$ & $9536$ & $52352$ & $59712$ & $2403008$\\
       \hline
    \end{tabular}
    \caption{$\gamma$-vectors of the restrictions $(46,1)$, $(63,1)$, $(68,1)$ and $(91,1)$.}
    \label{table: gam_vecs}
    \end{table}
  \end{center} 
\vspace{-20pt}

\begin{remark}
    The \verb|SageMath|-implementations of directing the tope graphs, computing their $h$-polynomials via \Cref{lem: h_poly_from_tope_graph} and expanding these in the $\gamma$-basis are available in the GitHub repository \cite{DHMPR25code}. Furthermore the tope graphs of the first three restrictions in \Cref{prop: gam_pos_exceptional} can be found as database files in the GitHub repository \cite{KSSW25code}.
\end{remark}

Now with all preparations done, we can finally prove the first main result of the article.

\begin{theorem}[\Cref{thm: intro_gam_pos}]
    All restrictions of reflection arrangements are $\gamma$-positive.
\end{theorem}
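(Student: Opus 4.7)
The plan is to combine the classification of finite irreducible reflection groups with the catalogue of restrictions enumerated in \cite[Figure 9]{MS22}, matching each family against a tool already assembled earlier in this section. Concretely, every restriction $\A$ of a reflection arrangement must fall into one of three buckets: (i) $\text{rank}(\A)\leq 5$; (ii) $\A$ is a direct product of reflection arrangements and/or intermediate arrangements $\D_{m,s}$; or (iii) $\A$ is one of the few exceptional restrictions of $E_7$ or $E_8$ of rank at least $6$ which are neither of the previous types.

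First I would clear bucket (i) in one stroke by invoking \Cref{thm: all_simpl_rank5_arr_gam_pos}: since restrictions of simplicial arrangements remain simplicial, and Gal's theorem gives $\gamma$-positivity for all simplicial arrangements of rank $\leq 5$, this covers every restriction of $I_2(m)$, $H_3$, $H_4$, $F_4$, $E_6$, as well as all low-rank restrictions of $A_n$, $\B_n$, $\D_n$, $E_7$, $E_8$. For bucket (ii), any restriction of the braid arrangement $A_{n-1}$ decomposes, relative to the set partition determined by the restricting flat, as a direct product of smaller braid arrangements, each of which is $\gamma$-positive by \Cref{thm: all_ref_arr_gam_pos}. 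Analogously, the classification in \cite[Figure 9]{MS22} exhibits every restriction of $\B_n$ or $\D_n$ as a product of some intermediate arrangement $\D_{m,s}$ with type-$A$ factors, and \Cref{coro: Dns_gam_pos} provides $\gamma$-positivity of the intermediate piece. To finish bucket (ii), one needs the elementary fact that $\gamma$-positivity is preserved under direct products. This follows because the simplicial complex of a product arrangement is the join of the factors, so the $h$-polynomials multiply, together with the identity
\[
\left(\sum_i \gamma_i\, t^i(1+t)^{d_1-2i}\right)\left(\sum_j \gamma'_j\, t^j(1+t)^{d_2-2j}\right)=\sum_{i,j}\gamma_i\gamma'_j\, t^{i+j}(1+t)^{d_1+d_2-2(i+j)},
\]
so positive $\gamma$-coefficients in each factor yield positive $\gamma$-coefficients in the product.

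Finally, bucket (iii) consists, by \cite[Figure 9]{MS22}, of exactly the four sporadic arrangements $(46,1)$, $(63,1)$, $(68,1)$, $(91,1)$, and their $\gamma$-positivity is handled by the \verb|SageMath| computation recorded in \Cref{prop: gam_pos_exceptional}. Assembling the three buckets exhausts the list in \cite[Figure 9]{MS22} and proves the theorem. The only conceptually non-trivial ingredient is the arithmeticity argument behind \Cref{coro: Dns_gam_pos}, which has already been carried out in \Cref{gammaArithemtic}; the main hazard in the plan itself is a bookkeeping one, namely to verify against \cite[Figure 9]{MS22} that every restriction actually does lie in one of the three buckets, so that no sporadic case is overlooked.
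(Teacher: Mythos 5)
Your proposal is correct and follows essentially the same route as the paper: reduce to rank $\geq 6$ via Gal's theorem (\Cref{thm: all_simpl_rank5_arr_gam_pos}), then handle $\D_{n,s}$ via the arithmeticity corollary (\Cref{coro: Dns_gam_pos}) and the four $E_7/E_8$ sporadics via the computation in \Cref{prop: gam_pos_exceptional}. Your bucket~(ii) concerning product decompositions is harmless but superfluous --- according to \cite[Figure~9]{MS22} the rank~$\geq 6$ restrictions of classical types are exactly the irreducible $\D_{n,s}$, and in fact restricting a braid arrangement to a flat yields a \emph{single} smaller braid arrangement rather than a proper direct product --- so this case never arises, though the product lemma you state is true and would cover it if it did.
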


\begin{proof}
    By \Cref{thm: all_simpl_rank5_arr_gam_pos} all simplicial arrangements of rank $\leq 5$ are $\gamma$-positive, so we only have to consider those restrictions of reflection arrangements that are of rank $\geq 6$. According to \cite[Figure 9]{MS22} the only restrictions of reflection arrangements of rank $\geq 6$ are the $\D_{n,s}$ for $n\geq 6$ and the four exceptional restrictions of $E_7$ and $E_8$, namely $(46,1)$, $(63,1)$, $(68,1)$ and $(91,1)$. Now the $\gamma$-positivity for the $\D_{n,s}$ cases is covered by \Cref{coro: Dns_gam_pos} and for the restrictions $(46,1)$, $(63,1)$, $(68,1)$ and $(91,1)$ by \Cref{prop: gam_pos_exceptional}, which concludes the proof.
\end{proof}

\section{Chow polynomials of reflections arrangements}\label{sec:chow_polyn_part}

\subsection{Chow polynomials of matroids}\label{subsec:prelims_chow_polyn}
In this subsection we briefly review a general combinatorial description of the Chow polynomial associated to a simple matroid, introduced by Stump in \cite{S24}, which was already mentioned in the introduction.

In order to state this combinatorial description we first define a special labeling of a finite graded poset with a unique top and bottom element.

Let $(P,\preceq)$ be a bounded poset with the top element $\hat{1}$ and the bottom element $\hat{0}$. For $x,y\in P$ we denote by $x\prec y$ the strict relation and by $x \precdot y$ the cover relation. Furthermore we set $\text{Cov}(P) \subset P \times P$ to be the set of cover relations in $P$.

\begin{df}
    \label{def: edge_labeling}
    An \textbf{edge-labeling} of $P$ is a map $\lambda \colon \text{Cov}(P) \rightarrow \Lambda$ where $\Lambda$ is another poset. Given such a labeling, each saturated chain $c=(x \precdot z_1 \precdot \dots \precdot z_r \precdot y)$ between two elements $x \preceq y $ is called \textbf{increasing} if $\lambda(c) = (\lambda(x,z_1),\dots,\lambda(z_r,y))$ is increasing, and \textbf{decreasing} if $\lambda(c)$ is strictly decreasing. Finally, we call $\lambda$ an \textbf{$R$-labeling} if for all $x\prec y$ in $P$ there exist a unique increasing saturated chain between them. 
\end{df}

Now let $\mathcal{L}(M)$ be the lattice of flats of a matroid $M$ with integer-valued $R$-labeling $\lambda$. To a maximal chain $\mathcal{F} = \{F_0 \prec \dots \prec F_n\}$ we associate the sequence $\lambda(\mathcal{F}) = (\lambda_1,\dots,\lambda_n)$ of edge labels $\lambda_i = \lambda(F_{i-1} \prec F_i)$, where $n$ denotes the rank of $M$.

The following theorem is an important tool for the subsequent discussion. 

\begin{thm}\cite[Theorem 1.1.]{S24}
\label{thm:chow_formula}
The Chow polynomial $H(M)(t)$ can be computed as 
\[ H(M)(t) = \sum_\mathcal{F} t^{\text{des}(\lambda_\mathcal{F})} (t+1)^{n-1-2\text{des}(\lambda_\mathcal{F})}, \]
where the sum ranges over all maximal saturated chains $\mathcal{F}$ in $\mathcal{L}(M)$ such that for $\lambda_{\mathcal{F}} = (\lambda_1,\dots,\lambda_n)$ we have that $\lambda_1 < \lambda_2$ and that $\lambda_i > \lambda_{i+1}$ implies $\lambda_{i-1} < \lambda_i$ for $i \in \{2,\dots n-1\}$.
\end{thm}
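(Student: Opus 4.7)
The plan is to proceed by induction on the rank $n$ of $M$, matching a known recursion for the Chow polynomial against a combinatorial decomposition of admissible maximal chains induced by the $R$-labeling.

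First, I would invoke a recursion expressing $H_M(t)$ as a sum over proper flats of $M$ involving Chow polynomials and reduced characteristic polynomials of minors. Such a recursion is available from the semi-small decomposition of the Chow ring due to Braden--Huh--Matherne--Proudfoot--Wang (and appears in work of Ferroni--Matherne--Stevens--Vecchi) and takes roughly the shape
\[
H_M(t) \;=\; \sum_{\emptyset \neq F \subsetneq E} \overline{\chi}_{M^F}(t) \cdot H_{M/F}(t),
\]
where $M^F$ denotes the restriction to $F$ and $M/F$ the contraction by $F$. This reduces the Chow polynomial of $M$ to invariants of strictly smaller matroids and serves as the engine for the induction on rank.

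On the combinatorial side, I would decompose each admissible maximal chain $\mathcal{F} = \{F_0 \prec \cdots \prec F_n\}$ at the first descent position, i.e., the smallest index $k$ with $\lambda_k > \lambda_{k+1}$ (with the convention $k = n$ when the chain is strictly increasing). The $R$-labeling property guarantees that the initial segment $F_0 \prec \cdots \prec F_k$ is the unique strictly increasing saturated chain in the interval $[\hat{0}, F_k]$, and the terminal segment $F_k \prec \cdots \prec F_n$ is itself an admissible chain in the quotient lattice $\mathcal{L}(M/F_k) \cong [F_k, \hat{1}]$ with the induced $R$-labeling. By the inductive hypothesis, the terminal segment contributes the factor $H_{M/F_k}(t)$; collecting the polynomial weights across all initial segments ending at a given flat $F_k$ should then collapse to $\overline{\chi}_{M^{F_k}}(t)$, matching the recursion above flat-by-flat.

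The main obstacle I expect is verifying this last collapse: one must show that summing $t^{\text{des}} (t+1)^{k-1-2\text{des}}$ over the admissible initial segments of length $k$, together with the descent factor contributed at the split at $F_k$, yields precisely $\overline{\chi}_{M^{F_k}}(t)$. This is closely analogous to the \emph{valley-hopping} bijection of Postnikov--Reiner--Williams that produces the $\gamma$-expansion of the Eulerian polynomial, adapted here to the lattice-of-flats setting. I expect to require an auxiliary sign-reversing involution on ``forbidden'' initial chain patterns, matching increasing sequences in $[\hat{0}, F_k]$ with the combinatorics of the reduced characteristic polynomial, and this matching will be the technical heart of the argument. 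Together with the trivial base case $n = 0$, where the only chain is empty and contributes $1 = H_M(t)$, this yields the claim.
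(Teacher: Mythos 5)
First, a point of comparison: the paper does not prove this statement at all — it is quoted directly from Stump \cite[Theorem 1.1]{S24} — so there is no in-paper argument to measure your proposal against; what follows assesses the proposal on its own terms. (For the record, Stump's proof proceeds via the Poincaré-extended ab-index of the graded lattice, not via the semi-small/FMSV recursion you invoke.)

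Your inductive skeleton has a genuine gap exactly at the step you flag, and it is not a routine valley-hopping adaptation. If you cut an admissible chain at its first descent, located at a rank-$k$ flat $F$, then: (i) the initial segment is the \emph{unique} increasing chain of $[\hat{0},F]$, so there are no "initial segments to collect over"; after reallocating the $(t+1)$-powers its weight is $t(t+1)^{k-2}$, which is not $\overline{\chi}_{M|_F}(t)$ — that polynomial has degree $k-1$ and, by Björner's theorem for $R$-labelings, is governed (with signs) by the \emph{decreasing} chains of $[\hat{0},F]$, not by the increasing one; (ii) the terminal segment is not a free admissible chain of $M/F$: admissibility of the full chain forces the coupling condition $\lambda_{k+1}<\lambda_k$ with the largest label of the initial segment, so these segments do not sum to $H_{M/F}(t)$; (iii) rank-one flats appear in the recursion with coefficient $\overline{\chi}_{M|_F}=1$ but can never host a first descent, since descents in position $1$ are excluded, so a flat-by-flat matching is impossible as stated (also, the recursion should run over all $F\neq\emptyset$ including $F=E$, with $H_{M/E}=1$). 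A concrete obstruction: for the braid matroid $\mathcal{A}_3$ the rank-$2$ flats contribute $4(t-2)+3(t-1)=7t-11$ to the recursion, a polynomial with negative coefficients, whereas every term produced by your chain-side decomposition is manifestly nonnegative; hence any repair must introduce a signed cancellation scheme, and specifying that sign-reversing involution — which is the actual content of the theorem in this approach — is precisely what the proposal leaves open.
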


For matroids arising from arrangements of type $\A_{n-1}$, i.e., \textit{braid matroids}, there exists an explicit formula using permutation statistics.

\begin{theorem}\cite[Theorem 3.1.]{S24}
\label{thm:chow_formula_A_type}
    The Chow polynomial of the braid matroid is
    \[
        H(\A_{n-1})(t)=\sum_{(a_1,\ldots,a_n)} a_1\cdots a_nt^{\text{des}(a_1,\ldots,a_n)}(t+1)^{n-1-2\text{des}(a_1,\ldots,a_n)},
    \]
    where the sum ranges over all tuples $(a_1,\ldots,a_n)$ with $a_i\in\{1,\ldots,n+1-i\}$ such that $a_1 \leq a_2$ and $a_i > a_{i+1}$ implies $a_{i-1}\leq a_i$ for $i\in\{2,\ldots,n-1\}$.
\end{theorem}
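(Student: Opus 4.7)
The plan is to specialize \Cref{thm:chow_formula} to the braid matroid $M(\A_{n-1})$, whose lattice of flats is the classical partition lattice $\Pi_n$ of $[n]$ under refinement.

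First, I would identify $\Lat(M(\A_{n-1}))$ with $\Pi_n$ via the correspondence sending a set partition $\pi$ to the flat $\bigcap H_{ij}$, where the intersection ranges over pairs $\{i,j\}$ lying in the same block of $\pi$. A cover relation $\pi \precdot \pi'$ then corresponds to merging two blocks of $\pi$, and every maximal chain is determined by a sequence of $n-1$ such mergers starting from the all-singletons partition.

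Second, I would introduce an explicit $R$-labeling of $\Pi_n$ whose label sequences on maximal chains can be translated into the tuples $(a_1,\ldots,a_n)$. A suitable candidate is a variant of the Björner-Wachs EL-labeling: for a cover $\pi \precdot \pi'$ merging blocks $B,B'$ with $\min(B) < \min(B')$, label the cover by a positive integer recording the relative position of $\min(B')$ (say, the rank among the current block-minima, or among some ordering of the $n+1-i$ blocks present at step $i$). This should yield labels lying in $\{1,\ldots,n+1-i\}$ at step $i$, matching the prescribed ranges of the $a_i$.

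Third, I would set up the weight-preserving correspondence. The tuples $(a_1,\ldots,a_n)$ with the stated restrictions should parameterize label sequences of maximal chains $\mathcal{F}$ in $\Pi_n$ satisfying the hypotheses of \Cref{thm:chow_formula}, with the factor $a_1\cdots a_n$ accounting for the multiplicity of chains realizing the same label pattern. Concretely, at step $i$ of a chain, a given label value $a_i$ is produced by $a_i$ distinct mergers among the $\binom{n+1-i}{2}$ possible choices. The product $a_1\cdots a_n$ is then exactly the number of maximal chains producing a fixed tuple as label sequence.

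Finally, I would verify that the restrictions $a_1\leq a_2$ and ``$a_i > a_{i+1}$ implies $a_{i-1}\leq a_i$'' coincide with the conditions in \Cref{thm:chow_formula}, namely $\lambda_1 < \lambda_2$ and the absence of double descents, and that $\text{des}(a_1,\ldots,a_n) = \text{des}(\lambda_\mathcal{F})$. Substituting the count $a_1\cdots a_n$ into \Cref{thm:chow_formula} and summing over valid tuples then yields the claimed expression for $H_{\A_{n-1}}(t)$.

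The main obstacle will be constructing the $R$-labeling so that its label sequences on maximal chains are precisely indexed by such tuples, with the multiplicity given by $a_1\cdots a_n$ and the descent statistic preserved. The labeling must be fine enough to encode the combinatorial multiplicities yet coarse enough to retain the unique-increasing-chain property required of an $R$-labeling; matching the chain-restriction conditions to the tuple-restriction conditions is the delicate combinatorial verification at the core of the argument.
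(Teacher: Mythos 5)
This statement is quoted from \cite[Theorem 3.1]{S24} and is not proved in the paper; the closest in-house benchmark is the type-$B$ analogue (\Cref{thm: Chow_poly_Bn}) proved in Section 4.2, and your overall architecture (specialize \Cref{thm:chow_formula} to the partition lattice, count chains with a fixed label pattern, translate the no-first-descent/no-double-descent conditions into conditions on tuples) is exactly that of the paper's type-$B$ argument. The genuine gap is at the step you yourself flag as delicate: the labeling. \Cref{thm:chow_formula} is only applicable to an $R$-labeling, and your candidate label (``the rank of $\min(B')$ among the current block minima, or among some ordering of the blocks present at step $i$'') is neither pinned down nor shown to be an $R$-labeling; it is not the standard max-of-min labeling for which the $R$/EL property is known. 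The route the paper takes in type $B$ (and which works verbatim in type $A$, with $\prod a_i$ in place of $\prod(2a_i-1)$) keeps the \emph{absolute} labels $\lambda(\pi,\pi')=\max\{\min B,\min B'\}$, shows the label set of an interval is chain-independent (\Cref{setOfLabels}), proves by induction that the number of chains realizing a prescribed label word equals the product of the inversion-sequence entries (\Cref{numberOfChains}) --- note that the per-step count must be shown to be independent of which partition the chain has reached, which is the actual content of that induction --- and only then converts descents of the label word into descents of the inversion sequence (\Cref{inversionDescents}) and re-indexes the sum via \Cref{inversionBijection}. In your sketch, the multiplicity claim ``label value $a_i$ is produced by $a_i$ distinct mergers'' and the identity $\mathrm{des}(a_1,\dots,a_n)=\mathrm{des}(\lambda_{\mathcal F})$ are precisely these two lemmas, and they are asserted rather than proved; if instead you really use a relative-rank labeling so that the $a_i$ \emph{are} the labels, then the descent bookkeeping becomes trivial but the unproved $R$-labeling property becomes the whole theorem's point of entry.

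There is also a rank-bookkeeping mismatch you need to resolve before any chain--tuple dictionary can be set up. Maximal chains in $\Lat(M(\A_{n-1}))\cong\Pi_n$ consist of $n-1$ covers, and with $n+1-i$ blocks present before step $i$ (your own count) the possible label values at step $i$ are at most $n-i$; such chains can never carry tuples $(a_1,\dots,a_n)$ with $a_i\in\{1,\dots,n+1-i\}$. The tuple ranges, the exponent $n-1-2\,\mathrm{des}$ in \Cref{thm:chow_formula} (which involves the rank of the matroid), and the paper's example values (length-$4$ tuples yield $t^3+41t^2+41t+1=H_{\A_4}$) all point to the rank-$n$ braid lattice, i.e.\ partitions of an $(n+1)$-element set, whose maximal chains have exactly $n$ covers with label set $\{2,\dots,n+1\}$. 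Fixing this normalization is not cosmetic: without it the correspondence between valid tuples and admissible maximal chains, on which the entire proof rests, does not type-check.
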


We end with some examples of Chow polynomials corresponding to reflection arrangements of type $A$ and $B$. They are computed using \Cref{thm:chow_formula_A_type} and \Cref{thm:chow_formula} for type $A$ and $B$ respectively.

\begin{ex}
   For small $n$, we get the following polynomials:
   \begin{center}
       \begin{table}[h]
           \centering
           \ra{1.3}
           \begin{tabular}{@{}ll@{}}
              \hline
              \textbf{Type $A$}  & \textbf{Type $B$}  \\
              \hline
              $H(\A_2)(t)=t+1$  & $H(\B_2)(t)=t+1$  \\
              $H(\A_3)(t)=t^2+8t+1$  & $H(\B_3)(t)=t^2+14t+1$\\
            $H(\A_4)(t)=t^3 + 41t^2 + 41t + 1$ & $H(\B_4)(t)=t^3 + 99t^2 + 99t + 1$  
           \end{tabular}
       \end{table}
   \end{center}
\end{ex}

\subsection{The Chow polynomial in type $B$}\label{subsec:chow_polyn_Bn}

In this subsection we derive an explicit formula of the Chow polynomial for matroids arising from the arrangements of type $B$. Our discussion uses type $B$ analogues of the approach in \Cref{thm:chow_formula_A_type}, by defining an $R$-labeling for the intersection lattices of $\B_n$-arrangements and applying \Cref{thm:chow_formula}.

\begin{definition}
\label{def: inters_lat_B_n}
Define $\Pi_n^B$ as the set of all partitions $\pi$ of $\langle n\rangle :=\{-n,\dots,n\}$ with the properties
\begin{enumerate}
\item If $B\in \pi$ then $-B\in \pi$
\item If $i\in B$ and $-i\in B$ for some $i$ and some $B\in \pi$, then $0\in B$.
\end{enumerate}
This defines a poset by setting $x\prec y$ if and only if for each $B\in x$ there is $B'\in y$ with $B\subseteq B'$.
\end{definition}

For any $\pi\in\Pi_n^B$, the block containing $0$ is called the \textbf{zero block}.
We define the rank function $r:\Pi_n^B\to \mathbb{N}$ by
\[r(\pi)=n-\frac{|\pi|-1}{2}\]
which turns $\Pi_n^B$ into a graded poset.
Furthermore, we get the following identification, which is a reformulation of \cite[Theorem 7]{DNG19}:

\begin{lemma}
The intersection lattice for the hyperplane arrangement of type $\B_n$ is isomorphic to $\Pi_n^B$.
\end{lemma}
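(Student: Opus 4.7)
The plan is to build an explicit, order-preserving bijection $\phi\colon \mathcal{L}(\B_n) \to \Pi_n^B$. The guiding observation is that every subspace $V$ in the intersection lattice is cut out by equations of the three types $x_i - x_j = 0$, $x_i + x_j = 0$, and $x_i = 0$, so $V$ is completely encoded by which coordinate functions it forces to coincide (up to sign) and which it forces to vanish. Adopting the conventions $x_0 := 0$ and $x_{-i} := -x_i$, I would set $i \sim_V j$ iff the linear form $x_i - x_j$ vanishes identically on $V$, and take $\phi(V)$ to be the resulting partition of $\langle n\rangle$.

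First I would verify $\phi(V) \in \Pi_n^B$. Property~(1) follows from $x_{-i} - x_{-j} = -(x_i - x_j)$, so $-B$ is a block whenever $B$ is. For property~(2), if $i \sim_V -i$ with $i \neq 0$, then $x_i - x_{-i} = 2x_i$ vanishes on $V$, hence $x_i \equiv 0$ on $V$, forcing $0 \sim_V i$ and putting $i$ in the zero block.

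Next I would define the candidate inverse $\psi\colon \Pi_n^B \to \mathcal{L}(\B_n)$ by sending $\pi$ to the subspace $V_\pi \subseteq \R^n$ cut out by the relations $x_i = x_j$ (read with the sign conventions) for all $i,j$ in a common block of $\pi$. Each such relation translates to one of $x_i - x_j = 0$, $x_i + x_j = 0$, or $x_k = 0$, each defining a hyperplane of $\B_n$; thus $V_\pi$ is an intersection of hyperplanes of $\B_n$ and so lies in $\mathcal{L}(\B_n)$.

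The mutual-inverse check reduces to two observations. First, $\phi(\psi(\pi)) = \pi$: the linear forms $x_i - x_j$ vanishing on $V_\pi$ are exactly those with $i \sim j$ in $\pi$, since the free parameters associated to distinct pairs $(B,-B)$ of non-zero blocks can be chosen independently. Second, $\psi(\phi(V)) = V$: any hyperplane of $\B_n$ containing $V$ is of the form $\{x_i = \pm x_j\}$ or $\{x_i = 0\}$, each corresponding to an equivalence in $\phi(V)$, so $V$ is already the intersection of precisely the hyperplanes arising from $\phi(V)$. Order-preservation is then transparent: $V \preceq V'$ in $\mathcal{L}(\B_n)$ means $V' \subseteq V$, so every form vanishing on $V$ also vanishes on $V'$, giving $\sim_V\,\subseteq\,\sim_{V'}$, i.e. $\phi(V)$ refines $\phi(V')$, which is exactly $\phi(V) \preceq \phi(V')$ in $\Pi_n^B$. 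The step that deserves the most care, and is the main (though routine) obstacle, is verifying that $V_\pi$ satisfies no additional linear relations beyond those prescribed by $\pi$; once that freeness of parameters is established, the rank formula $r(\pi) = n - (|\pi|-1)/2$ automatically matches the codimension of $V_\pi$ and the poset isomorphism follows.
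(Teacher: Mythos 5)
Your proof is correct, and it supplies something the paper itself does not: the paper gives no argument for this lemma, but simply labels it ``a reformulation of \cite[Theorem 7]{DNG19}'' and delegates the verification to that reference, where intersection lattices of Dowling-type arrangements are identified with lattices of signed set partitions (in the notation $\Pi_{n,\Sigma}$, whose translation to $\Pi_n^B$ the paper spells out only in a later remark). Your construction is the classical, elementary one: declaring $i \sim_V j$ exactly when $x_i - x_j$ vanishes on $V$, with the conventions $x_0 := 0$ and $x_{-i} := -x_i$, packages the three hyperplane types $\{x_i = x_j\}$, $\{x_i = -x_j\}$, $\{x_i = 0\}$ into a single relation, and the two defining axioms of $\Pi_n^B$ fall out of $x_{-i} - x_{-j} = -(x_i - x_j)$ and $x_i - x_{-i} = 2x_i$. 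Your route is self-contained and avoids the translation dictionary, at the cost of not directly tying into the EL-labeling machinery of \cite{DNG19} that the rest of the section depends on. The only place you deliberately leave implicit---that $V_\pi$ carries no linear relations beyond those forced by $\pi$---is indeed short, but since it is the load-bearing step it is worth writing down the dimension count $\dim V_\pi = (|\pi|-1)/2$, obtained by choosing one representative coordinate per non-zero block pair $\{B,-B\}$ and observing that these values are unconstrained; this simultaneously verifies the injectivity of $\psi$ and confirms that the rank function on $\Pi_n^B$ matches codimension in $\R^n$.
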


We can explicitly describe the cover relations in $\Pi_n^B$. To this end, let $\pi,\pi'\in\Pi_n^B$ such that $\pi\precdot \pi'$. There are two possible cases:
\begin{itemize}
\item There is a block $B\in\pi$ such that $B\cup B_0\cup (-B)\in \pi'$, where $B_0$ is the zero block in $\pi$, and all other blocks in $\pi'$ are exactly the blocks of $\pi$ which are not equal to $B,B_0,(-B)$.
\item There are two blocks $B, B'\in\pi$, neither of which is the zero block, such that $B\cup B'\in \pi'$ and $(-B)\cup (-B')\in \pi'$ and all other blocks in $\pi'$ are exactly the blocks of $\pi$ which are not equal to $B,B',(-B),(-B')$.
\end{itemize}

Next, we define a labeling for all cover relations in $\Pi_n^B$, motivated by the usual max-of-min labeling on the (unsigned) partition lattice. Let again $\pi,\pi'\in\Pi_n^B$ be such that $\pi\precdot\pi'$.
Then there exist two distinct blocks $B_1,B_2\in \pi$ which are not the zero block and which are both contained in the same block $B'$ in $\pi'$. We define the label of the cover relation $\pi\precdot\pi'$ as
\[\lambda(\pi,\pi')=\max\{\min |B_1|,\min |B_2|\}\]
where $|B|=\{|b|:b\in B\}$ is the set of absolute values of elements in the block $B$.
Note that this is well defined in both cases of cover relations described above, since it does not matter if we choose the pair $B_1,B_2$ or $-B_1,-B_2$.
\begin{lemma}
\label{setOfLabels}
Let $x,y\in \Pi_n^B$ with $x\prec y$. For any saturated chain $x=x_0\precdot\dots\precdot x_k=y$
the set of labels
\[\{\lambda(x_i,x_{i+1}): i=0,\dots,k-1\}\] is the same. We denote this set by $\lambda(x,y)$.
\end{lemma}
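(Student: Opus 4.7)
The plan is to exhibit an explicit subset $\Lambda(x,y) \subseteq \{1, \dots, n\}$ depending only on the pair $x \prec y$, and prove by induction on $k = r(y) - r(x)$ that every saturated chain from $x$ to $y$ has label set equal to $\Lambda(x,y)$.

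I would begin with the structural observation that, in any $\pi \in \Pi_n^B$, two distinct $\pm$-pairs $\{B, -B\}$ and $\{B', -B'\}$ of non-zero blocks have different values of $\min|\cdot|$: if $\min|B| = j$, then $j$ or $-j$ lies in $B \cup (-B)$, preventing any other non-zero block from containing $\pm j$. As a consequence, the labels in a saturated chain are automatically distinct, and the two types of covers are characterized by their label $a$: a type-1 cover absorbs the unique non-zero $\pm$-pair whose minimum is $a$ into the zero block, while a type-2 cover merges that pair with a non-zero $\pm$-pair of strictly smaller minimum.

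Given this, I would define $\Lambda(x,y)$ in two pieces. For each non-zero $\pm$-pair $\{C, -C\}$ of $y$, let $k_1 < k_2 < \dots < k_m$ be the ordered list of minima of the non-zero $\pm$-pairs of $x$ whose union (together with their negatives) forms $C \cup (-C)$; contribute $\{k_2, \dots, k_m\}$ to $\Lambda(x,y)$. Additionally, for each non-zero $\pm$-pair $\{D, -D\}$ of $x$ absorbed into the zero block of $y$, contribute $\min|D|$. A direct rank count shows $|\Lambda(x,y)| = r(y) - r(x)$, matching the length of any saturated chain.

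The induction step examines the first cover $x \precdot x_1$ with label $a$ and establishes the identity $\Lambda(x_1, y) = \Lambda(x, y) \setminus \{a\}$. In the type-1 case, the absorbed $\pm$-pair $\{B, -B\}$ with $\min|B| = a$ contributes only $a$ to the second piece of the formula, and that contribution disappears in $\Lambda(x_1, y)$. In the type-2 case, the two merged pairs $\{B, -B\}$ and $\{B', -B'\}$ with $\min|B| < \min|B'| = a$ must have $B$ and $B'$ sitting in the same block of $y$ since $x_1 \preceq y$; the new merged pair of $x_1$ has minimum $\min|B| < a$, and whether $B \cup B'$ lies in a non-zero block or the zero block of $y$, comparing the formula for $\Lambda(x_1,y)$ to that for $\Lambda(x,y)$ shows precisely that the value $a$ has been deleted. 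The induction hypothesis applied to $(x_1, y)$ then yields that the chain's label set equals $\{a\} \cup \Lambda(x_1, y) = \Lambda(x, y)$.

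The main obstacle will be the bookkeeping in the non-zero sub-case of type-2: one must check that the new $k$-list for $\{C, -C\}$ in $(x_1, y)$ equals the old $k$-list with $a$ removed (since the merged pair carries its smaller minimum $\min|B|$, which was already present, while the larger minimum $a$ is no longer witnessed by any $\pm$-pair of $x_1$), and hence its ordered tail $\{k_2, \dots, k_m\} \setminus \{a\}$ matches exactly the portion of $\Lambda(x, y) \setminus \{a\}$ attributed to $\{C, -C\}$.
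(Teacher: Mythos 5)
Your proposal is correct and its essential content coincides with the paper's proof: both identify the chain-independent label set explicitly as, for each block of $y$, the non-minimal absolute minima of the blocks of $x$ merged to form it (the zero block of $y$ accounting exactly for your second piece, since its own minimum $0$ is the one discarded). The only difference is organizational — you verify this identification by induction on chain length via $\Lambda(x_1,y)=\Lambda(x,y)\setminus\{a\}$ for the first cover, whereas the paper argues both inclusions directly (every non-minimal minimum must occur as a label, and every label is such a value) — so this is essentially the same approach.
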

\begin{proof}
Let $x=\{B_1,\dots,B_k\}$ and $y=\{B_1',\dots,B_l'\}$ and consider a saturated chain $x=x_0\precdot\dots\precdot x_k=y$. For $j=1,\dots,k$, define $b_j=\min |B_j|$ and for $i=1,\dots,l$ define \[M_i=\{b_j: B_j\subseteq B_i'\}\] which is the set of absolute minima of all blocks in $x$ which are merged together to obtain the block $B_i'$ in $Y$. For each $\mu\in M_i\setminus\{\min M_i\}$, there must be some step in the chain where the block containing $\mu$ is merged with another block whose absolute minimum is smaller than $\mu$. At this step, $\mu$ appears as a label. Conversely, at any merge going from some $x_i$ to $x_{i+1}$, we use the absolute minimum of one of the blocks of $x_i$ as a label. But the absolute minimum of any block of $x_i$ already appears as the absolute minimum of a block of $x$, i.e. as one of the values $b_j$. Furthermore, it cannot be equal to $\min M_j$ for any $j=1,\dots,l$.
Thus, we have shown that the set of labels is \[\bigcup_{i=1}^l M_i\setminus\{\min M_i\}\]
which is independent of the chain we used.
\end{proof}

For any saturated chain $c$ with $x=x_0\precdot\dots\precdot x_k=y$, denote by 
\[\lambda(c)=(\lambda(x_0,x_1),\dots,\lambda(x_{k-1},x_k))\]
the sequence of labels along $c$.

For any permutation $\sigma$ of $[k]$, the inversion sequence $\text{Inv}(\sigma)=(a_1,\dots,a_k)$ is given by
\[a_i=|\{j\geq i:\sigma(j)\leq\sigma(i)\}|.\]

\begin{lemma}
\label{numberOfChains}
Let $x,y\in \Pi_n^B$ with $x\prec y$ and let $\lambda(x,y)=\{b_1,\dots,b_k\}$ with $b_1<\dots<b_k$. For any permutation $\sigma$ of $[k]$ with inversion sequence $\text{Inv}(\sigma)=(a_1,\dots,a_k)$, the number of saturated chains $c$ starting from $x$ and ending in $y$ with $\lambda(c)=(b_{\sigma(1)},\dots,b_{\sigma(k)})$ is equal to $\prod_{i=1}^k(2a_i-1)$.
\end{lemma}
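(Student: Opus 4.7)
I plan to prove this by induction on $k$, the common length of any saturated chain from $x$ to $y$ established in \Cref{setOfLabels}. The base case $k=1$ is immediate: there is a unique cover relation $x\precdot y$ with label $b_1$, the only permutation $\sigma$ is trivial with $a_1=1$, and $\prod(2a_i-1)=1$ matches the single chain.

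For the inductive step, set $t:=\sigma(1)$ and note that $a_1=t$, since $\sigma$ attains every value in $[k]$ and exactly $t$ of them lie in $[t]$. The first cover relation in any counted chain must carry label $b_t$, so I would split the count into the first step and the remaining $k-1$ steps. Concretely, after verifying that the number of cover relations $x\precdot x_1$ with label $b_t$ and $x_1\preceq y$ equals $2t-1=2a_1-1$, I would fix any such $x_1$ and invoke the inductive hypothesis. By \Cref{setOfLabels}, $\lambda(x_1,y)=\lambda(x,y)\setminus\{b_t\}$; re-ranking its $k-1$ elements in increasing order as $b'_1<\cdots<b'_{k-1}$ induces a permutation $\tau\in S_{k-1}$ defined by $b_{\sigma(i+1)}=b'_{\tau(i)}$, and because the inversion statistic depends only on relative order, a direct unpacking gives $\text{Inv}(\tau)=(a_2,\ldots,a_k)$. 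Induction applied to $(x_1,y)$ and $\tau$ yields $\prod_{i=2}^k(2a_i-1)$ chains, independent of which admissible $x_1$ was chosen, producing the full product $\prod_{i=1}^k(2a_i-1)$ after multiplying by $2a_1-1$.

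The substantive part is the first-step count. Let $B^{(t)}$ be the block of $x$ with $\min|B^{(t)}|=b_t$ (unique up to replacing $B^{(t)}$ with $-B^{(t)}$). Using the dichotomy of cover relations described after \Cref{def: inters_lat_B_n}, the cover relations at $x$ with label $b_t$ are of two kinds: (i) the single Type 1 move fusing $B^{(t)}$, $-B^{(t)}$, and the zero block, and (ii) Type 2 moves pairing $B^{(t)}$ with another block $B'$ satisfying $\min|B'|<b_t$, where each unordered pair $\{B',-B'\}$ produces two distinct covers via the "straight" alignment $B^{(t)}\cup B',\ -B^{(t)}\cup -B'$ and the "swapped" alignment $B^{(t)}\cup -B',\ -B^{(t)}\cup B'$. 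Invoking \Cref{setOfLabels}, the unordered pairs $\{B',-B'\}$ compatible with eventually reaching $y$ correspond exactly to the $t-1$ strictly smaller labels $b_1,\ldots,b_{t-1}$, giving $1+2(t-1)=2t-1$ admissible first moves.

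The main obstacle is justifying the factor of $2$ per smaller label in item~(ii), namely that both sign alignments yield a successor with $x_1\preceq y$. This is transparent when the blocks under consideration are ultimately absorbed into the zero block of $y$, which is closed under negation and hence accommodates either alignment. It is more delicate when the merge concerns blocks destined for a non-zero antipodal pair $\{B_j',-B_j'\}$ of $y$, where property~(2) of \Cref{def: inters_lat_B_n} imposes nontrivial sign constraints. I would resolve this by decomposing $y$ into its zero block together with its antipodal pairs of non-zero blocks, carrying out the admissibility analysis componentwise inside each, and using \Cref{setOfLabels} to track which labels remain "live" at each step so that the inductive hypothesis applies uniformly to every admissible $x_1$ and the count in step~(A) simplifies cleanly to $2a_1-1$.
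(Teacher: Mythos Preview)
Your approach—build the chain left to right, count admissible first moves, then recurse—is essentially the paper's proof, which phrases the same idea as a recursion $m(\sigma^{(i)})=m(\sigma^{(i-1)})\cdot(2a_i-1)$ rather than an induction on $k$. The paper, like you, reduces the step count to ``one zero-block merge plus two merges per smaller available label,'' and then simply asserts the factor of~$2$ without addressing the compatibility with $y$ that you worry about.

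Your worry is justified, and in fact the resolution you sketch cannot work because the lemma as stated is \emph{false} for general $x\prec y$. Take $n=3$, $x=\hat 0$, and $y=\{1,2,3\}\mid\{-1,-2,-3\}\mid\{0\}$. Then $\lambda(x,y)=\{2,3\}$, and for $\sigma$ with $\sigma(1)=2,\sigma(2)=1$ the inversion sequence is $(2,1)$, so the formula predicts $(2\cdot2-1)(2\cdot1-1)=3$ chains with label sequence $(3,2)$; but there are only two, obtained by first merging $\{1\}$ with $\{3\}$ or $\{2\}$ with $\{3\}$. The reason is exactly what you flagged: when the block carrying the current label lands in a \emph{non-zero} block of $y$, only one of the two sign alignments stays $\preceq y$, and the zero-block merge is also forbidden, so the count at that step is $a_i-1$ rather than $2a_i-1$. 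The paper's argument tacitly assumes every non-zero block minimum of the intermediate partition lies in $\lambda(x,y)$ and that both alignments are always available—this holds precisely when $y=\hat 1$.

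The correct fix is to restrict the hypothesis rather than patch the proof. Both downstream applications survive: \Cref{thm: Chow_poly_Bn} only needs $x=\hat 0$, $y=\hat 1$, where every block is ultimately absorbed into the zero block of $y$ and the $2a_i-1$ count is valid; the $R$-labeling proof only needs $\sigma=\mathrm{id}$, where each $a_i=1$ and no factor of~$2$ ever enters. Your componentwise decomposition over the blocks of $y$ would give the correct count for general $y$, but it would not produce $\prod_i(2a_i-1)$.
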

\begin{proof}
For any $i\in \{1,\dots,k\}$, let $\sigma^{(i)}=(\sigma(1),\dots,\sigma(i))$ be the partial permutation obtained by taking the first $i$ entries of $\sigma$. 
Let $m(\sigma^{(i)})$ be the number of saturated chains $C$ starting from $X$ with $\lambda(C)=(b_{\sigma(1)},\dots,b_{\sigma(i)})$. 
We claim that
\begin{equation}
\label{eq}
m(\sigma^{(i)})=m(\sigma^{(i-1)})\cdot (2a_i-1).
\end{equation}
For any saturated chain $c$ with $\lambda(c)=(b_{\sigma(1)},\dots,b_{\sigma(i-1)})$, we want to count the number of ways in which $c$ can be extended so that the next label is $b_{\sigma(i)}$.
In other words, if $\pi\in\Pi_n^B$ is the endpoint of $c$, then we want to count the number of ways we can merge blocks of $\pi$ such that $b_{\sigma(i)}$ is the label assigned to the cover relation.
One of the blocks involved in such a merge must contain $b_{\sigma(i)}$, say $B$ (which is not the zero block).
The first case is that we merge $B$ to its negative and the zero block, which gives the label $b_{\sigma(i)}$.
The second case is that we merge $B$ to some block $B'$ which is not $-B$ or the zero block. In this case, let $b_{\sigma(j)}$ be the absolute minimum of $B'$, then we must have $j>i$ and $b_{\sigma(j)}<b_{\sigma(i)}$. The first property comes from the fact that all values $b_{\sigma(j)}$ with $j<i$ have already appeared as a label in $C$, meaning that they are no longer the absolute minima in their respective blocks. The second property comes from the fact that we want \[b_{\sigma(i)}=\max\{b_{\sigma(i)}, b_{\sigma(j)}\}\] and it is equivalent to $\sigma(j)< \sigma(i)$.
Note that every index $j$ with $j>i$ and $\sigma(j)>\sigma(i)$ corresponds to exactly two possible merges (merging $B$ with $B'$ or $B$ with $-B'$).
Thus, the number of ways to merge blocks of $\pi$ such that the label is $b_{\sigma(i)}$ is
\[2\cdot |\{j>i:\sigma(j)<\sigma(i)\}|+1=2a_i-1\]
which implies Equation \eqref{eq}. 
Applying Equation \eqref{eq} recursively finally yields $m(\sigma) = \prod_{i=1}^k(2a_i-1)$. 
\end{proof}

\begin{lemma}
The map $\lambda$ is an $R$-labeling of $\Pi_n^B$.
\end{lemma}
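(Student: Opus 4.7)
The plan is to combine the two preceding lemmas. Fix $x \prec y$ in $\Pi_n^B$. By \Cref{setOfLabels}, every saturated chain from $x$ to $y$ carries the same underlying multiset of labels, which we write as $\{b_1,\dots,b_k\}$ with $b_1 < \cdots < b_k$. A saturated chain from $x$ to $y$ is increasing precisely when its label sequence equals $(b_1,\dots,b_k)$, i.e.\ corresponds to the identity permutation $\sigma = \mathrm{id}$ of $[k]$.

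So I would then compute the number of such chains using \Cref{numberOfChains}. For $\sigma = \mathrm{id}$, the inversion sequence is $(a_1,\dots,a_k) = (1,1,\dots,1)$, since the only index $j \geq i$ with $\sigma(j) \leq \sigma(i)$ is $j = i$ itself. Substituting into the formula gives
\[
\prod_{i=1}^k (2a_i - 1) \;=\; \prod_{i=1}^k 1 \;=\; 1.
\]
Hence there is exactly one saturated chain from $x$ to $y$ whose label sequence is increasing, which is precisely the defining condition of an $R$-labeling in \Cref{def: edge_labeling}.

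The proof is essentially a one-line consequence of the two lemmas, so no serious obstacle is expected; the real work has already been done in proving \Cref{setOfLabels} and \Cref{numberOfChains}. The only thing to double-check is that the convention for the inversion sequence used in \Cref{numberOfChains} indeed assigns $a_i = 1$ to the identity (rather than $a_i = 0$), which it does because the index $j = i$ is counted in $\{j \geq i : \sigma(j) \leq \sigma(i)\}$.
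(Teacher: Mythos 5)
Your proof is correct and follows essentially the same route as the paper: both reduce the $R$-labeling property to \Cref{setOfLabels} (all chains carry the same label set) and \Cref{numberOfChains} with $\sigma=\mathrm{id}$, whose inversion sequence $(1,\dots,1)$ gives exactly one increasing chain. The only difference is that the paper additionally constructs the increasing chain by induction before invoking the count, a step your argument correctly identifies as unnecessary since the count being equal to $1$ already yields existence as well as uniqueness.
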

\begin{proof}
Let $x,y\in \Pi_n^B$ be two partitions with $x\prec y$ and let $\lambda(x,y)=\{b_1,\dots,b_k\}$ with $b_1<\dots<b_k$. First, we show that there exists a saturated increasing chain from $x$ to $y$. Let $B'$ be the block in $Y$ containing $b_1$ and let $b=\min |B'|$. Let $x'$ be the partition obtained from $x$ by merging the block containing $b_1$ with the block containing $b$ (and all other merges to ensure that $x'\in \Pi_n^B$). Then $x'$ covers $x$ and $\lambda(x,x')=b_1$. Furthermore we get $\lambda(x',y)=\{b_2,\dots,b_k\}$, so the existence of the chain follows from induction on $k$. For uniqueness, note that by Lemma \ref{setOfLabels}, any saturated chain from $x$ to $y$ must have exactly the labels $\lambda(x,y)$. By Lemma \ref{numberOfChains}, the number of saturated chains $c$ starting from $x$ and ending in $y$ which are increasing (i.e. with $\lambda(c)=(b_1,\dots,b_k)$) is equal to 1, since the inversion sequence of the identity permutation is $(1,\dots,1)$. This proves existence and uniqueness of $c$.
\end{proof}

\begin{lemma}
\label{inversionDescents}
Let $\sigma$ be a permutation of $[n]$ with inversion sequence $\text{Inv}(\sigma)=(a_1,\dots,a_n)$.
Then for all $i\in \{1,\dots,n-1\}$ it holds
\[a_i > a_{i+1}\iff \sigma(i)>\sigma(i+1)\]
and 
\[a_i \leq a_{i+1}\iff \sigma(i)<\sigma(i+1).\]
\end{lemma}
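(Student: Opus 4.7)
The plan is to unpack the definition $a_i = |\{j \geq i : \sigma(j) \leq \sigma(i)\}|$ and reduce the comparison of $a_i$ with $a_{i+1}$ to a comparison of two sets of indices $\geq i+1$. First I would separate out the trivial contribution from $j=i$ in the definition of $a_i$, writing
\[
a_i = 1 + |A_i|, \quad \text{where } A_i = \{j \geq i+1 : \sigma(j) \leq \sigma(i)\},
\]
and observe that $a_{i+1} = |A_{i+1}|$ where $A_{i+1} = \{j \geq i+1 : \sigma(j) \leq \sigma(i+1)\}$. Because $\sigma$ takes distinct integer values, exactly one of $\sigma(i) > \sigma(i+1)$ or $\sigma(i) < \sigma(i+1)$ holds, so it suffices to prove the forward implications of the two displayed statements; the backward implications follow automatically.

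For the first direction, I would argue that if $\sigma(i) > \sigma(i+1)$ then every $j \geq i+1$ satisfying $\sigma(j) \leq \sigma(i+1)$ also satisfies $\sigma(j) \leq \sigma(i)$, so $A_{i+1} \subseteq A_i$. Moreover $i+1 \in A_i$ because $\sigma(i+1) < \sigma(i)$. Thus $|A_i| \geq |A_{i+1}|$, and so $a_i = 1 + |A_i| > |A_{i+1}| = a_{i+1}$. For the second direction, suppose $\sigma(i) < \sigma(i+1)$. Then $A_i \subseteq A_{i+1}$ by the same containment argument, and now the index $i+1$ lies in $A_{i+1}$ but not in $A_i$, giving $|A_{i+1}| \geq |A_i| + 1$. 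Therefore $a_{i+1} = |A_{i+1}| \geq |A_i| + 1 = a_i$, i.e.\ $a_i \leq a_{i+1}$.

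I do not expect any serious obstacle here: the argument is elementary set comparison, with the only subtle point being the bookkeeping of the $j=i$ term in $a_i$ (which is precisely what creates the strict/weak asymmetry between the two cases). The dichotomy $\sigma(i) > \sigma(i+1)$ vs.\ $\sigma(i) < \sigma(i+1)$ then converts the two forward implications into equivalences automatically.
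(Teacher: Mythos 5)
Your proof is correct and is essentially the paper's argument: both reduce the comparison of $a_i$ and $a_{i+1}$ to an inclusion between the sets of indices $j\geq i+1$ they count, keeping track of the extra term $j=i$, and then use the dichotomy $\sigma(i)>\sigma(i+1)$ versus $\sigma(i)<\sigma(i+1)$ (the paper phrases one direction as a contradiction and gets the second equivalence by contraposition, which is only a stylistic difference). The only nitpick is terminological: what you call the ``forward implications'' are the implications from the $\sigma$-comparison to the $a$-comparison, i.e.\ the converse directions of the displayed equivalences, but your dichotomy argument does correctly upgrade them to the full equivalences.
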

\begin{proof}
First, assume that $a_i > a_{i+1}$ and assume for a contradiction that $\sigma(i)\leq\sigma(i+1)$. Then any $j\geq i+1$ with $\sigma(j)\leq \sigma(i)$ which is counted in $a_{i}$ also satisfies $\sigma(j)\leq \sigma(i+1)$, meaning that it is counted in $a_{i+1}$ as well. Furthermore, in $a_i$ we count $j=i$ but not $j=i+1$, whereas we do count $j=i+1$ in $a_{i+1}$. This implies $a_i\leq a_{i+1}$, contradiction to our assumption.
Conversely, assume that $\sigma(i)>\sigma(i+1)$. Then any $j\geq i+1$ with $\sigma(j)\leq \sigma(i+1)$ which is counted in $a_{i+1}$ also satisfies $\sigma(j)\leq \sigma(i)$, meaning that it is counted in $a_{i}$ as well. Furthermore, in $a_i$ we count $j=i$ which is not counted in $a_{i+1}$. This implies $a_i > a_{i+1}$.
Lastly, the second equivalence follows from the first by contraposition.
\end{proof}

\begin{lemma}\cite[Proposition 1.3.12]{S97}
\label{inversionBijection}
The map $\sigma\mapsto \text{Inv}(\sigma)$ is a bijection between permutations of $[n]$ and sequences $(a_1,\dots,a_n)$ with $a_i\in \{1,\dots,n+1-i\}$.
\end{lemma}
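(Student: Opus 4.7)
The plan is to establish the bijection by combining a well-definedness check, an explicit inverse construction (yielding injectivity), and a cardinality count. On the source side, both sets have cardinality $n!$: clearly $|S_n| = n!$, and by the product rule the number of sequences $(a_1,\dots,a_n)$ with $a_i \in \{1,\dots,n+1-i\}$ is $n \cdot (n-1) \cdots 2 \cdot 1 = n!$. So it suffices to show that $\sigma \mapsto \mathrm{Inv}(\sigma)$ is well-defined with the stated codomain and injective, and the bijection follows from a pigeonhole/cardinality argument.

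For well-definedness, I would observe that the set $\{j \geq i : \sigma(j) \leq \sigma(i)\}$ always contains $j = i$ and is a subset of $\{i,i+1,\dots,n\}$, so $1 \leq a_i \leq n+1-i$ as required. For injectivity, I would reconstruct $\sigma$ from its inversion sequence by induction on $n$. The base case $n=1$ is trivial. For the inductive step, note that
\[
a_1 \;=\; |\{j \geq 1 : \sigma(j) \leq \sigma(1)\}| \;=\; |\{k \in [n] : k \leq \sigma(1)\}| \;=\; \sigma(1),
\]
so $\sigma(1)$ is recovered. The truncated permutation $(\sigma(2),\dots,\sigma(n))$ is a bijection from $\{2,\dots,n\}$ to $[n] \setminus \{a_1\}$; composing with the unique order-preserving map $[n]\setminus\{a_1\} \to [n-1]$ yields a permutation $\tau \in S_{n-1}$. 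Since the defining formula for $a_i$ only involves the relative order of the values $\sigma(i),\sigma(i+1),\dots,\sigma(n)$, this relabeling preserves inversion statistics, i.e., $\mathrm{Inv}(\tau) = (a_2,\dots,a_n)$. By the induction hypothesis $\tau$ is determined, hence so is $\sigma$.

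The only mild bookkeeping obstacle is this last observation that the inversion entries are invariant under order-preserving relabeling of the values. I would make this precise by noting that for each $i \geq 2$ and each $j \geq i$, the inequality $\sigma(j) \leq \sigma(i)$ is equivalent to $\tau(j-1) \leq \tau(i-1)$ (since the map from $[n]\setminus\{a_1\}$ to $[n-1]$ is order-preserving), whence the $i$-th entry of $\mathrm{Inv}(\sigma)$ and the $(i-1)$-th entry of $\mathrm{Inv}(\tau)$ coincide. Once this is in place, injectivity is immediate and the cardinality argument concludes the proof.
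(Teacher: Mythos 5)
Your proof is correct. The paper does not prove this lemma itself---it is quoted from Stanley \cite[Proposition 1.3.12]{S97}---and your argument (checking $1\leq a_i\leq n+1-i$, recovering $\sigma(1)=a_1$, inducting via the order-preserving relabeling $[n]\setminus\{a_1\}\to[n-1]$, and finishing with the $n!$ cardinality count) is the standard proof of that cited fact. A minor simplification: the counting step is dispensable, since your reconstruction procedure applies verbatim to an arbitrary sequence with $a_i\in\{1,\dots,n+1-i\}$ and therefore directly produces a two-sided inverse map; but the injectivity-plus-cardinality route you chose is equally valid for finite sets.
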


Now with all preparations done, we can finally prove the second main result of the article. 

\begin{theorem}[\Cref{thm: intro_chow_poly_Bn}]
\label{thm: Chow_poly_Bn}
The Chow polynomial for type $B$ has the expansion
\[H(\B_n)(x)=\sum_{(a_1,\dots,a_n)}\left(\prod_{i=1}^n (2a_i-1)\right)x^{des(a_1,\dots,a_n)}(x+1)^{n-1-2des(a_1,\dots,a_n)}\]
where the sum ranges over all tuples $(a_1,\dots,a_n)$ with $a_i\in \{1,\dots,n+1-i\}$ such that $a_1\leq a_2$ and also that $a_i>a_{i+1}$ implies $a_{i-1}\leq a_{i}$ for $i\in\{2,\dots,n-1\}$.
\end{theorem}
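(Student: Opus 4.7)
The plan is to directly apply Theorem \ref{thm:chow_formula} to the intersection lattice $\Pi_n^B$ of $\B_n$ with the $R$-labeling $\lambda$ developed in the preceding lemmas, then reparameterize the resulting sum using inversion sequences. This is the exact type $B$ analogue of how Stump's Theorem \ref{thm:chow_formula_A_type} is obtained in type $A$.

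First I would identify the label sequence of a maximal chain. Every maximal chain in $\Pi_n^B$ from $\hat 0$ (all singletons) to $\hat 1$ has exactly $n$ cover relations, so it carries a length-$n$ sequence of labels. Applying Lemma \ref{setOfLabels} with $x=\hat 0$ and $y=\hat 1$, the set of labels equals $\{0,1,\ldots,n\}\setminus\{0\}=\{1,2,\ldots,n\}$ (here the zero block contributes the minimum $0$ that gets removed). Since there are $n$ cover relations and $n$ distinct available labels, every label appears exactly once, i.e.\ the label sequence is a permutation $\sigma\in S_n$.

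Next I would group the sum of Theorem \ref{thm:chow_formula} over maximal chains by their label sequence $\sigma$. Lemma \ref{numberOfChains} gives that the number of maximal chains with label sequence $\sigma$ is $\prod_{i=1}^n(2a_i-1)$, where $\mathrm{Inv}(\sigma)=(a_1,\ldots,a_n)$. Thus
\[
H_{\B_n}(t)=\sum_{\sigma\in S_n}\left(\prod_{i=1}^n(2a_i-1)\right)t^{\mathrm{des}(\sigma)}(t+1)^{n-1-2\mathrm{des}(\sigma)},
\]
where $\sigma$ ranges over permutations satisfying the two chain-level constraints imposed by Theorem \ref{thm:chow_formula}: $\sigma(1)<\sigma(2)$ and $\sigma(i)>\sigma(i+1)\Rightarrow\sigma(i-1)<\sigma(i)$ for $i\in\{2,\ldots,n-1\}$.

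Finally I would convert everything to the inversion statistic. Lemma \ref{inversionDescents} shows that $\sigma(i)>\sigma(i+1)$ if and only if $a_i>a_{i+1}$, and $\sigma(i)<\sigma(i+1)$ if and only if $a_i\leq a_{i+1}$. This simultaneously translates both the descent exponent $\mathrm{des}(\sigma)=\mathrm{des}(a_1,\ldots,a_n)$ and the two constraints into exactly the conditions $a_1\leq a_2$ and $a_i>a_{i+1}\Rightarrow a_{i-1}\leq a_i$ stated in the theorem. Lemma \ref{inversionBijection} then lets me reindex the sum as ranging over tuples $(a_1,\ldots,a_n)$ with $a_i\in\{1,\ldots,n+1-i\}$, yielding the claimed formula. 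No step is genuinely hard once the lemmas are in hand; the only point that needs a moment of care is the count of distinct labels on a maximal chain in Step 1, where one must remember to include the zero block (whose absolute minimum is $0$) when computing $M_1$ and then discard it as $\min M_1$.
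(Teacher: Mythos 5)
Your proposal is correct and follows essentially the same route as the paper: apply Theorem \ref{thm:chow_formula} to $\Pi_n^B$ with the $R$-labeling, observe that label sequences of maximal chains are permutations of $[n]$, group chains by label sequence and count them with Lemma \ref{numberOfChains}, then reindex by inversion sequences via Lemmas \ref{inversionBijection} and \ref{inversionDescents}. Your extra remark justifying via Lemma \ref{setOfLabels} that the label set of a full maximal chain is exactly $\{1,\dots,n\}$ is a correct detail the paper leaves implicit.
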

\begin{proof}
We have
\begin{align*}
H(\B_n)(x)&=\sum_{\mathcal F}x^{des(\lambda(\mathcal F))}(x+1)^{n-1-2des(\lambda(\mathcal F))}\\
&=\sum_{\sigma\in S_n}\sum_{\substack{\mathcal F\\ \lambda(\mathcal F)=(b_{\sigma(1)},\dots,b_{\sigma(n)})}}x^{des(\sigma)}(x+1)^{n-1-2des(\sigma)}\\
&=\sum_{\sigma\in S_n}\left(\prod_{i=1}^n (2a_i-1)\right)x^{des(\sigma)}(x+1)^{n-1-2des(\sigma)}\\
&=\sum_{(a_1,\dots,a_n)}\left(\prod_{i=1}^n (2a_i-1)\right)x^{des(a_1,\dots,a_n)}(x+1)^{n-1-2des(a_1,\dots,a_n)}.
\end{align*}
The first equality is Theorem \ref{thm:chow_formula} and the sum is indexed by the maximal saturated chains $\mathcal{F}$ with no double descents and no descent in the first position. In the second step, we split the sum according to the labels of the maximal chains and observe that $b_{\sigma(i)}>b_{\sigma(i+1)}$ is equivalent to $\sigma(i)>\sigma(i+1)$, allowing us to replace the descents of $\lambda(\mathcal F)$ by the descents of $\sigma$. In the third step we use Lemma \ref{numberOfChains}. In the last step we use Lemma \ref{inversionBijection} and Lemma \ref{inversionDescents} to switch the index set of the summation from permutations to inversion sequences and to replace the descents of $\sigma$ by the descents of $(a_1,\dots,a_n)$.
\end{proof}

\subsection{Chow polynomial arithmeticity for $\D_{n,s}$-arrangements}\label{subsec:arthmeticity_chow_EL}

In this subsection we show that the Chow polynomials associated to matroids arising from $\D_{n,s}$-arrangements satisfy an arithmeticity property with respect to a fixed $n$ and varying $s$. There are two approaches, one of them  uses a recursive formula for Chow polynomials of matroids in terms of their minors and the corresponding reduced characteristic polynomials, the second utilizes the usage of EL-labelings of the their intersection lattices and is inspired by the discussions in \cite{DNG19}.

The main result of this subsection is the following:
\begin{thm}
\label{prop:chow_arithmeticity_EL}
\label{thm1}
\label{thm: chow_arithmeticity_char_polyn}
    The Chow polynomial is arithmetic on the arrangements $\D(n,s)$. That is,
    \[ H(\D_{n,n})(t) -H(\D_{n,n-1})(t) = \dots = H(\D_{n,1})(t) -H(\D_{n,0})(t). \]
    In particular
    \[ H(\D_{n,s})(t) = \frac{s}{n} H(\B_{n})(t) + \frac{n-s}{n} H(\D_{n})(t). \]
\end{thm}

In the first proof of \Cref{thm1} we use the following result of \cite{FMSV24}:

\begin{theorem}\label{thm:chow-formula-via-char-poly}
    Let $M$ be a loopless matroid. Then, the Chow polynomial of $M$ satisfies
    \begin{align*}
        H(M) = \sum_{F\in \mathcal{L}(M),\ F\neq \emptyset} \overline{\chi}(M|_{F})H(M/F),
    \end{align*}
    where $M|_{F}$ is the restriction of $M$ to $F$ and 
    $M/F$ the contraction of $M$ to $F$.
\end{theorem}

Recursively applying this to a matroid $M$ yields the following corollary.

\begin{cor}\label{cor1}
    The Chow polynomial of a loopless matroid satisfies:
    \begin{align*}
        H(M) = \sum_{\mathcal{C}} \prod_j \overline{\chi}([C_j,C_{j+1}]_{\L(M)}),
    \end{align*}
    where the sum runs over all flags of flats in $M$ starting at $\emptyset$ and ending in $M$ and $\overline{\chi}([C_j,C_{j+1}]_{\L(M)})(t)$ denotes the reduced characteristic polynomial of the lattice of flats restricted to $[C_j,C_{j+1}]$.
\end{cor}

\begin{proposition}\label{prop:Chow-is-union-additive}
    The Chow polynomial of a loopless matroid is union-additive.
\end{proposition}
\begin{proof}
    The claim follows by union-additivity of the reduced characteristic polynomial, \Cref{thm:chow-formula-via-char-poly}, Proposition \ref{prop:recursion-from-above} and induction.

    Another proof is via Proposition \ref{cor:sum-over-all-flags} and \Cref{cor1}. Yet another proof is via the union-additivity of the flag $h$-vector and \cite[Theorem 4.25]{FMV24}.
\end{proof}

\begin{proof}[Proof of \Cref{prop:chow_arithmeticity_EL} via characteristic polynomial]
    The claim follows from \Cref{prop:Chow-is-union-additive} and \Cref{prop:arithmeticity-for-Dns}.
\end{proof}

Now we turn to the approach via EL-labeling.

Recall from \Cref{def: inters_lat_B_n} that the intersection lattice of $\B_n$- arrangements can be viewed in the following way:

Define $\Pi_n^B$ as the set of all partitions $\pi$ of $\langle n\rangle :=\{-n,\dots,n\}$ with the properties
\begin{enumerate}
\item If $B\in \pi$ then $-B\in \pi$
\item If $i\in B$ and $-i\in B$ for some $i$ and some $B\in \pi$, then $0\in B$.
\end{enumerate}
This defines a poset by setting $x\prec y$ if for each $B\in x$ there is a $B'\in y$ with $B\subseteq B'$. For any $\pi\in\Pi_n^B$, the block containing $0$ is called the \textbf{zero block}. In the following we denote the zero block by a subscript $0$.

\begin{rem}
\label{rem: lattice_translation}
    Due to \cite[Theorem 7]{DNG19} the above intersection lattice can be translated to a lattice $\Pi_{n,\Sigma}$ of signed partitions of $[[n]]:=\{\bar{1},1,\ldots,\bar{n},n\}$ as introduced in \cite{DNG19}. Therefore we set $\Sigma:=\{0\}$, $-k\in\langle n\rangle$ becomes
    $\bar{k}\in[[n]]$ and a negative block $-R$ translates to $\bar{R}$. 
\end{rem}

Next we would like to apply the EL-labeling theory in \cite{DNG19} to our setting. In order to do so, we have to translate their concepts regarding the lattice $\Pi_{n,\Sigma}$ into the language of the lattice $\Pi_n^B$, using the correspondence described in \Cref{rem: lattice_translation}.

The following definitions and results are to our setting adjusted version of \cite{DNG19}.
We start with a slight modification of \Cref{def: edge_labeling} to simplify the below discussion.

\begin{df}
    Let $P$ be a bounded poset with top element $\hat{1}$ and bottom element $\hat{0}$. Let $\text{Cov}(P) \subset P \times P$ be the set of cover relations in $P$.
    An \textbf{edge-labeling} of $P$ is a map $\lambda \colon \text{Cov}(P) \rightarrow \Lambda$ where $\Lambda$ is a poset. Given such a labeling, each maximal chain $c=(x \precdot z_1 \precdot \dots \precdot z_r \precdot y)$ between two elements $x \preceq y $ is called \textbf{increasing} if $\lambda(c) = (\lambda(x,z_1),\dots,\lambda(z_r,y))$ is strictly increasing, and \textbf{decreasing} if $\lambda(c)$ is decreasing. 
\end{df}

Note that the difference between \Cref{def: edge_labeling} and this definition lies in the notion of \emph{increasing}, which now requires a strictly increasing sequence of labels, and the according changes of the notion of \emph{decreasing}.

\begin{df}
    Let $P$ be a bounded ranked poset. We say a maximal chain with labeling $(a_1,\dots,a_k)$ lexicographically precedes a maximal chain with labeling $(b_1,\dots,b_k)$ if $a_i<b_i$ in the first coordinate where the labels differ. An \textbf{edge lexicographic labeling}, or \textbf{EL-labeling} for short, of $P$ is an edge labeling such that in each closed interval $[x,y] \subseteq P$ there is a unique increasing maximal chain and this chain lexicographically precedes all other maximal chains of $[x,y]$.
\end{df}

\begin{rem}
    Every EL-labeling is an $R$-labeling.
\end{rem}

The next result will be important later on in this note, when we discuss an EL-labeling for intersection lattices of the $\D_{n,s}$ arrangements.

\begin{lem}\cite[Lemma 16]{DNG19}
    \label{lem: subposet_criterion}
    Let $P$ be a bounded and ranked poset, having EL-labeling $\lambda$. Let $Q\subseteq P$ be a ranked subposet of $P$ containing $\hat{0}$ and $\hat{1}$, with rank function given by restricting the one of $P$. Then, if for all $x\preceq y$ in $Q$ the unique increasing maximal chain in $[x,y]\subseteq P$ is also contained in $Q$, the edge labeling $\lambda$
    restricted to $Q$ is an EL-labeling of $Q$.
\end{lem}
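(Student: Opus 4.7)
The plan is to reduce the EL-property of $(Q,\lambda|_Q)$ directly to the known EL-property of $(P,\lambda)$, so essentially no new combinatorics is needed — the entire content lies in matching up maximal chains of $Q$ with maximal chains of $P$.

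First I would observe that because $Q$ is ranked with rank function inherited from $P$, any cover relation $a \precdot b$ in $Q$ satisfies $r_P(b)-r_P(a)=1$ and is therefore a cover relation in $P$ as well. This has two consequences: the restricted labeling $\lambda|_{\text{Cov}(Q)}$ is well defined, and every maximal chain of an interval $[x,y]_Q$ is also a maximal chain of $[x,y]_P$ of the correct length $r_P(y)-r_P(x)$.

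Next, fix $x \prec y$ in $Q$. By hypothesis, the unique $\lambda$-increasing maximal chain $c^\ast$ of $[x,y]_P$ is contained in $Q$, so $c^\ast$ is a maximal chain of $[x,y]_Q$ as well; this yields existence of an increasing maximal chain of $[x,y]_Q$. For uniqueness, any other increasing maximal chain of $[x,y]_Q$ would, by the previous paragraph, be a second increasing maximal chain of $[x,y]_P$, contradicting the EL-property of $P$.

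Finally, the lexicographic precedence property transfers for free: $c^\ast$ lexicographically precedes every maximal chain of $[x,y]_P$ other than itself, and since every maximal chain of $[x,y]_Q$ is in particular such a maximal chain of $[x,y]_P$, it follows that $c^\ast$ also lexicographically precedes every other maximal chain of $[x,y]_Q$. Thus $\lambda|_Q$ is an EL-labeling of $Q$. The only subtle point I foresee is carefully invoking the ranked-subposet hypothesis at the beginning to match cover relations; after that, everything is a purely formal transfer from $P$ to $Q$.
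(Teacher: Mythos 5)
Your argument is correct: the key observation that a cover in $Q$ has rank difference one in $P$ and is therefore a cover in $P$ makes every maximal chain of $[x,y]_Q$ a maximal chain of $[x,y]_P$ with the same labels, after which existence, uniqueness, and lexicographic precedence all transfer formally, exactly as in the standard argument. The paper itself gives no proof but cites \cite[Lemma 16]{DNG19}, and your proof is essentially that source's transfer argument, so there is nothing to add.
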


We are now able to reformulate the EL-labeling proposed in \cite{DNG19}.

\begin{df}
    For a block $R$ of some partition $\pi\in\Pi_n^B$, the \textbf{representative} of $R$, denoted by $r(R)$, is defined as the minimum element $i \in [n]$ such that $i \in R$ or $-i \in R$. A block $R \subseteq \langle n\rangle$ is called \textbf{normalized} if $r(R)$ belongs to $R$.
\end{df}

\begin{ex}
    Consider blocks $ R = \{2,-4,5\}$ and $-R = \{-2,4,-5\}$. Then $r(R) = r(-R) = 2$ and $R$ is normalized while $-R$ is not.
\end{ex}

There are three types of cover relations, that is, edges in the Hasse diagram, we have to distinguish. 

\begin{df}
\label{def:edge_coh}
An edge $\mathcal{E}_{xy}$ of the Hasse diagram of $\Pi_n^B$ is called:
\begin{itemize}
    \item \textbf{signed} if $x_0 \subsetneq y_0$, where these denote the zero blocks of $x$ and $y$, respectively.
    \item \textbf{coherent} if $R\cup R^\prime \in y$ for some normalized non-zero blocks $R, R^\prime \in x, R\neq R^\prime$;
    \item \textbf{non-coherent} if $R \cup -{R^\prime} \in y $ for some normalized non-zero blocks $R, R^\prime \in x, R\neq R^\prime $
\end{itemize}
An edge $\mathcal{E}_{xy}$ is called \textbf{unsigned} if it is either coherent or non-coherent.
\end{df}

Using all of the above mentioned notions we are able to state a slightly modified version of the edge-labeling in \cite{DNG19}.

\begin{df}
\label{def:concrete_el_labeling}
    We define the following edge labeling $\lambda$ of $\Pi_n^B$.
    \begin{itemize}
        \item Let $\mathcal{E}_{xy}$ be an unsigned edge and let $R, R^\prime \in x $ such that $R \cup R^\prime \in y$ with representatives $i$ and $j$ of $R$ and $R^\prime$, respectively. Then:
        \begin{align*}
            \lambda(x,y)=
            \begin{cases}
                (0,\text{max}(i,j)) & \text{if $\mathcal{E}_{xy}$ coherent;}\\
                (2,\text{min}(i,j)) & \text{if $\mathcal{E}_{xy}$ is non-coherent.}
            \end{cases}
        \end{align*}
        \item Let $\mathcal{E}_{xy}$ be a signed edge. Then
        \[
            \lambda(x,y)=(1,1).
        \]
    \end{itemize}
    Labels are ordered lexicographically.
\end{df}

The next result follows from of \cite[Theorem 7]{DNG19} and \cite[Theorem 18]{DNG19}. 

\begin{thm}
    The labeling $\lambda$ of the definition above is an EL-labeling of $\Pi_n^B$.
\end{thm}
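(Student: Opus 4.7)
The plan is to deduce the result from \cite[Theorem~18]{DNG19} by transporting their EL-labeling on the lattice $\Pi_{n,\Sigma}$ of signed partitions to $\Pi_n^B$ along the poset isomorphism recorded in \Cref{rem: lattice_translation}. So the first step is to make this isomorphism explicit: writing $\varphi\colon \Pi_n^B\to \Pi_{n,\Sigma}$ for the map that sends the zero block $B_0$ to the $\Sigma$-block $\{0\}\cup\{|k|,\overline{|k|}:k\in B_0\setminus\{0\}\text{ with appropriate signs}\}$, and that replaces each symmetric pair $\{B,-B\}$ of non-zero blocks by a single signed block (turning every $-k\in B$ into $\bar k$). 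By \cite[Theorem~7]{DNG19} this is a rank-preserving order isomorphism, so it suffices to show that $\lambda$ corresponds, under $\varphi$, to the labeling used in \cite[Theorem~18]{DNG19}.

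The second step is a dictionary check on cover relations. The three types of edges introduced in \Cref{def:edge_coh} — coherent, non-coherent, signed — must be identified with the three types of merges described for $\Pi_{n,\Sigma}$: (i) merging two distinct normalized non-zero blocks, (ii) merging a normalized non-zero block with the signed complement of another, and (iii) absorbing a non-zero block pair into the zero block. The notions of \emph{normalized block} and \emph{representative} in $\Pi_n^B$ translate, under $\varphi$, precisely to the requirement that the minimum-absolute-value element of the block is unbarred, which is exactly the normalization convention used in \cite{DNG19}. This step is routine case-checking.

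With the dictionary in place, one then verifies that the labels agree: the coherent label $(0,\max(i,j))$, the signed label $(1,1)$, and the non-coherent label $(2,\min(i,j))$ from \Cref{def:concrete_el_labeling} match the labels in \cite[Definition~17]{DNG19} up to the identification of label posets, and the lexicographic order chosen here coincides with theirs once the first coordinates are matched. \Cref{lem: subposet_criterion} is not needed at this stage because we are treating the whole lattice, not a subposet. Invoking \cite[Theorem~18]{DNG19} then gives that $\lambda$ is an EL-labeling of $\Pi_{n,\Sigma}$, which transports along $\varphi$ to an EL-labeling of $\Pi_n^B$.

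The main obstacle is really just the bookkeeping in step two: one must be careful that the normalization convention used in \Cref{def:edge_coh} (requiring $r(R)\in R$) matches the one implicit in \cite{DNG19}, and that the three edge types do not overlap or leave anything out. All of this is a finite case analysis with no hidden content; the real mathematics is entirely contained in \cite[Theorems 7 and 18]{DNG19}.
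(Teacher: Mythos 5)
Your proposal is correct and matches the paper's treatment: the paper offers no independent argument for this theorem, stating only that it follows from \cite[Theorem 7]{DNG19} and \cite[Theorem 18]{DNG19} via the translation of $\Pi_{n,\Sigma}$ to $\Pi_n^B$ recorded in \Cref{rem: lattice_translation}, which is exactly the transport-along-the-isomorphism plan you describe. Your explicit dictionary check on cover types and labels simply spells out the routine verification the paper leaves implicit.
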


In the rest of this subsection we show that the above labeling is also an EL-labeling for the intersection lattices of $\D_n$- and $\D_{n,s}$-arrangements and examine their corresponding Chow polynomials in the spirit of \Cref{thm:chow_formula}.

We start by explicitly describing the intersection lattices of arrangements $\D_n$ and $\D_{n,s}$. According to \cite[Theorem 7]{DNG19} the intersection lattice $\latDn$ is isomorphic to the subposet of $\latBn$ consisting of all elements $x \in \latBn$ such that $|x_0|\not= 2$. In other words, we take  $\latBn$ and delete all partitions and their connecting edges that contain a block of the form $\{k,0,-k\}$ for $k \in [n]$.
From the bijection between $\latBn$ $\Pi_n^B$, it follows that the intersection lattice of $\D_{n,s}$ is isomorphic to the subposet of $\latBn$ consisting of all partitions of $\latBn$ without those that contain a block of the form $\{k,0,-k\}$ for $k \in \{s+1,\dots,n\}$.

The following theorem establishes that the EL-labeling defined in \Cref{def:concrete_el_labeling} is preserved when we go from $\latBn$ to $\latDns$. 

\begin{thm}
    \label{thm:EL_Dns}
    The EL-labeling $\lambda$ from Definition \ref{def:concrete_el_labeling} is an EL-labeling for the intersection lattice $\latDns $, for $s\in \{0,\dots,n\}$.
\end{thm}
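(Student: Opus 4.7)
The plan is to invoke \Cref{lem: subposet_criterion} with $P = \latBn$ and $Q = \latDns$, since we have already established that $\lambda$ is an EL-labeling of $\latBn$. First, I will observe that $\latDns$ contains the top and bottom elements of $\latBn$, because neither the discrete partition nor the one-block partition (for $n \geq 2$) is of the excluded form $\{k,0,-k\}$ with $k > s$; the remaining trivial cases are immediate. The compatibility of the rank function with covers in $\latDns$ will emerge as a byproduct of the main step.

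The core task is to show that for every $x \preceq y$ in $\latDns$, the unique increasing maximal chain in $[x,y] \subseteq \latBn$ is entirely contained in $\latDns$. To this end, I will first extract the structure of such a chain directly from \Cref{def:concrete_el_labeling}. Labels are compared lexicographically with first coordinates ordered as $0 < 1 < 2$, and every signed edge carries the single label $(1,1)$. Consequently, any strictly increasing sequence of labels decomposes as an initial run of coherent labels $(0,\cdot)$ with strictly increasing second coordinates, followed by at most one signed label $(1,1)$, and terminating with a run of non-coherent labels $(2,\cdot)$ with strictly increasing second coordinates.

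The key observation is that coherent and non-coherent edges leave the zero block unaltered; only a signed edge modifies it. Since at most one signed edge can appear, the zero block of every intermediate element equals either $x_0$ (before the signed edge, if any) or $y_0$ (afterwards). As $x, y \in \latDns$, neither $x_0$ nor $y_0$ is of the forbidden form $\{k, 0, -k\}$ with $k > s$, and hence every intermediate partition in the chain also lies in $\latDns$. This simultaneously verifies the hypothesis of \Cref{lem: subposet_criterion} and shows that the restricted rank function of $\latBn$ turns $\latDns$ into a ranked subposet, since the increasing chain realises the full rank difference from $P$ inside $Q$, ruling out covers of rank difference greater than one.

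I expect the one conceptual subtlety to be the structural description of the increasing chain: it is precisely the fact that all signed edges share the identical label $(1,1)$ that forces at most one signed edge in any strictly increasing chain, which in turn pins the zero block along the chain to only the two admissible values $x_0$ and $y_0$. Once that bottleneck is made explicit, admissibility of every intermediate element is an immediate consequence of $x, y \in \latDns$, and the application of \Cref{lem: subposet_criterion} closes the argument.
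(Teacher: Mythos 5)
Your proof is correct, and it takes a cleaner, more direct route than the paper's. The paper argues by contraposition: given any maximal chain in $[x,y] \subseteq \latBn$ that leaves $\latDns$, it invokes three auxiliary lemmata (about edges crossing between $\latDn$, $\latDnu \setminus \latDn$, and $\latBn \setminus \latDns$, namely \Cref{lem:out}, \Cref{lem:in}, and \Cref{lem:Dns_noegde}) to locate two signed edges, producing two $(1,1)$ labels and hence ruling out that the chain is increasing. Your proof instead analyzes the increasing chain directly: since a strictly increasing lexicographic label sequence can contain $(1,1)$ at most once, there is at most one signed edge; since only signed edges alter the zero block, the zero block along the chain equals $x_0$ before that edge and $y_0$ after it; and since $x,y \in \latDns$, neither of these is a forbidden block $\{k,0,-k\}$ with $k > s$, so every intermediate partition stays in $\latDns$. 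This avoids the three auxiliary lemmata entirely and makes the mechanism transparent (the zero block along an increasing chain can only take the two endpoint values). What the paper's choice buys in return is that one of its auxiliary lemmata (\Cref{lem:Dns_noegde}) is reused later in the arithmeticity argument (\Cref{lem:one_jump}), so developing it here serves a second purpose. Your side remark that the increasing chain being contained in $Q$ also rules out covers of rank difference greater than one is a nice bonus observation, though for lattices of flats this is already known.
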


\Cref{fig: EL_D_3} below illustrates the EL-labeling for the intersection lattice of the reflection arrangement $\D_3$.
In order to prove the above theorem we need the following three technical lemmata.

\begin{figure}[h]
            \centering
            \begin{tikzpicture}[scale=1,
            orangeline/.style={ draw=orange,line width = 0.75},
            cyanline/.style={draw=cyan, line width=0.75},
            violetline/.style={draw=violet, line width=0.75},
            greenline/.style={draw=green, line width=0.75},
            grayline/.style={draw=gray, line width=0.75}
            ]
            \node (1) at (0,0)  {\scriptsize $1|\sm 1|2|\sm2|3|\sm3$};
            \node (2) at (-5,2)  {\scriptsize$2 3|\sm 2 \sm 3|1|\sm 1$};
            \node (3) at (-3,2)  {\scriptsize$1 3|\sm 1 \sm 3|2|\sm 2$};
            \node (4) at (-1,2)  {\scriptsize$1 2|\sm 1 \sm 2|3|\sm 3$};
            \node (5) at (1,2)  {\scriptsize$2 \sm 3|\sm 2 3|1|\sm 1$};
            \node (6) at (3,2)  {\scriptsize$1 \sm 3|\sm 13|2|\sm 2$};
            \node (7) at (5,2)  {\scriptsize$1 \sm 2|\sm 1 2|3|\sm 3$};
            \node (8) at (-6,4)  {\scriptsize$123|\sm 1\sm2\sm3$};
            \node (9) at (-4,4)  {\scriptsize$1 \sm 2 \sm 3|\sm 1 2 3$};
            \node (10) at (-2,4)  {\scriptsize$230\sm 2 \sm 3|1|\sm 1$};
            \node (11) at (0,4)  {\scriptsize$130\sm1 \sm3|2|\sm2$};
            \node (12) at (2,4)  {\scriptsize$120\sm1\sm2|3|\sm3$};
            \node (13) at (4,4)  {\scriptsize$12\sm3|\sm1\sm2 3$};
            \node (14) at (6,4)  {\scriptsize$1 \sm2 3| \sm1 2 \sm 3$};
            \node (15) at (0,6)  {\scriptsize$1230\sm 1 \sm 2 \sm3$};

            \draw (1)--(2)[cyanline];
            \draw (1)--(3)[cyanline];
            \draw (1)--(4)[greenline];
            \draw (1)--(5)[grayline] ;
            \draw (1)--(6)[orangeline];
            \draw (1)--(7)[orangeline] ;
            
            \draw (2)--(8)[greenline] ;
            \draw (2)--(9)[orangeline];
            \draw (2)--(10)[violetline];

            \draw (3)--(8)[greenline];
            \draw (3)--(11)[violetline];
            \draw (3)--(14)[orangeline];

            \draw (4)--(8)[cyanline];
            \draw (4)--(12)[violetline];
            \draw (4)--(13)[orangeline];

            \draw (5)--(10)[violetline];
            \draw (5)--(13)[greenline];
            \draw (5)--(14)[orangeline];

            \draw (6)--(9)[orangeline];
            \draw (6)--(11)[violetline];
            \draw (6)--(13)[greenline];

            \draw (7)--(9)[orangeline];
            \draw (7)--(12)[violetline];
            \draw (7)--(14)[cyanline];

            \draw (8)--(15)[violetline];
            \draw (9)--(15)[violetline];
            \draw (10)--(15)[violetline];
            \draw (11)--(15)[violetline];
            \draw (12)--(15)[violetline];
            \draw (13)--(15)[violetline];
            \draw (14)--(15)[violetline];

            \begin{scope}[shift={(5.5,1.3)}]
                \draw(0,0)--(0.5,0)[greenline] node[right] {\scriptsize $(0,2)$};
                \draw(0,-0.35)--(0.5,-0.35)[cyanline] node[right] {\scriptsize $(0,3)$};
                \draw(0,-0.7)--(0.5,-0.7)[violetline] node[right] {\scriptsize $(1,1)$};
                \draw(0,-1.05)--(0.5,-1.05)[orangeline] node[right] {\scriptsize $(2,1)$};
                \draw(0,-1.4)--(0.5,-1.4)[grayline] node[right] {\scriptsize $(2,2)$};
            \end{scope}
            \end{tikzpicture}
        \caption{$\mathcal{L}(\D_3)$ with the EL-labeling.}
        \label{fig: EL_D_3}
    \end{figure}
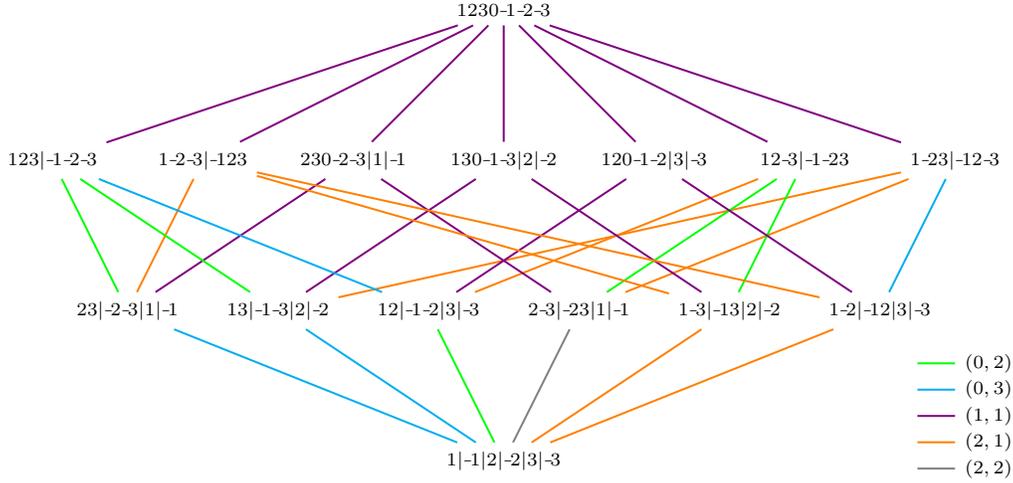
    
\begin{lem}
\label{lem:out}
Let $x \in \latDn$ and $y \in \latBn\setminus\latDn$ with $x \precdot y$. Then $\edge$ is a signed edge.
\end{lem}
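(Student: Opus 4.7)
The plan is to leverage the dichotomy of cover relations in $\latBn$ together with the fact that membership in $\latDn$ is a property of the zero block alone.

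First, I would recall the two types of cover relations $x \precdot y$ described earlier in the paper. In the first (signed) type, a pair of non-zero blocks $B, -B$ is absorbed together with the zero block into a new larger zero block, so $x_0 \subsetneq y_0$. In the second (unsigned, either coherent or non-coherent) type, two pairs of non-zero blocks $\{B, -B\}$ and $\{B', -B'\}$ are merged among themselves while the zero block is untouched, so $x_0 = y_0$. This is precisely the content of \Cref{def:edge_coh}.

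Next, I would invoke the characterization of the subposet $\latDn \subseteq \latBn$: an element $z \in \latBn$ belongs to $\latDn$ if and only if $|z_0| \neq 2$ (that is, the condition depends only on the zero block). In particular, for any partitions $z, z' \in \latBn$ with $z_0 = z'_0$, we have $z \in \latDn \iff z' \in \latDn$.

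Combining these two facts finishes the argument. Suppose for contradiction that $\edge$ is unsigned. Then $x_0 = y_0$, so $|y_0| = |x_0| \neq 2$ (the latter because $x \in \latDn$), whence $y \in \latDn$, contradicting $y \in \latBn \setminus \latDn$. Therefore $\edge$ must be signed. I do not anticipate a genuine obstacle here; the lemma is really a bookkeeping statement about how the $\latDn$-condition interacts with the two possible shapes of a cover relation, and the whole proof fits in a few lines.
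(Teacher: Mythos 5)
Your proof is correct and takes essentially the same route as the paper: both arguments rest on the fact that membership of a partition in $\latDn\subseteq\latBn$ is determined by its zero block, together with the classification of cover relations, so a cover leaving $\latDn$ must enlarge the zero block and hence be signed. You phrase it contrapositively (an unsigned edge fixes the zero block, so it cannot exit $\latDn$), while the paper argues directly that the merge creating the block $\{k,0,-k\}$ must be the signed one, but the substance is identical.
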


\begin{proof}
    We know that $y \in \latBn\setminus\latDn$, therefore $y$ contains a block of the form $\{k,0,-k\}$, for some $k \in [n]$. At the same time, $x$ cannot contain such a block. Hence, the only possible way to go from $x$ to $y$ is via merging $\{k\}$ and $\{-k\}$ which creates the signed block $\{k,0,-k\}$. Thus $\edge$ is signed.
\end{proof}

\begin{lem}
\label{lem:in}
Let $x \in \latBn\setminus\latDn$ and $y \in \latDn$ with $x \precdot y$. Then $\edge$ is a signed edge.
\end{lem}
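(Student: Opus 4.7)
The plan is to contrast the two possible types of cover relations in $\latBn$ (equivalently, in $\Pi_n^B$) with the defining property that distinguishes $\latDn$ from $\latBn$. Recall that the cover relations $x \precdot y$ in $\Pi_n^B$ come in exactly two shapes: either a \emph{signed} move, in which some non-zero block $B$ is merged together with $-B$ and the current zero block $x_0$ to form a strictly larger zero block $x_0 \cup B \cup (-B)$, or an \emph{unsigned} move, in which two distinct non-zero blocks (and their negatives) are merged, leaving the zero block unchanged. This dichotomy is precisely what is needed.

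The key observation is that membership in $\latBn \setminus \latDn$ is characterised purely in terms of the zero block: by the description of $\latDn$ as a subposet, $x \in \latBn \setminus \latDn$ means that the zero block of $x$ has the form $\{k, 0, -k\}$ for some $k \in [n]$. My proof would then proceed by contradiction: assume $\edge$ is unsigned. Then the zero block is preserved by the cover relation, so $y$ still contains the block $\{k, 0, -k\}$, contradicting $y \in \latDn$. Hence $\edge$ must be signed.

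There is no real obstacle here beyond keeping the zero-block bookkeeping straight; the statement is essentially a one-step corollary of the classification of cover relations in $\Pi_n^B$, dual in spirit to the preceding \Cref{lem:out}. The only thing to be careful about is to invoke the correct direction of the zero-block growth: in \Cref{lem:out} the signed move creates the forbidden block in $y$, whereas here the forbidden block is already present in $x$ and the signed move is what is needed to absorb (enlarge) it into something permitted in $\latDn$.
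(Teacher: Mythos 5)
Your proof is correct and takes essentially the same approach as the paper: both rest on the classification of cover relations in $\Pi_n^B$ together with the observation that $\latDn$ is characterized by the zero block not being of the form $\{k,0,-k\}$. The paper phrases it directly (the only way to reach a $y$ without such a block is to absorb $\{k,0,-k\}$ into a larger zero block, i.e.\ a signed merge), while you phrase it by contradiction (an unsigned cover would leave the zero block untouched); these are the same argument.
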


\begin{proof}
    We know that $x \in \latBn\setminus\latDn$, therefore $x$ contains a block of the form $\{k,0,-k\}$ for some $k \in [n]$. At the same time, $y$ cannot contain such a block. Hence, the only possible ways to go from $x$ to $y$ is via merging $\{k,0,-k\}$ with some blocks $A$ and $-A$ of $x$. This creates the signed block $A \cup -A \cup \{k,0,-k\}$  Thus $\edge$ is signed.
\end{proof}

\begin{lem}
\label{lem:Dns_noegde}
Let $1\leq u < s \leq n$. Moreover let $x \in \latDnu \setminus \latDn$ and $y \in \latDns \setminus \latDnu$ with either $\text{rk}(y) = \text{rk}(x)+1$ or $\text{rk}(x) = \text{rk}(y)+1$. Then in either case there is no edge between $x$ and $y$.
\end{lem}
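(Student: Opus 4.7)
The plan is to reduce the claim to a cardinality obstruction on the zero blocks of $x$ and $y$, combined with the explicit description of cover relations in $\Pi_n^B$ recalled earlier.

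The key observation I would start from concerns the exact shape of the zero blocks. Recall that $\latDns$ is the subposet of $\latBn$ consisting of those partitions which contain no block equal to $\{k,0,-k\}$ for $k>s$. Since any partition has a unique block containing $0$, the hypothesis $x \in \latDnu \setminus \latDn$ forces the zero block $x_0$ of $x$ to be of the form $\{k,0,-k\}$ for some $k \in \{1,\dots,u\}$. By the same reasoning, $y \in \latDns \setminus \latDnu$ forces $y_0 = \{k',0,-k'\}$ for some $k' \in \{u+1,\dots,s\}$. In particular $k \neq k'$, so that $x_0 \neq y_0$ while still $|x_0| = |y_0| = 3$.

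Next, I would invoke the description of covers in $\Pi_n^B$ from the beginning of Subsection 4.2: whenever $a \precdot b$, either the zero block is unchanged ($b_0 = a_0$, when two non-zero blocks of $a$ are merged together with their negatives) or the zero block grows by absorbing a non-zero block and its negative, giving $b_0 = a_0 \cup B \cup (-B)$ for some non-zero $B \in a$, in which case $|b_0| \geq |a_0| + 2$. In either case the smaller element's zero block is contained in the larger element's zero block.

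With this in hand the argument is immediate. If $\mathrm{rk}(y) = \mathrm{rk}(x)+1$, an edge between $x$ and $y$ would be a cover $x \precdot y$, forcing $y_0 \supseteq x_0$; combined with $|x_0| = |y_0| = 3$, this gives $x_0 = y_0$, contradicting $k \neq k'$. The case $\mathrm{rk}(x) = \mathrm{rk}(y)+1$ is symmetric and handled by swapping the roles of $x$ and $y$. I do not foresee any genuine obstacle: the argument is a direct consequence of the structural description of $\latDns$ and of covers in $\Pi_n^B$, the only mild subtlety being to notice that the zero blocks of both $x$ and $y$ are forced to have size exactly three.
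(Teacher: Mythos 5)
Your proposal is correct and follows essentially the same route as the paper: both arguments hinge on observing that the hypotheses force the zero blocks of $x$ and $y$ to be $\{j,0,-j\}$ with $j\le u$ and $\{k,0,-k\}$ with $u<k\le s$ respectively, and then using the explicit cover structure of $\Pi_n^B$ to rule out an edge. Your phrasing via monotonicity of the zero block under covers plus the cardinality count is just a slightly more uniform packaging of the paper's "the only possible merge creating $\{k,0,-k\}$ is blocked by the existing zero block" argument.
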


\begin{proof}
    This follows by the incomparability statement of \Cref{prop:description-of-lattice} and the proof of \Cref{prop:arithmeticity-for-Dns}.
\end{proof}

\begin{proof}[Proof of \Cref{thm:EL_Dns}]
    Let $s \in \{0,\dots,n\}$. We know that the labeling from \Cref{def:concrete_el_labeling} is an EL-labeling for $\latBn$ and that $\latDns$ is a subposet of $\latBn$.~By \Cref{lem: subposet_criterion}, it is sufficient to show that for all $x \preceq y$ in $\latDns$ the unique increasing maximal chain in $[x,y] \subseteq \latBn$ is also contained in $\latDns$.
    Let $x,y \in \latDns$ with $x \preceq y$.~If  $[x,y]\subseteq\latDns$ we are done.
    For the other case, let $c$ be a maximal chain in $[x,y] \subseteq \latBn$ that is not fully contained in $\latDns$.
    We will show that $\lambda(c)$ contains the label $(1,1)$ at least twice, hence the chain cannot be increasing. 
    Now there exists an element $z \in \latBn\setminus\latDns$ such that $c = (x,\ldots,z) \cup (z,\ldots,y)$. 
    In particular, there are $x_1,z_1 \in (x,\ldots,z)$ such that $x_1 \in \latDn$, $z_1 \in \latBn\setminus\latDns$ and $x_1 \precdot z_1$, by \Cref{lem:Dns_noegde}. Now using \Cref{lem:out} it follows that $\mathcal{E}_{x_1 z_1}$ is a signed edge, hence $\lambda(x_1,z_1) = (1,1)$.
    Similarly, by \Cref{lem:Dns_noegde} there exist $z_2,y_1 \in (z,\ldots,y)$ such that $z_2 \in \latBn\setminus\latDns$, $y_1 \in \latDn$ and $z_2 \precdot y_1$. Using \Cref{lem:in} we get that $\mathcal{E}_{z_2 y_1}$ is a signed edge, therefore $\lambda(z_2,y_1) = (1,1)$. So the label $(1,1)$ appears at least two times. Consequently, the unique increasing maximal chain in $[x,y]$ is contained in $\latDns$.
\end{proof}

We can now prove the following interesting result:

\begin{lemma}\label{lemma:chains-and-union-additivity}
    Let $\L$ be a lattice, $\C$ a subset of its chains. We can consider elements of $\C$ inside different sublattices of $\L$. We let $\phi$ be a function on $\C$, independent on a sublattice we consider. Then
    $$
        f(\L') = \sum_{\mathcal F \in C,\, \mathcal{F}\subset \L'} \phi(\mathcal F)
    $$
    is union-additive on the sublattices of $\L$.
\end{lemma}
\begin{proof}
    Let $\L'$ be a sublattice of $\L$. If there is a coherent partition of $\L'\subset \L$ into $L_0$, $\dots$, $L_k$, each chain in $\mathcal C$ contained in $\L'$ is either entirely contained in $L_0$ or intersects exactly one $L_i$ for $i\in [k]$ by incomparability. 
\end{proof}
\begin{proof}[Proof of \Cref{prop:chow_arithmeticity_EL}via EL-labelling]
    If we fix a  some labelling $\lambda$ of a lattice $\L$ and $\mathcal F$ is a chain in $\L$,
    \[\phi(\mathcal F) = t^{\text{des}(\lambda_\mathcal{F})} (t+1)^{n-1-2\text{des}(\lambda_\mathcal{F})}, \]
    does not depend on a sublattice of $\L$ in which we consider the chain.
    We use \Cref{lemma:chains-and-union-additivity} with $\L = \L(\B_n)$, $\lambda$ being the EL-labelling above, $\C$ is the set of chains as in \Cref{thm:chow_formula} and $\phi$ as above.
    By Theorem \ref{thm:EL_Dns} and \Cref{thm:chow_formula}, $\sum_{\mathcal F\in \C,F\subset \L(\D_{n,s})} \phi(\mathcal F)$ is the Chow polynomial of $\D_{n,s}$, and by \Cref{lemma:chains-and-union-additivity} this is union-additive on sublattices of $\L(\B_n)$. So, \Cref{prop:arithmeticity-for-Dns} finishes the proof.
\end{proof}

\bigskip
\bigskip
\bigskip

\printbibliography

@article {weylgroupoids,
    AUTHOR = {Cuntz, Michael and Heckenberger, Istv\'an},
     TITLE = {Finite {W}eyl groupoids},
   JOURNAL = {J. Reine Angew. Math.},
  FJOURNAL = {Journal f\"ur die Reine und Angewandte Mathematik. [Crelle's
              Journal]},
    VOLUME = {702},
      YEAR = {2015},
     PAGES = {77--108},
      ISSN = {0075-4102,1435-5345},
   MRCLASS = {17B22 (52C35)},
  MRNUMBER = {3341467},
MRREVIEWER = {Leandro\ Vendramin},
       DOI = {10.1515/crelle-2013-0033},
       URL = {https://doi.org/10.1515/crelle-2013-0033},
}

@article{FMSV24,
    title = {Hilbert–Poincaré series of matroid Chow rings and intersection cohomology},
    journal = {Advances in Mathematics},
    volume = {449},
    year = {2024},
    author = {Luis Ferroni and Jacob P. Matherne and Matthew Stevens and Lorenzo Vecchi}
}

@article{DNG19,
    title = {Shellability of posets of labeled partitions and arrangements deﬁned by               root systems},
    journal = {The Electronic Journal of Combinatorics},
    volume = {26},
    year = {2019},
    author = {Emanuele Delucchi and Noriane Girard and Giovanni Paolini}
}

@misc{MS22,
      title={Inscribable Fans II: Inscribed zonotopes, simplicial arrangements, and reflection groups}, 
      author={Sebastian Manecke and Raman Sanyal},
      year={2022},
      eprint={2203.11062},
      archivePrefix={arXiv},
      primaryClass={math.MG},
      url={https://arxiv.org/abs/2203.11062}, 
}

@article{S07,
    title = {Coxeter cones and their h-vectors},
    journal = {Advances in Mathematics},
    volume = {217},
    number = {5},
    year = {2008},
    author = {John R. Stembridge}
}

@article{S24,
title = {Chow and augmented Chow polynomials as evaluations of Poincaré-extended ab-indices},
journal = {Advances in Mathematics},
volume = {482},
pages = {110618},
year = {2025},
issn = {0001-8708},
doi = {https://doi.org/10.1016/j.aim.2025.110618},
author = {Christian Stump},
}

@article{PRW08,
    title = {Faces of generalized permutohedra},
    journal = {Documenta Mathematica},
    volume = {13},
    year = {2008},
    author = {Alex Postnikov and Victor Reiner and Lauren Williams}
}

@article{OT13,
    author = {Orlik, Peter and Terao, Hiroaki},
    journal = {Springer Science \& Business Media},
    year = {2013},
    title = {Arrangements of Hyperplanes}
}

@article{O2011,
    author = {J. Oxley} ,
    title = {Matroid Theory},
    journal = {Oxford Graduate Text in Mathematics},
    publisher = {Oxford University Press},
    volume = {2nd edition},
    year = {2011}
}

@article{FY04,
   title={Chow rings of toric varieties defined by atomic lattices},
   volume={155},
   number={3},
   journal={Inventiones Mathematicae},
   publisher={Springer Science and Business Media LLC},
   author={Feichtner, Eva Maria and Yuzvinsky, Sergey},
   year={2004}
}

@misc{H24,
      title={The Chow and augmented Chow polynomials of uniform matroids}, 
      author={Elena Hoster},
      year={2024},
      eprint={2410.22329},
      archivePrefix={arXiv},
      primaryClass={math.CO} 
}

@article{A17,
    author = {C. A. Athanasiadis} ,
    title = {Gamma-positivity in combinatorics and geometry},
    journal = {Séminaire Lotharingien de Combinatoire},
    year = {2018},
    volume = {77},
    pages = {Article B77i}
}

@article{AHK18,
    author = {Karim Adiprasito and June Huh and Eric Katz},
    title = {Hodge theory for combinatorial geometries},
    journal = {Ann. of Math.},
    volume = {188.2},
    number = {2},
    year = {2018},
    pages = {381–452}    
}

@article{G05,
   title={Real Root Conjecture Fails for Five- and Higher-Dimensional Spheres},
   volume={34},
   number={2},
   journal={Discrete Camp: Computational Geometry},
   publisher={Springer Science and Business Media LLC},
   author={Gal, Swiatoslaw R.},
   year={2005},
   pages={269–284}
}

@article{C12,
    author= {Michael Cuntz},
    title = {Simplicial Arrangements with up to 27 Lines},
    journal = {Discrete \& Computational Geometry},
    year = {2012},
    volume = {48}
}

@article {C22,
    AUTHOR = {Cuntz, Michael},
     TITLE = {A greedy algorithm to compute arrangements of lines in the
              projective plane},
   JOURNAL = {Discrete Comput. Geom.},
  FJOURNAL = {Discrete \& Computational Geometry. An International Journal
              of Mathematics and Computer Science},
    VOLUME = {68},
      YEAR = {2022},
    NUMBER = {1},
     PAGES = {107--124},
      ISSN = {0179-5376,1432-0444},
   MRCLASS = {20F55 (14N20 52C35)},
  MRNUMBER = {4430282},
MRREVIEWER = {Clement\ Radu\ Popescu},
       DOI = {10.1007/s00454-021-00351-y},
       URL = {https://doi.org/10.1007/s00454-021-00351-y},
}

@article{G09,
    author = {Branko Grünbaum} ,
    title = {A catalogue of simplicial arrangements in the real projective plane},
    journal = {Ars Mathematica Contemporanea},
    year = {2009},
    volume = {2}
}

@article{K64,
    author = {Victor Klee},
    title = {A combinatorial proof of Poincar´e’s duality theorem},
    journal = {Can. J. Math.},
    year = {1964},
    volume = {16},
    pages = {517–531}
}

@misc{BV25,
      title={Chow polynomials of uniform matroids are real-rooted}, 
      author={Petter Brändén and Lorenzo Vecchi},
      year={2025},
      eprint={2501.07364},
      archivePrefix={arXiv},
      primaryClass={math.CO} 
}

@misc{S21,
    title = {Real-rootedness conjectures in matroid theory},
    author = {Matthew Stevens},
    year = {2021},
    howpublished={\url{https://mcs0042.github.io/bachelor.pdf}}
}

@book{S97,
  author    = {Richard P. Stanley},
  title     = {Enumerative Combinatorics, Volume 1},
  year      = {1997},
  publisher = {Cambridge University Press},
  series    = {Cambridge Studies in Advanced Mathematics},
  volume    = {49},
  address   = {Cambridge}
}

@article{B94,
  author       = {Brenti, Francesco},
  title        = {q-Eulerian polynomials arising from Coxeter groups},
  journaltitle = {European Journal of Combinatorics},
  volume       = {15},
  year         = {1994},
  pages        = {417--441},
}

@misc{KSSW25,
      title={Hamiltonian Cycles in Simplicial and Supersolvable Hyperplane Arrangements}, 
      author={Veronika Körber and Tobias Schnieders and Jan Stricker and Jasmin Walizadeh},
      year={2025},
      eprint={2508.14538},
      archivePrefix={arXiv},
      primaryClass={math.CO} 
}

@misc{KSSW25code,
    title={Hamiltonian Cycles in Simplicial Arrangements},
    author={Veronika Körber and Tobias Schnieders and Jan Stricker and Jasmin Walizadeh},
    month = {3},
    year = {2025},
    note = {GitHub},
    howpublished={\url{https://github.com/Tobias271828/Hamilton-Cycles-in-Simplicial-Arrangements}}
}

@misc{DHMPR25code,
    title={Chow polynomials and gamma-vectors of restrictions of reflection arrangements},
    author= {Sebastian Degen and Lisa Henetmayr and Magdaléna Mišinová and Pawel Pielasa and Florian Rieg},
    month = {9},
    year = {2025},
    note = {GitHub},
    howpublished={\url{https://github.com/sdegen95/Chow-polynomials-and-gamma-vectors-of-restrictions-of-reflection-arrangements}}
}

@book{Hum90,
  author    = {James E. Humphreys},
  title     = {Reflection groups and Coxeter groups},
  year      = {1990},
  publisher = {Cambridge University Press},
  series    = {Cambridge Studies in Advanced Mathematics},
  volume    = {29},
  address   = {Cambridge}
}

@article{FS22,
author = {Ferroni, Luis and Schröter, Benjamin},
year = {2022},
month = {08},
pages = {},
title = {Valuative invariants for large classes of matroids},
doi = {10.48550/arXiv.2208.04893}
}

@misc{EPW16,
      title={The Kazhdan-Lusztig polynomial of a matroid}, 
      author={Ben Elias and Nicholas Proudfoot and Max Wakefield},
      year={2016},
      eprint={1412.7408},
      archivePrefix={arXiv},
      primaryClass={math.CO},
      url={https://arxiv.org/abs/1412.7408}, 
}

@article{E20,
title = {Divisors on matroids and their volumes},
journal = {Journal of Combinatorial Theory, Series A},
volume = {169},
pages = {105135},
year = {2020},
issn = {0097-3165},
doi = {https://doi.org/10.1016/j.jcta.2019.105135},
url = {https://www.sciencedirect.com/science/article/pii/S0097316519301165},
author = {Christopher Eur},
}

@article{NS04,
author = {Nyman, Kathryn and Swartz, Ed},
year = {2004},
month = {05},
pages = {},
title = {Inequalities for the h- and flag h-vectors of geometric lattices},
doi = {10.48550/arXiv.math/0405535}
}

@misc{FMV24,
      title={Chow functions for partially ordered sets}, 
      author={Luis Ferroni and Jacob P. Matherne and Lorenzo Vecchi},
      year={2024},
      eprint={2411.04070},
      archivePrefix={arXiv},
      primaryClass={math.CO},
      url={https://arxiv.org/abs/2411.04070}, 
}

@article{Z75,
    author = {Thomas Zaslavky},
    title = {Facing up to arrangements: face-count formulas for partitions of space by hyperplanes},
    journal = {Memoirs of the American Mathematical Society},
    year = {1975}
}
\newpage

\pagestyle{myheadings}
\markright{APPENDIX}
\appendix

\section{Small catalogue of Chow polynomials of $\D_{n,s}$- arrangements}\label{appendixsec: chow_polyn_catalogue}

We provide a small catalogue of Chow polynomials of matroids arising from $\D_{n,s}$ arrangements up to dimension $n=7$.

The Chow polynomials in below \Cref{table: chow_poly_234,table: chow_poly_56,table: chow_poly_7} were computed using the computer algebra system \verb|SageMath|, based on the previously introduced EL-labeling (\Cref{def:concrete_el_labeling}) and the formula from \Cref{thm:chow_formula}. The code used for these computations is available in the GitHub repository \cite{DHMPR25code}.

\begin{table}[h]
\hspace{-28pt}
    \begin{minipage}{.35\linewidth}
        \centering
        \ra{1.2}
        \hspace{50pt}
        \begin{tabular}{@{}ll@{}}
             \hline
             $s$& $H_{\D_{n,s}}(t)$ \\
             \hline
             0& $t+1$\\
             1& $t+1$\\
             2& $t+1$\\
             \hline
        \end{tabular}
    \end{minipage}
    \begin{minipage}{.3\linewidth}
        \centering
        \ra{1.2}
        \begin{tabular}{@{}ll@{}}
             \hline
             $s$& $H_{\D_{n,s}}(t)$ \\ 
             \hline
             0& $t^2+8t+1$\\
             1& $t^2 + 10t +1$\\
             2& $t^2 + 12t +1$\\
             3& $t^2 + 14t + 1$\\
         \hline
    \end{tabular}
    \end{minipage}
    \begin{minipage}{.3\linewidth}
        \centering
        \ra{1.2}
        \begin{tabular}{@{}ll@{}}
        \hline
             $s$& $H_{\D_{n,s}}(t)$ \\
             \hline
             0& $t^3 + 59t^2 + 59t + 1$\\
             1& $t^3 + 69t^2 + 69t +1$\\
             2& $t^3+79t^2+79t +1$\\
             3& $t^3 + 89t^2 + 89t+1$\\
             4& $t^3 + 99t^2 + 99t +1$\\
             \hline
        \end{tabular}
    \end{minipage}
    \caption{Chow polynomials for $n=2,3,4$.}
    \label{table: chow_poly_234}
\end{table}

\begin{table}[h]
    \begin{minipage}{.45\linewidth}
        \centering
        \ra{1.3}
        \begin{tabular}{@{}ll@{}}
        \hline
             $s$& $H_{\D_{n,s}}(t)$ \\ \hline
             0& $t^4 + 382t^3 + 1722t^2 + 382t +1$\\
             1&$t^4 + 430t^3 + 2010t^2 + 430t +1$\\
             2& $t^4 + 478t^3 + 2298t^2 + 478t +1$\\
             3& $t^4 + 526t^3 + 2586t^2 + 526t +1$\\
             4& $t^4 + 574t^3 + 2874t^2 + 574t +1$\\
             5& $t^4 + 622t^3 + 3162t^2 + 622t +1$\\
             \hline
        \end{tabular}
    \end{minipage}
    \begin{minipage}{.45\linewidth}
        \centering
        \ra{1.3}
        \begin{tabular}{@{}ll@{}}
        \hline
             $s$& $H_{\D_{n,s}}(t)$ \\ \hline
             0& $t^5 + 2515t^4 + 35956t^3 +35956t^2 + 2515t +1$\\
             1&$t^5 + 2771t^4 + 40932t^3 +40932t^2 + 2771t +1$\\
             2& $t^5 + 3027t^4 + 45908t^3 +45908t^2 + 3027t +1$\\
             3& $t^5 + 3283t^4 + 50884t^3 +50884t^2 + 3283t +1$\\
             4& $t^5 + 3539t^4 + 55860t^3 +55860t^2 + 3539t +1$\\
             5& $t^5 + 3795t^4 + 60836t^3 +60836t^2 + 3795t +1$\\
             6& $t^5 + 4051t^4 + 65812t^3 +65812t^2 + 4051t +1$\\
             \hline
        \end{tabular}
    \end{minipage}
    \caption{Chow polynomials for $n=5,6$.}
    \label{table: chow_poly_56}
\end{table}

\begin{center}
\begin{table}[h]
    \centering
    \ra{1.3}
    \begin{tabular}{@{}ll@{}}
    \hline
         $s$& $H_{\D_{n,s}}(t)$ \\ \hline
         0& $t^6 + 17824t^5 + 665107t^4 + 1980008t^3 + 665107t^2 + 17824t + 1$\\
         1&$t^6 + 19362t^5 + 742263t^4 + 2229164t^3 + 742263t^2 + 19362t + 1$\\
         2&$t^6 + 20900t^5 + 819419t^4 + 2478320t^3 + 819419t^2 + 20900t + 1$\\
         3&$t^6 + 22438t^5 + 896575t^4 + 2727476t^3 + 896575t^2 + 22438t + 1$\\
         4&$t^6 + 23976t^5 + 973731t^4 + 2976632t^3 + 973731t^2 + 23976t + 1$\\
         5&$t^6 + 25514t^5 + 1050887t^4 + 3225788t^3 + 1050887t^2 + 25514t + 1$\\
         6& $t^6 + 27052t^5 + 1128043t^4 + 3474944t^3 + 1128043t^2 + 27052t + 1$\\
         7&$t^6 + 28590t^5 + 1205199t^4 + 3724100t^3 + 1205199t^2 + 28590t + 1$\\
         \hline
    \end{tabular}
    \caption{Chow polynomials for $n=7$}
    \label{table: chow_poly_7}
\end{table}
\end{center}

\end{document}